
\documentclass[a4paper,oneside,10pt,oldfontcommands,reqno]{amsart}
\usepackage[a4paper, total={426pt, 674pt}]{geometry}

\newcommand{\langue}{anglais}	

\usepackage{ifthen}
\ifthenelse{\equal{\langue}{francais}}{
	\usepackage[french]{babel}}
	{\ifthenelse{\equal{\langue}{anglais}}{\usepackage[english]{babel}}{}}
\usepackage[T1]{fontenc}
\usepackage[utf8]{inputenc}
\usepackage[babel,german=guillemets]{csquotes}
\usepackage{eso-pic}

\usepackage[backend=biber, style=alphabetic, sorting=nyt, giveninits=true, maxnames=99, maxcitenames=99, doi=false, isbn=false, url=false, eprint=false]{biblatex} 
\DeclareNameAlias{author}{last-first}
\addbibresource{biblio.bib} 

\usepackage{lmodern} 
\usepackage{enumitem}
\usepackage{ifthen}
\ifthenelse{\equal{\langue}{francais}}{
	\frenchbsetup{StandardLists=true}}
	{}

\usepackage{graphicx} 
\usepackage{subcaption}


\usepackage{amsmath}
\usepackage{amssymb}
\usepackage{amsthm}
\usepackage{amsfonts}
\usepackage{bbold}
\usepackage{mathtools}
\usepackage{tikz-cd}
\usepackage{hyperref}
\usepackage{multirow}
\usepackage[normalem]{ulem}
\usepackage{cases}

\ifthenelse{\equal{\langue}{francais}}{
	\newcommand{\theoremenom}{Théorème}
	\newcommand{\propositionnom}{Proposition}
	\newcommand{\lemmenom}{Lemme}
	\newcommand{\corollairenom}{Corollaire}
	\newcommand{\definitionnom}{Définition}
	\newcommand{\remarquenom}{Remarque}
	\newcommand{\exemplenom}{Exemple}
	\newcommand{\conjecturenom}{Conjecture}
}{\ifthenelse{\equal{\langue}{anglais}}{
	\newcommand{\theoremenom}{Theorem}
	\newcommand{\propositionnom}{Proposition}
	\newcommand{\lemmenom}{Lemma}
	\newcommand{\corollairenom}{Corollary}
	\newcommand{\definitionnom}{Definition}
	\newcommand{\remarquenom}{Remark}
	\newcommand{\exemplenom}{Example}
	\newcommand{\conjecturenom}{Conjecture}
}{}}
	
\newtheorem{theoreme}{\theoremenom}[section]
\newtheorem{proposition}[theoreme]{\propositionnom}
\newtheorem{lemme}[theoreme]{\lemmenom}
\newtheorem{corollaire}[theoreme]{\corollairenom}

\newtheorem{remarque}[theoreme]{\remarquenom}

\usepackage{thmtools}
\usepackage{thm-restate}

\makeatletter
\def\cleartheorem#1{%
    \expandafter\let\csname#1\endcsname\relax
    \expandafter\let\csname c@#1\endcsname\relax
}
\makeatother
\newcommand{\compteurThm}{1}
\newcounter{annexe}

\newtheorem{thmx}{Theorem}

\newcommand{\C}{\mathbb{C}}
\newcommand{\R}{\mathbb{R}}
\newcommand{\N}{\mathbb{N}}

\newcommand{\dint}{\ds\int}
\newcommand{\ds}{\displaystyle}
\newcommand{\dsum}{\ds\sum}
\newcommand{\eqskip}{ \vspace*{2mm}\\ }
\newcommand{\fr}[2]{\frac{\ds #1}{\ds #2}}

\renewcommand{\(}{\left(}

\newcommand{\ccc}{\color{red}}

\newcommand{\op}[1]{S_{#1}}
\newcommand{\mygeq}{\trianglerighteq}
\newcommand{\myleq}{\trianglelefteq}

\newenvironment{acknowledgements}{%
  \begin{abstract}
}{%
  \end{abstract}
}

\setlength{\parskip}{1em}


%
%



\begin{document}

\pagestyle{empty} 


\title{Extremal problems for clamped plates under tension}

\author{Pedro Freitas}\email{pedrodefreitas@tecnico.ulisboa.pt}

\author{Roméo Leylekian}\email{romeo.leylekian@tecnico.ulisboa.pt}

\address{Grupo de F\'{i}sica Matem\'{a}tica, Instituto Superior Técnico, Universidade de Lisboa, Av. Rovisco Pais, 1049-001 Lisboa, Portugal}
\maketitle


\begin{abstract}
We address extremum problems for spectral quantities associated with operators of the form $\Delta^2-\tau\Delta$ with Dirichlet boundary conditions, for non-negative values of $\tau$. The focus is on two shape optimisation problems: minimising the first eigenvalue; and maximising the torsional rigidity, both under volume constraint. We establish, on the one hand, a Szeg\H{o}-type inequality, that is, we show that among all domains
having a first eigenfunction of fixed sign the ball minimises the corresponding first eigenvalue; on the other hand a Saint--Venant-type inequality,
namely, a sharp upper bound on the torsional rigidity, again achieved by the ball. We further present other properties related to these operators and express the optimality 
condition associated with the minimisation of the first eigenvalue.

\end{abstract}



\pagestyle{plain} 


\section{Introduction}

This paper is devoted to the study of extremal shapes associated with a family of operators used as a model for solid plates.
These operators may be written as $\op{\tau} = \Delta^2-\tau\Delta,$ where $\tau$ is a real number, and they are complemented
with standard Dirichlet boundary conditions. Operators such as $\op{\tau}$ are part of a class of what might be called 
\emph{mixed-order operators}, as they involve differential operators of different orders. As a consequence, they are not invariant
under scaling, for instance. Furthermore, the behaviour of $\op{\tau}$ depends strongly on the sign of the parameter $\tau$, both from a mathematical and a physical perspective. Throughout, we will focus on the case where $\tau$ is non-negative, which corresponds to
a model for clamped plates subject to a uniform lateral tension proportional to $\tau$ \cite{weinstein-chien,payne,kawohl-levine-velte}.

We will be interested in different spectral quantities associated with the above operators, and especially in their dependence on the domain and on the real parameter $\tau$. First, we will consider the eigenvalues of $S_\tau$ defined, on an open subset $\Omega$ in $d-$dimensional Euclidean space $\R^d$, as the (real) numbers $\Gamma$ for which the problem
\begin{equation}\label{eq:pb vp}
\left\{
\begin{array}{rcll}
\Delta^2 u-\tau\Delta u & = & \Gamma u & \text{in }\Omega,\\
u & = & 0 &\text{on }\partial\Omega,\\
\partial_n u & = & 0 &\text{on }\partial\Omega,
\end{array}
\right.
\end{equation}
admits a nontrivial solution $u:\Omega\to\R$. We recall that $\Delta$ and $\Delta^2$ denote the Laplacian and the bilaplacian,
respectively, with the latter being defined by applying twice the former. When $\Omega$ has finite volume there exists a non-decreasing sequence of eigenvalues
going to infinity, each with finite multiplicity. We will focus on the first of those eigenvalues that we will denote by $\Gamma(\Omega,\tau)$, in order
to emphasise its dependence on both the domain $\Omega$ and the parameter $\tau$. Our goal will be to identify the domains $\Omega$ for which
$\Gamma(\Omega,\tau)$ is minimal, among domains of equal volume. This corresponds to solving the following minimisation problem:
\begin{equation}\label{eq:pb min}
\min\{\Gamma(\Omega,\tau):\Omega\text{ open subset of }\R^d\text{, }|\Omega|=c\},
\end{equation}
where $|\Omega|$ is the volume of $\Omega$ and $c$ is an arbitrary (positive) parameter. It is expected that the Euclidean ball is a solution to this problem or, in other words, that the first eigenvalue of~\eqref{eq:pb vp} satisfies a
Faber--Krahn-type inequality. Our first main result consists in a partial answer to this question. More precisely, we will prove that a Faber--Krahn inequality holds for all positive values of $\tau$ for domains whose first eigenfunction does not change sign, thus extending the result obtained by
Szeg\H{o} for vanishing $\tau$~\cite{szego}.

\begin{thmx}[Szeg\H{o}-type result]\label{thmsz}
Let $\tau\geq0$ and $\Omega\subseteq\R^d$ be an open set of finite volume over which an eigenfunction corresponding to the first eigenvalue of $S_\tau$
does not change sign. Then,
$$
\Gamma(\Omega,\tau)\geq\Gamma(B,\tau),
$$
where $B$ is a ball of same volume as $\Omega$.
\end{thmx}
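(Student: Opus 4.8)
The plan is to reduce the fourth-order eigenvalue problem to a comparison principle for the Poisson equation, in the spirit of Szeg\H{o}'s original argument for $\tau=0$ combined with Talenti-type symmetrisation, with the lower-order term handled by an additional estimate. Let $u$ be a first eigenfunction of $S_\tau$ on $\Omega$; by hypothesis it has constant sign, so I normalise it with $u\geq 0$ and $\|u\|_{L^2(\Omega)}=1$. Then $u\in H^2_0(\Omega)$, and setting $f:=-\Delta u\in L^2(\Omega)$ the Rayleigh quotient gives
\[
\Gamma(\Omega,\tau)=\int_\Omega f^2\,\di x+\tau\int_\Omega|\nabla u|^2\,\di x,\qquad \int_\Omega|\nabla u|^2\,\di x=\int_\Omega fu\,\di x .
\]
The elementary but decisive remark, coming from $\partial_n u=0$ on $\partial\Omega$, is that $\int_\Omega f\,\di x=-\int_\Omega\Delta u\,\di x=0$ (a continuous functional on $H^2_0(\Omega)$ since $|\Omega|<\infty$, hence zero because it vanishes on $C_c^\infty(\Omega)$).

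Next I would construct a radial competitor on the ball $B$ with $|B|=|\Omega|$ out of $f$. Decompose $f=f^+-f^-$ and, accordingly, $u=u^{(1)}-u^{(2)}$ with $-\Delta u^{(i)}=f^{(i)}$ in $\Omega$ and $u^{(i)}=0$ on $\partial\Omega$; by the maximum principle $u^{(i)}\geq0$, and since $u\geq0$ one has the ordering $u^{(1)}\geq u^{(2)}\geq0$ --- this is exactly where the sign hypothesis is used. Let $\tilde f$ be the rearrangement of $f$ on $B$ obtained by placing the decreasing rearrangement of $f^+$ on a central ball and the rearrangement of $-f^-$ (with increasing modulus) on the complementary annulus; then $\tilde f$ is equimeasurable with $f$, so $\int_B\tilde f\,\di x=0$ and $\|\tilde f\|_{L^2(B)}=\|f\|_{L^2(\Omega)}$. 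Define $\tilde u$ by $-\Delta\tilde u=\tilde f$ in $B$, $\tilde u=0$ on $\partial B$. Elliptic regularity gives $\tilde u\in H^2(B)$; $\tilde u$ is radial, and $\int_{\partial B}\partial_n\tilde u\,\di S=\int_B\Delta\tilde u\,\di x=-\int_B\tilde f\,\di x=0$ forces $\partial_n\tilde u\equiv0$ on $\partial B$, so in fact $\tilde u\in H^2_0(B)$ and is admissible for $\Gamma(B,\tau)$. Since $\|\Delta\tilde u\|_{L^2(B)}^2=\|\tilde f\|_{L^2(B)}^2=\|f\|_{L^2(\Omega)}^2$ and $\|\nabla\tilde u\|_{L^2(B)}^2=\int_B\tilde f\tilde u\,\di x$,
\[
\Gamma(B,\tau)\ \leq\ \frac{\|f\|_{L^2(\Omega)}^2+\tau\int_B\tilde f\tilde u\,\di x}{\|\tilde u\|_{L^2(B)}^2}.
\]
Therefore it suffices to establish (i) $\|\tilde u\|_{L^2(B)}\geq\|u\|_{L^2(\Omega)}=1$ and (ii) $\int_B\tilde f\tilde u\,\di x\leq\int_\Omega fu\,\di x$: granting these, the right-hand side is at most $\|f\|_{L^2(\Omega)}^2+\tau\int_\Omega fu\,\di x=\Gamma(\Omega,\tau)$, since this quantity is non-negative and $\|\tilde u\|_{L^2(B)}\geq1$.

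Estimates (i) and (ii) carry the real content, and I expect this to be the main obstacle. For (i) with $\tau=0$ this is precisely Szeg\H{o}'s comparison; I would obtain it from Talenti's pointwise comparison theorem applied to $-\Delta u^{(1)}=f^+$ --- which yields $\tilde u^{(1)}\geq (u^{(1)})^\ast$ on $B$, where $\tilde u^{(1)}$ solves the Poisson problem on $B$ with the rearranged data --- together with the companion comparison for a source supported in an outer annulus applied to $u^{(2)}$, the two being combined using the ordering $u^{(1)}\geq u^{(2)}$ and its rearranged consequence $(u^{(1)})^\ast\geq(u^{(2)})^\ast$. The genuinely new ingredient for $\tau>0$ is (ii), that is $\int_B\tilde f\tilde u\,\di x=\|\nabla\tilde u\|^2\leq\|\nabla u\|^2=\int_\Omega fu\,\di x$, which I would derive from the same pointwise comparisons together with the Hardy--Littlewood rearrangement inequality; the delicate point --- as in the work of Nadirashvili and of Ashbaugh--Benguria on the clamped plate --- is the control of the cross terms $\int\nabla u^{(1)}\!\cdot\nabla u^{(2)}$ and $\int u^{(1)}u^{(2)}$ under the rearrangement, a difficulty that the ordering renders tractable here in every dimension.

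Finally I would record the consistency and sharpness: on the ball the first eigenfunction of $S_\tau$ has constant sign --- classical for $\tau=0$, and for $\tau\geq0$ a consequence of the explicit ordinary differential equation analysis on $B$ together with positivity of the relevant Green kernel --- so $B$ itself lies in the admissible class, the displayed inequality is an equality when $\Omega=B$, and hence $\Gamma(B,\tau)$ is exactly the minimum of $\Gamma(\cdot,\tau)$ over all domains with a constant-sign first eigenfunction, which is the assertion.
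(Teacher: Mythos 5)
Your reduction to the two estimates (i) and (ii) is where the argument breaks, and the difficulty with (ii) is not that it is merely hard to prove: it points in the wrong direction. For the competitor $\tilde u$ obtained by solving $-\Delta\tilde u=\tilde f$ on $B$ with $\tilde f$ the signed rearrangement of $f=-\Delta u$, the Talenti machinery produces \emph{lower} bounds on $\tilde u$ (namely $u^*\leq\tilde u$, which is your step (i)), and combined with the Hardy--Littlewood inequality this gives, at least in the model case $f\geq0$,
$$
\int_\Omega|\nabla u|^2=\int_\Omega fu\;\leq\;\int_B f^*u^*\;\leq\;\int_B \tilde f\,\tilde u=\int_B|\nabla\tilde u|^2 ,
$$
i.e.\ symmetrising the Laplacian \emph{increases} the Dirichlet energy of the solution; this is point (v) of Talenti's Theorem 1, and the signed construction you describe inherits the same behaviour. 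So the inequality $\int_B|\nabla\tilde u|^2\leq\int_\Omega|\nabla u|^2$ that you need to control the term $\tau\int|\nabla\cdot|^2$ is exactly the one that fails, and no rearrangement of $-\Delta u$ is known to decrease the gradient norm of the solution while simultaneously increasing its $L^2$ norm. This is precisely the obstruction, identified in the introduction of the paper, which separates the case $\tau>0$ from Szeg\H{o}'s original $\tau=0$ argument (where no gradient term appears) and from the $\tau<0$ results (where the growth of $\int|\nabla\tilde u|^2$ has the favourable sign).

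The paper's proof avoids comparing gradients altogether: writing $\Delta^2-\tau\Delta=(\Delta-\tau/2)^2-\tau^2/4$, one symmetrises $g:=(-\Delta+\tau/2)u$ rather than $-\Delta u$, and defines $v$ on $\Omega^*$ by $(-\Delta+\tau/2)v=g^*$. The numerator $\int_{\Omega^*}[(\Delta-\tau/2)v]^2=\int_{\Omega^*}(g^*)^2=\int_\Omega g^2$ is then preserved exactly, and only the $L^2$ norms of $v$ and $u$ need to be compared, which Theorem~\ref{thm:talenti} provides since $u\geq0$. The price is that $\int_\Omega g=\tfrac{\tau}{2}\int_\Omega u\neq0$ in general, so, unlike your $\tilde u$, the competitor $v$ only satisfies the incomplete boundary condition $v\mygeq0$, $\partial_nv\geq0$, and a separate variational argument (Proposition~\ref{prop:pb 1 boule}) is required to show that minimisers of the resulting one-ball problem lie in $H_0^2(\Omega^*)$. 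Your observation that $\int_\Omega\Delta u=0$ forces $\partial_n\tilde u\equiv0$ is correct and explains why the $\tau=0$ case is cleaner, but in your scheme it is bought at the cost of the untenable gradient estimate (ii).
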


As far as we are aware, this result is the first of its type for $S_\tau$ when $\tau$ is positive. However, as
usual with fourth-order operators, a major issue is the lack of positivity preservation, which makes the sign assumption on the first eigenfunction
very restrictive. We point out that on the unit ball, De Coster, Nicaise and Troestler have obtained a complete description of the eigenvalues and eigenfunctions of $S_\tau$ in \cite{coster-nicaise-troestler15,coster-nicaise-troestler17}. In particular, they proved that the first eigenfunctions are of fixed sign as long as the first eigenvalue is greater than $-j_{\nu,1}^2j_{\nu,2}^2$, whereas they are sign changing when the first eigenvalue is strictly below this threshold. Here and after, $\nu=d/2-1$ and $(j_{\nu,i})_{i\in\N^*}$ denote the positive zeros of the Bessel function of first kind and of order $\nu$. Since the first eigenvalue is positive when $\tau$ is non-negative, this shows that balls do satisfy the assumption of Theorem~\ref{thmsz}. Anyway, just like in Szeg\H{o}'s original result, we leave open the case of domains for which the first eigenfunction changes sign.

The other problem that we address in this paper is related to the torsion function, defined as the solution $w:\Omega\to\R$ of the following equation:
\begin{equation}\label{torsion-eq}
\left\{
\begin{array}{rcll}
\Delta^2 w-\tau\Delta w & = & 1 &\text{in }\Omega,\\
w & = & 0 &\text{on }\partial \Omega,\\
\partial_n w & = & 0 &\text{on }\partial \Omega.
\end{array}
\right.
\end{equation}
In this case the quantity under study is the torsional rigidity, also called sometimes compliance or mean deflection, and defined by
\begin{equation*}
 T(\Omega,\tau) = \dint_{\Omega} w.
\end{equation*}
In general, the torsion function may also change sign, which leads to similar issues
as above (see however Theorem~\ref{thm:torsion boule} for the case of balls). Despite this difficulty, here we prove the full isoperimetric
inequality for the torsional rigidity, valid in any dimension. This generalises the well-known Saint--Venant inequality for the
Laplacian, proved by P\'olya~\cite{saint-venant,polya}, and its counterpart for the bilaplacian, recently obtained by Ashbaugh, Bucur, Laugesen, and Leylekian~\cite{ashbaugh-bucur-laugesen-leylekian}.
In fact, our result may be seen as a bridge, making the link between those two results.

\begin{thmx}[Saint--Venant-type inequality]\label{thmsv}
Let $\Omega$ be an open subset of $\R^d$ of finite volume. Then, for any non-negative value of $\tau$ we have
$$
T(\Omega,\tau)\leq T(B,\tau),
$$
where $B$ is a ball of same volume as $\Omega$.
\end{thmx}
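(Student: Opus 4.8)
The plan is to combine a variational characterisation of $T(\Omega,\tau)$ with a symmetrisation argument, following the strategy used for the bilaplacian in \cite{ashbaugh-bucur-laugesen-leylekian}, but exploiting the extra zeroth-order structure coming from the $-\tau\Delta$ term. First I would record the variational formula
\[
T(\Omega,\tau)=\max_{\substack{v\in H^2_0(\Omega)\\ v\neq 0}}\frac{\left(\int_\Omega v\right)^2}{\int_\Omega\left(|\Delta v|^2+\tau|\nabla v|^2\right)},
\]
which follows by testing \eqref{torsion-eq} against $w$ and against arbitrary $v\in H^2_0(\Omega)$ together with Cauchy--Schwarz; the maximiser is exactly the torsion function $w$, and $T(\Omega,\tau)=\int_\Omega w=\int_\Omega(|\Delta w|^2+\tau|\nabla w|^2)$. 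Thus it suffices to produce, from the torsion function $w$ on $\Omega$, a competitor $v$ on the ball $B$ of the same volume such that $\int_B v\geq\int_\Omega w$ while $\int_B(|\Delta v|^2+\tau|\nabla v|^2)\leq\int_\Omega(|\Delta w|^2+\tau|\nabla w|^2)$.

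The natural candidate is a suitable rearrangement of $w$. The difficulty, exactly as in the bilaplacian case, is that $w$ may change sign, so neither the Schwarz symmetrisation of $w$ nor that of $|w|$ is directly admissible in $H^2_0$ while controlling $\int|\Delta\cdot|^2$. The key idea I would use is the one from \cite{ashbaugh-bucur-laugesen-leylekian}: introduce the auxiliary function $f:=-\Delta w$, which solves $-\Delta f+\tau(-\Delta w)=\dots$ — more precisely, from \eqref{torsion-eq} one has $\Delta^2 w-\tau\Delta w=1$, i.e. $-\Delta f-\tau f = -1$ where I instead set $f$ so that the first-order elliptic problem is clean. Concretely, write the system
\[
-\Delta w = f \quad\text{in }\Omega,\qquad -\Delta f+\tau f = 1\quad\text{in }\Omega,
\]
with $w\in H^2_0(\Omega)$ (so $w=\partial_n w=0$ on $\partial\Omega$) and $f\in H^1(\Omega)$ with no imposed boundary condition on $f$. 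Then $T(\Omega,\tau)=\int_\Omega w=\int_\Omega f\,w$ can be rewritten, using the second equation, in terms of $f$ alone: $\int_\Omega w = \int_\Omega(-\Delta f+\tau f)w = \int_\Omega(\nabla f\cdot\nabla w+\tau f w)=\int_\Omega(|\nabla w|^2+\tau\,|\,\cdot\,|)$... I would instead pass to the clean identity $\int_\Omega w=\int_\Omega f^2$ obtained by testing $-\Delta f+\tau f=1$ against... no: testing against $f$ gives $\int|\nabla f|^2+\tau\int f^2=\int f$, and testing $-\Delta w=f$ against $w$ gives $\int|\nabla w|^2=\int fw$. Combining with a two-function symmetrisation — Schwarz-symmetrise the pair $(w,f)$ simultaneously — one transports the problem to the ball. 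The functional to be controlled is $\int_\Omega(|\nabla f|^2+\tau f^2)$ from below by $\int f$ and the Dirichlet integral of $w$; under symmetrisation $\int f$ is preserved (taking $f\geq 0$, which holds by the maximum principle for $-\Delta+\tau$ since the right side is $1>0$), while the Dirichlet energies decrease by Pólya--Szegő. The sign constraint $w=\partial_n w=0$ on $\partial\Omega$ is handled as in the bilaplacian Saint--Venant proof by rearranging $f$ first, solving $-\Delta w^\star=f^\star$ on $B$, and checking $w^\star\in H^2_0(B)$ automatically.

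I expect the \textbf{main obstacle} to be exactly the same one that makes the bilaplacian case delicate: controlling $\int|\Delta w|^2$ (equivalently $\int f^2$) after symmetrisation, since the rearranged function $w^\star$ solving $-\Delta w^\star=f^\star$ on $B$ is \emph{not} the rearrangement of $w$, and one must verify the chain of inequalities
\[
\int_B w^\star \;\geq\; \int_\Omega w,\qquad \int_B\left(|\Delta w^\star|^2+\tau|\nabla w^\star|^2\right)\;\leq\;\int_\Omega\left(|\Delta w|^2+\tau|\nabla w|^2\right),
\]
so that the variational formula yields $T(B,\tau)\geq$ (value at $w^\star$) $\geq T(\Omega,\tau)$. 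The first inequality follows from Talenti-type comparison ($\int_B w^\star=\int_B f^\star\cdot(\text{something})$, using that solving the Poisson problem on the ball with the symmetrised data increases the solution pointwise after rearrangement), and the second from the fact that $f^\star$ has smaller Dirichlet energy than $f$ together with $\int f^\star{}^2=\int f^2$ wait — that last equality is the crucial point: $L^2$ norms are invariant under equimeasurable rearrangement, so $\int_B(f^\star)^2=\int_\Omega f^2$ exactly, and $\int_B|\nabla f^\star|^2\leq\int_\Omega|\nabla f|^2$, hence $\int_B(|\nabla f^\star|^2+\tau(f^\star)^2)\leq\int_\Omega(|\nabla f|^2+\tau f^2)=\int_\Omega f\geq$... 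The role of $\tau\geq 0$ is precisely that it enters with a favourable sign in this last estimate, so the argument degrades continuously to the known $\tau=0$ (bilaplacian) and formally to the "$\tau=\infty$" (Laplacian) endpoints, which is the "bridge" remarked after the statement. Equality analysis: tracing the Pólya--Szegő equality cases forces $\Omega$ to be a ball.
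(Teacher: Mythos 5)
Your variational formula for $T(\Omega,\tau)$ is correct (an equivalent form appears in the paper's preliminaries), but the symmetrisation step at the heart of your argument fails, and it fails precisely where the difficulty of the positive-tension case lies. You rearrange $f=-\Delta w$ and solve $-\Delta w^\star=f^*$ on the ball, and you then need $\tau\int_B|\nabla w^\star|^2\le\tau\int_\Omega|\nabla w|^2$. But the classical Talenti procedure goes the \emph{other} way: the competitor obtained by symmetrising the Laplacian of a test function has \emph{larger} gradient $L^2$-norm than the original (point \textsc{v} of Talenti's theorem). This is exactly why this scheme works for $\tau\le 0$, where the gradient term enters the energy with a non-positive coefficient, and breaks down for $\tau>0$; your remark that ``$\tau\ge0$ enters with a favourable sign'' has the sign backwards. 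The way around this --- and the actual new idea of the proof --- is to factorise $\Delta^2-\tau\Delta=(\Delta-\tau/2)^2-\tau^2/4$ and to rearrange $(-\Delta+\tau/2)w$ rather than $-\Delta w$, invoking the version of Talenti's comparison principle with a zero-order term (Theorem~\ref{thm:talenti}); the quantity $\int\left(\Delta v-\tfrac{\tau}{2}v\right)^2$ is then controlled directly and no gradient comparison is ever needed.

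Second, your treatment of the sign issues does not hold up. Since $w\in H_0^2(\Omega)$, the function $f=-\Delta w$ has zero mean over $\Omega$, so it changes sign whenever it is not identically zero; in particular it cannot be shown non-negative ``by the maximum principle for $-\Delta+\tau$'', as there is no boundary condition on $f$ to which a maximum principle could be applied. Likewise, Talenti's comparison as used here requires the \emph{solution} of the second-order problem to be non-negative, so when $w$ changes sign the inequality $\int_B w^\star\ge\int_\Omega w$ does not follow from a one-ball rearrangement of the signed datum, and $w^\star\in H_0^2(B)$ is not ``automatic''. This is why the paper (following the bilaplacian case in \cite{ashbaugh-bucur-laugesen-leylekian}) splits $w$ into its nodal parts, applies a hybrid symmetrisation --- $(-\Delta+\tau/2)W_+$ on $\Omega_+$ and $-\Delta W_-$ on $\Omega_-$ --- and is then forced to analyse a two-ball problem with an incomplete coupling condition, complete that condition (Propositions~\ref{prop:v'>0} and~\ref{prop:v'<0}), and finally show by explicit Bessel-function computations that the completed two-ball energy is monotone in the radius (Proposition~\ref{prop:pb 2 boules torsion}). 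None of this machinery is optional: the one-ball shortcut you propose does not close.
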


\begin{remarque}
The case of equality in Theorems~\ref{thmsz} and~\ref{thmsv} involves the notion of capacity associated to the norm $H^2(\R^d)$, or $H^2$-capacity (we refer to \cite[Section 3.8]{henrot-pierre} for an introduction, and to \cite{adams-hedberg} for a comprehensive presentation of higher order capacities). Indeed, it is known that removing a set of zero $H^2$-capacity from a domain $\Omega$ does not change either the first eigenvalue or the torsional rigidity over $\Omega$ (for any $\tau\in\R$). As a result, it is expected that equality holds in the above theorems if and only if $\Omega$ is a ball up to a set of zero $H^2$-capacity. Usually, this claim may be proved provided that the equality case in Talenti's comparison principle is clarified (see \cite{ashbaugh-bucur-laugesen-leylekian}). Since the version of Talenti's comparison principle that is used here (Theorem~\ref{thm:talenti}) does not contain any statement on the case of equality, we leave this question open for Theorem~\ref{thmsz} and Theorem~\ref{thmsv}.
\end{remarque}

\subsection*{Motivations and state-of-the-art}

Before going further, let us explain in more detail our interest in this class of mixed-order operators. These operators are of particular significance 
insofar as they are one of the simplest ways to interpolate between the Laplacian and the bilaplacian. Indeed, while $\op{0}=\Delta^{2}$ together
with Dirichlet boundary conditions, $\op{\tau}$ will converge (at least formally) to the Laplacian, also with Dirichlet boundary
conditions, when $\tau$ goes to $+\infty$.

The relevant point here is that the Laplacian and the bilaplacian exhibit very different features. In particular, we mention the preservation of positivity 
and the existence of a maximum principle, two properties that are satisfied by the former operator, but not the latter in general. Moreover, even  when a property holds for both operators, the corresponding proofs are, in general, quite different. One striking example is the 
Faber--Krahn inequality, stating that the ball is a minimser for the first eigenvalue of the Dirichlet Laplacian subject to a volume 
restriction~\cite{faber,krahn}. While this is known to hold in  any dimension, the analogous problem for the bilaplacian has been solved only in dimension
two by Nadirashvili and in dimension three by Ashbaugh and Benguria~\cite{nadirashvili,ashbaugh-benguria}. In higher dimensions, only partial results are available, such as Szeg\H{o}'s 
result~\cite{szego} (see also~\cite{ashbaugh-laugesen,leylekian1,leylekian2}). Consequently, it is of interest to investigate intermediate situations.

In this respect, operators of mixed-order such as $\op{\tau}$ constitute an appropriate avenue, with the case of a non-negative
tension parameter $\tau$ being of particular significance, for it gives rise to intermediate operators between the Laplacian and the bilaplacian. Furthermore,
it might be expected that, as $\tau$ becomes larger, the behaviour of $\op{\tau}$ approaches that of the Laplacian \cite{kawohl-levine-velte,buoso-kennedy}. To some extent, this is indeed what
happens for classical properties such as the maximum principle. Indeed, for a given domain and a given positive function $f$, the solution of the equation $\Delta^2u-
\tau\Delta u=f$, with Dirichlet boundary conditions, is eventually positive when $\tau$ goes to infinity, with a threshold depending only on the domain in dimensions $2$ and $3$~\cite{cassani-tarsia}, but depending on the domain and also possibly on the function $f$ in higher dimensions~\cite{eichmann-schatzle}. Let us also point out that for radial functions on the unit ball, positivity preservation holds for all values of $\tau\geq0$, as shown in \cite{laurençot-walker}, and it is actually possible to extend this result to $\tau> -j_{\nu}^{2}$ using similar arguments.

Suprisingly, this expected improvement as $\tau$ increases has not manifested itself so far when it comes to the determination of optimal shapes for spectral quantities. Instead, the only known results are actually for negative values of $\tau$. Physically speaking, this range
corresponds to a situation where the plate is subject to a lateral compression, rather than tension. In this setting, Ashbaugh, Benguria and Mahadevan have developed some work in this direction. As was announced in~\cite{ashbaugh-benguria-mahadevan1,ashbaugh-benguria-mahadevan2}, they obtained that the first eigenvalue under a volume constraint is minimised by the ball, for negative $\tau$ close enough to $0$, in dimensions $2$ and $3$. Similarly, in the case of Saint--Venant-type results for the torsion function $T(\Omega,\tau)$ defined above, the only known results are in the recent paper~\cite{ashbaugh-bucur-laugesen-leylekian} where, in particular, the torsional rigidity is proved to be maximised by the ball in dimension $2$, again for negative values of the  parameter $\tau$ close enough to $0$, and in any dimension for vanishing $\tau$ (see
also~\cite[Theorem 5.18]{leylekian-these}).

Overall, all the known results regarding shape optimisation have been, so far, obtained for non-positive values of the parameter $\tau$. The reason comes from the fact that
they all rely on a symmetrisation procedure introduced by Talenti \cite{talenti76}, allowing to build from a given function a radially symmetric competitor with improved
properties. One of these properties is the fact that the gradient of the competitor has greater $L^2$ norm than that of the initial function.
Somewhat paradoxically, this works appropriately for the variational characterization of the quantities involved for non-positive values of $\tau$,
but not for positive values of this parameter. Indeed, when $\tau$ is non-positive, the strategy initiated in~\cite{talenti81} and developed
in~\cite{nadirashvili,ashbaugh-benguria,ashbaugh-laugesen} can be implemented to give rise to a \emph{two-ball} problem, whose solution yields the results 
mentioned above. However, when $\tau$ is positive the growth of the gradient norm is no longer compatible with the variational characterization, and a new 
symmetrisation procedure has to be developed. Let us however mention the works of Chasman~\cite{chasman-these,chasman11,chasman16} on the free plate under 
tension. Yet for those free boundary conditions, the shape optimisation problem associated with the first nontrivial eigenvalue is a maximisation problem, 
hence the setting and the techniques are completely different. Finally, a Steklov type problem related to plates under tension was considered by Buoso and Provenzano in \cite{buoso-provenzano}, and a sharp quantitative upper bound was obtained on the first nontrivial eigenvalue.

\subsection*{Sketch of the proofs}

Without getting into the details, let us give a first insight into our proofs of Theorem \ref{thmsz} and Theorem \ref{thmsv}.

\subsubsection*{Szeg\H{o}-type result}

The first eigenvalue of $S_{\tau}$ admits the variational characterization (see Section \ref{sec:preliminaires})
$$
\Gamma(\Omega,\tau)=\min_{\substack{u\in H_0^2(\Omega),\\u\neq0}}\fr{\dint_\Omega(\Delta u)^2+\tau\int_\Omega|\nabla u|^2}{\dint_\Omega u^2}.
$$
This shows that, if we want to decrease $\Gamma(\Omega,\tau)$, we should not increase the norm of the gradient. Therefore the traditional idea of Talenti
of symmetrising the Laplacian of a test function is not appropriate here (see point {\sc v} of \cite[Theorem~1]{talenti76}). To circumvent this issue, our main idea is to factorise the mixed-order operator by remarking that
$$
\Delta^2-\tau\Delta u=\Gamma u\quad\Leftrightarrow\quad \left(\Delta -\frac{\tau}{2}\right)^{2}u=\left(\Gamma+\frac{\tau^2}{4}\right)u.
$$
Therefore, up to a translation of the spectrum, the operator $S_{\tau}$ is equivalent to the operator $(\Delta-\tau/2)^2$, whose first eigenvalue
admits the variational characterization
$$
\min_{\substack{u\in H_0^2(\Omega),\\u\neq0}}\fr{\dint_\Omega\left(\Delta u-\frac{\tau}{2}u\right)^2}{\dint_\Omega u^2}.
$$
Consequently, one observes that, instead of symmetrising the Laplacian of a test function $u$, it makes sense to symmetrise $\Delta u-\tau/2 u$, and to define the
competitor $v\in H_0^1$ such that
$$
-\Delta v+\frac{\tau}{2} v =\left(-\Delta u+\frac{\tau}{2} u\right)^*,
$$
where $^*$ denotes the standard Schwarz symmetrisation for signed functions~\cite{kesavan}. This operation does not increase the previous quotient, at least when $u$ is of fixed sign (see Theorem \ref{thm:talenti}). However, a major issue concerns the boundary conditions for $v$. Indeed, as we shall see, we can only guarantee that $v$ satisfies the \emph{incomplete boundary condition} $v\mygeq 0$ and $\partial_nv\geq0$ over $\partial\Omega^*$, where $v\mygeq0$ just means that the integral of $v$ over balls centered at zero is non-negative (see \eqref{eq:concentration}). Consequently, we end up with the following lower bound, involving this incomplete boundary condition:
$$
\Gamma(\Omega,\tau)+\frac{\tau^2}{4}\geq\min_{\substack{v\in H_0^1\cap H^2(\Omega^*),\\v\mygeq0,\partial_nv\geq0}}\fr{\dint_\Omega\left(\Delta v-\frac{\tau}{2}v\right)^2}{\dint_\Omega v^2}.
$$
Observe that if we had $v\geq0$ instead of $v\mygeq0$, we would immediately conclude that $v\in H_0^2(\Omega^*)$, and the above quotient would
become equivalent to the variational characterization of $\Gamma(\Omega^*,\tau)$ (modulo the translation factor $\tau^2/4$). After a careful analysis, we were able to complete the boundary condition, and to prove that any minimiser of the quotient is actually in $H_0^2(\Omega^*)$ (see Proposition \ref{prop:pb 1 boule}). Basically, the idea is that if not saturated, the constraint on the normal derivative of $v$ would become void, and hence it would be equivalent to minimising over the whole space $H_0^1\cap H^2(\Omega^*)$. This would correspond to the variational characterization of the first eigenvalue of the operator $(\Delta-\tau/2)^2$ under Navier boundary conditions $v=\Delta v=0$ on $\partial\Omega^*$. But the associated eigenfunction does not satisfy the initial incomplete boundary condition, which is a contradiction. Overall, this yields our Szeg\H{o}-type result.

In the case where $u$ changes sign, the situation is more complicated. Indeed, we have to split $u$ into positive and negative parts. But, as shown above, the
symmetrisation procedure that we employ preserves the boundary condition only incompletely for each of the positive and the negative parts. The loss
of information is then too strong and does not allow us to finish the proof -- see Remark \ref{rmq:echec FK avec fct propre signee} for more details.

\subsubsection*{Saint--Venant-type inequality}
As usual, the situation appears to be more favourable in the case of the torsional rigidity, as for this functional more variational characterizations are available. One of them is:
$$
-T(\Omega,\tau)=\min_{u\in H_0^2(\Omega)}\int_\Omega(\Delta u)^2+\tau\int_\Omega|\nabla u|^2-2\int_\Omega u.
$$
As before, we may factorise it in order to display the term $\Delta u-\tau/2u$, which is compatible with our new symmetrisation procedure:
$$
\int_\Omega(\Delta u)^2+\tau\int_\Omega|\nabla u|^2-2\int_\Omega u=\int_\Omega(\Delta u-\tau/2u)^2-\tau^2/4\int_\Omega u^2-2\int_\Omega u.
$$
Let us address immediately the general case where $u$ changes sign, and split $u$ into positive and negative parts. If we applied our symmetrisation method to both $u_+$
and $u_-$, we would face the issue mentioned above of loss of information regarding the boundary condition. Therefore, inspired by the asymmetric treatment proposed in \cite{ashbaugh-bucur-laugesen-leylekian}, our idea was to process $u_+$ and $u_-$ in different ways.
Indeed, while for $u_+$ we symmetrised $(-\Delta +\tau/2)u_+$ as above, for $u_-$ we followed the traditional approach of Talenti, and symmetrised $-\Delta u_-$. This hybrid
procedure allows us to preserve most of the boundary information regarding $u_-$. Of course, in return, it decreases more dramatically the energy, since the lower order
terms associated with $u_-$ have to be discarded. Overall, the operation gives rise to the asymetric two-ball problem
$$
-T(\Omega,\tau)\geq\min_{v,w}\int_{B_a}(\Delta v)^2+\tau\int_{B_a}|\nabla v|^2-2\int_{B_a}v+\int_{B_b}(\Delta w)^2,
$$
where $v\in H_0^1\cap H^2(B_a)$ and $w\in H_0^1\cap H^2(B_b)$ satisfy the \emph{incomplete coupling condition} $v,w\mygeq0$ and,
$\int_{\partial B_a}\partial_n v\geq\int_{\partial B_b}\partial_n w$. The balls $B_a$ and $B_b$ appearing in the energy are, as usual, defined by
$\left|B_a\right|=\left|\{u_+>0\}\right|$ and $\left|B_b\right|=\left|\{u_->0\}\right|$. Once again, a careful study of the minimising pairs $(v,w)$
allowed us to complete
the boundary coupling condition, and to prove that we have $\int_{\partial B_a}\partial_n v=\int_{\partial B_b}\partial_nw$ (actually the situation is slightly more complicated, see Proposition~\ref{prop:derivation pb 2 boule complet}).

Observe that, when $B_b=\Omega^*$, the asymmetric two-ball problem (with completed coupling condition) amounts to $0$, while when $B_a=\Omega^*$, its value is $-T(\Omega^*,\tau)<0$. In fact, after tedious computations, we were able to prove that the completed two-ball problem decreases with $a$ (see Proposition \ref{prop:pb 2 boules torsion}). As a result, we obtain the desired Saint--Venant type inequality.

Let us mention that the above philosophy of treating in a non-symmetric fashion the two-ball problem has become quite frequent recently~\cite{leylekian2,ashbaugh-bucur-laugesen-leylekian}, and seems to be an appropriate way to address this type of problems. Unfortunately, it would not be possible to apply this method in the case of the first eigenvalue. This is because the asymmetric quotient obtained after the hybrid factorisation method would then be
$$
\frac{\int_{B_a}(\Delta v)^2+\tau\int_{B_a}|\nabla v|^2+\int_{B_b}(\Delta w)^2}{\int_{B_a}v^2+\int_{B_b}w^2}.
$$
Now, even if one were able to complete the boundary coupling condition, the resulting two-ball problem would not provide a good enough lower bound, since for $B_b=\Omega^*$ its value would be $\Gamma(\Omega^*,0)<\Gamma(\Omega^*,\tau)$.

\subsection*{Ancillary results and structure of the paper}

Besides the Szeg\H{o}- and the Saint--Venant-type results, in this paper we also aimed at providing a rather broad spectral picture of
this class of mixed-order operators. In particular, after the preliminaries of Section \ref{sec:preliminaires}, we study the behaviour of the
spectrum with respect to the tension parameter $\tau$ in Section \ref{sec:behaviour wrt tension}. This leads us to several bounds on the first eigenvalue. The most
interesting is probably that of Corollary \ref{corollaire:triangles}, according to which triangles have a greater eigenvalue than balls with the same area, for any non-negative value of $\tau$. A similar conclusion is obtained in Corollary~\ref{corollaire:convexes} for elongated enough planar convex sets.

On the other hand, we analyse the sensitivity of the spectrum with respect to deformations of the domain in Section \ref{sec:behaviour wrt shape}. Since 
mixed-order operators are not homogeneous in general, interesting phenomena occur. The major part of the section is devoted to the theory of shape 
derivatives for simple eigenvalues of $S_{\tau}$. In particular, we compute the optimality condition associated with volume preserving deformations.

After this broad insight, we address our first main result, the Szeg\H{o}-type inequality, in Section~\ref{sec:szego}. We first derive the two-ball problem with incomplete
boundary condition (§~\ref{subsec:pb 1 boule}), after which the boundary condition is completed (§~\ref{subsec:pb 1 boule complet}), yielding the proof of
Theorem \ref{thmsz}.

Section~\ref{sec:torsion} is finally devoted to the Saint--Venant-type result of Theorem~\ref{thmsv}. To that end, we extract the two-ball problem, and complete the coupling condition in §~\ref{subsec:pb 2 boules complet}. Then we solve the completed two-ball problem in §~\ref{subsec:pb 2 boules}, and this finally leads to the proof
of Theorem~\ref{thmsv}, in §~\ref{subsec:saint-venant}. To conclude, we make explicit the bound obtained by investigating problem \eqref{torsion-eq} over the ball. There, the torsion function and the torsional rigidity are obtained in §~\ref{subsec:torsion}.

\subsection*{Notations}

Unless otherwise mentioned, we will use the following notations.
\renewcommand\labelitemi{$\cdot$}

\begin{itemize}
\item $\R$: set of real numbers.
\item $\N$: set of natural numbers, including $0$, that is $\N=\{0,1,\dots\}$. The set of positive natural numbers is denoted $\N^*$.
\item $d$: dimension of the ambient space ($d\in\N^*$).
\item $\Omega$: open subset of $\R^d$.
\item $B$: a Euclidean ball of $\R^d$.
\item $B_r$: the Euclidean ball of $\R^d$ of radius $r$, centred at the origin.
\item $\mathbb{S}^{d-1}$: boundary of $B_1$.
\item $|\cdot|$: $d$-dimensional Lebesgue measure or $(d-1)$-dimensional Hausdorff measure, depending on the situation.
\item $\Delta$: \enquote{analysts} Laplacian, defined by $\Delta=\dsum_{i=1}^d\partial_i^2$, where  $\partial_i$ is the directional derivative in the direction of the $i$-th coordinate.
\item $\Delta^2$: bilaplacian, defined by $\Delta^2 u =\Delta(\Delta u)$.
\item $\tau$: tension parameter ($\tau\in\R$).
\item $\partial_n$: outward normal derivative.
\item $L^p(\Omega)$: Lebesgue space of exponent $p$.
\item $C^{k,\alpha}(E)$: space of continuously differentiable functions in the interior of $E$, up to order $k$, whose derivatives shall be extended by continuity to the whole $E$, and with $\alpha$-Hölder continuous $k$-th derivatives.
\item $W^{m,p}(\Omega)$: Sobolev space of functions whose derivatives of order less or equal than $m$ are in $L^p(\Omega)$.
\item $H^m(\Omega)$: simplified notation for the Sobolev space $W^{m,2}(\Omega)$.
\item $H_0^m(\Omega)$: Sobolev space of functions in $H^m(\Omega)$ vanishing together with their derivatives up to order $m-1$ on the boundary. More formally, $H_0^m(\Omega)$ is the closure of $C_c^\infty(\Omega)$ with respect to $H^m(\Omega)$.
\item $S_\tau$: mixed order operator, defined by $\Delta^2-\tau\Delta$, and acting over the domain $H_0^2(\Omega)$.
\item $\Gamma(\Omega,\tau)$: first eigenvalue of $S_\tau$ over $\Omega$.
\item $u$: first eigenfunction of $S_\tau$.
\item $T(\Omega,\tau)$: torsional rigidity associated with $S_\tau$ over $\Omega$.
\item $w$: torsion function of $S_\tau$.
\item $\Gamma(\Omega)$: simplified notation for $\Gamma(\Omega,0)$.
\item $\Lambda(\Omega)$: first buckling eigenvalue over $\Omega$.
\item $\lambda(\Omega)$: first eigenvalue of the Dirichlet Laplacian over $\Omega$.
\item $^*$: Schwarz symmetrisation.
\item $\myleq,\mygeq$: relation of concentration between radially symmetric real-valued functions.
\item $\nu=d/2-1$.
\item $J_\mu,Y_\mu$: Bessel functions of order $\mu$ of first and second kind, respectively.
\item $I_\mu,K_\mu$: modified Bessel functions of order $\mu$ of first and second kind, respectively.
\item $(j_{\mu,i})_{i\in\N^*}$: positive zeros of $J_\mu$, counted in increasing order.
\item $j_\mu$: simplified notation for $j_{\mu,1}$.
\item $S_\kappa$: spherical harmonic of index $\kappa\in\N$ over $\mathbb{S}^{d-1}$.
\item $\gamma_\mu$: smallest positive zero of $J_{\mu+1}/J_\mu+I_{\mu+1}/I_\mu$.
\end{itemize}

\section{Preliminaries}\label{sec:preliminaires}

Let $\Omega$ be an open set of $\R^d$ and $\tau\in\R$. In this section, we do not make any sign assumption on~$\tau$, unless otherwise mentioned.

\subsection*{Spectrum of $S_{\tau}$}
We say that $\Gamma$ is an eigenvalue of $S_\tau$ and that $u$ is an associated eigenfunction if $u\in H_0^2(\Omega)$ satisfies the first equation of problem \eqref{eq:pb vp} in the sense of distributions. This means that, for all $v\in H_0^2(\Omega)$,
$$
\int_\Omega\Delta u\Delta v+\tau\int_\Omega \nabla u\nabla v=\Gamma\int_\Omega uv.
$$
When $\Omega$ is of finite measure, $S_\tau$ admits a sequence of eigenvalues with finite multiplicity, converging to infinity. This property is a consequence of the fact that the corresponding quadratic form is bounded from below:
$$
\int_\Omega(\Delta u)^2+\tau\int_\Omega|\nabla u|^2\geq c\int_\Omega u^2,
$$
for some $c\in\R$ . While this is immediate when $\tau\geq0$ by Poincaré's inequality, when $\tau<0$ it follows from applying the Cauchy-Schwarz inequality and minimising the resulting second-order polynomial in $\|\Delta u\|_{L^2(\Omega)}$:
$$
\int_\Omega(\Delta u)^2+\tau\int_\Omega|\nabla u|^2\geq\int_\Omega(\Delta u)^2+\tau\sqrt{\int_\Omega(\Delta u)^2\int_\Omega u^2}\geq-\frac{\tau^2}{4}\int_\Omega u^2.
$$
That the quadratic form is bounded from below allows to construct a bounded resolvent operator on $L^2(\Omega)$. Furthermore, the resolvent is compact, which follows from the fact that $\|\Delta u\|_{L^2(\Omega)}$ defines a norm whenever $u\in H_0^2(\Omega)$, and from the fact that $H_0^2(\Omega)$ is compactly embedded in $L^2(\Omega)$ whenever $\Omega$ has finite volume. As a consequence, $S_\tau$ has a discrete spectrum and the first eigenvalue $\Gamma(\Omega,\tau)$ (or $\Gamma(\tau)$ if there is no possible confusion) is well-defined. It can be characterised variationally by
\begin{equation}\label{eq:caracterisation variationnelle}
\Gamma(\Omega,\tau)=\min_{\substack{v\in H_0^2(\Omega)\\v\neq0}}\fr{\dint_\Omega(\Delta v)^2+\tau\dint_\Omega|\nabla v|^2}{\dint_\Omega v^2}.
\end{equation}
Observe that for $\tau\geq-\Lambda(\Omega)$, where $\Lambda(\Omega)$ is the first buckling eigenvalue (see below), the eigenvalue $\Gamma(\Omega,\tau)$ is non-negative. Note also that any function realizing the above quotient is an eigenfunction associated with $\Gamma(\Omega,\tau)$. Let us point out that, as usual with fourth order operator \cite{duffin-shaffer,coffman-duffin-shaffer,coffman,kozlov-kondratiev-mazya,coffman-duffin,wieners,brown-davies-jimack-mihajlovic}, there is no reason to assume $\Gamma(\Omega,\tau)$ to be simple and the corresponding eigenfunctions to be one-sign. Actually, as $\tau$ varies this property may fail even on domains as nice as balls \cite{coster-nicaise-troestler15}.

\subsection*{Buckling problem}

Although not directly concerned by Theorems~\ref{thmsz} and~\ref{thmsv}, the buckling problem enjoys close links with the operator $S_{\tau}$. The 
eigenvalues of the buckling problem (or buckling eigenvalues) over $\Omega$, are defined as the real numbers $\Lambda$ for which the problem
\begin{equation}\label{eq:pb vp buckling}
\left\{
\begin{array}{rcll}
\Delta^2 u & = & -\Lambda\Delta u & \text{in }\Omega,\\
u & = & 0 &\text{on }\partial\Omega,\\
\partial_n u & = & 0 &\text{on }\partial\Omega,
\end{array}
\right.
\end{equation}
admits a nontrivial solution $u:\Omega\to\R$, called an associated buckling eigenfunction. As previously, this has to be understood weakly, in the sense that $u\in H_0^2(\Omega)$ satisfies
$$
\int_\Omega\Delta u\Delta v=\Lambda\int_\Omega \nabla u\nabla v,\qquad\forall v\in H_0^2(\Omega).
$$
Again, whenever $\Omega$ has finite volume one can show that buckling eigenvalues form a sequence of real numbers converging to infinity. Note however that, unlike the eigenfunctions of $S_\tau$, which form a complete orthonormal system in $L^2(\Omega)$, the buckling eigenfunctions form a complete orthonormal system in $H^1(\Omega)$ (this has surprising consequences, such as the fact that in one dimension the even eigenfunctions do not change sign \cite[equation (10)]{grunau}). This is because the resolvent, mapping a given data $f\in H^1(\Omega)$ to the solution $u\in H_0^2(\Omega)$ of the equation $\Delta^2u=-\Delta f$, defines a compact endomorphism of $H^1(\Omega)$. In any case, the smallest buckling eigenvalue is well-defined. We denote it by $\Lambda(\Omega)$, and recall that it admits the following 
variational formulation:
\begin{equation}\label{eq:caracterisation variationnelle buckling}
\Lambda(\Omega)=\min_{\substack{v\in H_0^2(\Omega)\\v\neq0}}\fr{\dint_\Omega(\Delta v)^2}{\dint_\Omega |\nabla v|^2}.
\end{equation}
This shows, by Poincaré's inequality, that $\Lambda(\Omega)>0$. Together with~\eqref{eq:caracterisation variationnelle}, the previous formula also confirms that for any $\tau\geq-\Lambda(\Omega)$, the first eigenvalue $\Gamma(\Omega,\tau)$ of $S_\tau$ is non-negative. As before, note that $\Lambda(\Omega)$ may be multiple and that the first eigenfunctions may change sign \cite{kozlov-kondratiev-mazya,buoso-parini}. We refer to \cite{coster-nicaise-troestler15} for a detailed study of the buckling problem over the ball, where the buckling eigenvalues and eigenfunctions are computed explicitly. See \cite{buoso-lamberti13,buoso-freitas} for further information and more general developments on buckling type problems.

\subsection*{Torsion function and torsional rigidity}
The torsion function $w$ is defined as the function in $H_0^2(\Omega)$ satisfying the first equation of problem \eqref{torsion-eq} in the sense of distributions. This means that, for all $v\in H_0^2(\Omega)$,
$$
\int_\Omega\Delta w\Delta v+\tau\int_\Omega\nabla w\nabla v=\int_\Omega v. 
$$
Existence and uniqueness for the torsion function shall be proved, by the Fredholm alternative, whenever $-\tau$ is not an eigenvalue of the buckling problem. When $-\tau$ is a buckling eigenvalue, the situation is more complicated, and either existence or uniqueness may fail (see in particular §~\ref{subsec:torsion}, where the torsion function is computed over balls). Note that, as usual, $w$ is in general sign-changing \cite{grunau-sweers}. Then, we define the torsional rigidity, also called sometimes compliance or mean deflection, by
\begin{equation*}\label{eq:rigidité torsionnelle}
 T(\Omega,\tau) = \dint_{\Omega} w.
\end{equation*}
Whenever $\tau>-\Lambda(\Omega)$, it is also interesting to observe that the torsion corresponds to the minimiser of problem
$$
\min_{u\in H_0^2(\Omega)}\int_\Omega(\Delta u)^2+\tau\int_\Omega|\nabla u|^2-2\int_\Omega u.
$$
The important point is that, for $\tau$ in this range, the above energy is coercive, hence reaches its minimum. By computing the value of this energy at $w$, we realize that it is related to the torsional rigidity in the following way:
\begin{equation}\label{eq:caracterisation variationnelle torsion}
-T(\Omega,\tau)=\min_{u\in H_0^2(\Omega)}\int_\Omega(\Delta u)^2+\tau\int_\Omega|\nabla u|^2-2\int_\Omega u.
\end{equation}
Let us also point out, even if this will not be used in this paper, that the torsion function and the torsional rigidity are related to the following eigenproblem:
$$
\left\{
\begin{array}{rcll}
\Delta^2 u -\tau\Delta u & = & \mu\int_\Omega u & \text{in }\Omega,\\
u & = & 0 &\text{on }\partial\Omega,\\
\partial_n u & = & 0 &\text{on }\partial\Omega,
\end{array}
\right.
$$
Indeed, whenever the torsion function $w$ exists and is unique, the above problem admits only one eigenvalue $\mu=T(\Omega,\tau)^{-1}$ and any associated eigenfunction is a multiple of $w$. Interestingly, this provides an alternative variational formulation for the torsional rigidity:
$$
T(\Omega,\tau)^{-1}=\min_{\substack{v\in H_0^2(\Omega)\\v\neq0}}\fr{\dint_\Omega(\Delta v)^2+\tau\dint_\Omega|\nabla v|^2}{\left(\dint_\Omega v\right)^2}.
$$
This point of view was adopted in \cite[Section~5.5]{leylekian-these}, but in the present paper we rather follow the approach of \cite{ashbaugh-bucur-laugesen-leylekian}, based on formula \eqref{eq:caracterisation variationnelle torsion}. Yet the previous connection justifies to view the torsional rigidity as a spectral quantity.

\subsection*{Symmetrisation techniques}
The variational characterizations \eqref{eq:caracterisation variationnelle} and \eqref{eq:caracterisation variationnelle torsion} allow to employ techniques steming from the calculus of variations. One of them is the process of level-set rearrangement, or more concisely symmetrisation. Let us recall some basic facts on this tools. The symmetrisation of a measurable subset $\omega$ of $\R^d$ is the open ball $\omega^*$ centered at the origin, with volume $|\omega|$. Then, for a function $f:\Omega\to\R^d$, its symmetrisation is a radially symmetric and non-increasing function $f^*:\Omega^*\to\R$ whose upper level sets satisfy
$$
\{f^*>t\}=\{f>t\}^*\quad a.e.,
$$
for all $t\in\R$. We emphasise that, in the wake of \cite{kesavan,baernstein}, our definition is compatible with sign-changing functions, in the sense that $\int_\Omega f=\int_{\Omega^*} f^*$. On the contrary, the traditional definition of Schwarz symmetrisation (see e.g. \cite{talenti76,alvino-lions-trombetti90}) would correspond to $|f|^*$, but this would preserve integration only for positive functions. Since we are working with sign-changing functions in this document, the definition proposed above is more suitable, as was first noted in \cite{talenti81}.

In the present paper, the central tool with respect to level set rearangement is a version of the famous comparison principle of Talenti \cite{talenti76} for elliptic operators with a zero order term. In this case, the comparison result is weaker than the classic one (for which see e.g. \cite[Theorem~3.1.1]{kesavan}) and it involves the order relation $\myleq$ defined for two radially symmetric functions $f$ and $g$ by:
\begin{equation}\label{eq:concentration}
f\myleq g\quad\Leftrightarrow\quad \int_{B_r}f\leq\int_{B_r}g,\quad\forall r\geq0.
\end{equation}
When the above relation holds, we say that $f$ is less concentrated than $g$, and $g$ is more concentrated than $f$. This is closely related to the well-known relation of domination $\preceq$
introduced in \cite{alvino-lions-trombetti89}, and defined by $f\preceq g \Leftrightarrow f^*\myleq g^*$. In particular, the relation of concentration enjoys similar properties  to those of that
relation. For instance, if $f^*\myleq g$ then one can compare the $L^p$ norms of the positive parts of $f$ and $g$. More generally, for any convex non-decreasing function $\Phi:\R\to\R$,
\begin{equation*}\label{eq:comparaison Phi}
\int\Phi(f)\leq\int\Phi(g).
\end{equation*}
Following~\cite[Proposition 2.1]{alvino-lions-trombetti89}, this may be obtained by noting that $\Phi(g)-\Phi(f^*)\geq\Phi'(f^*)(g-f^*)=\Phi'(f)^*\partial_r\left[\int_{B_{|x|}}(g-f^*)\right]$ by convexity of $\Phi$, and then integrating by parts over a ball $B$:
$$
\int_{B}(\Phi(g)-\Phi(f^*))\geq \int_{\partial B}\Phi'(f)^*\int_{B}(g-f^*)-\int_{B}\left(\partial_r[\Phi'(f)^*](x)\int_{B_{|x|}}(g-f^*)\right)dx\geq0,
$$
since $\Phi'(f)^*$ is non-negative and radially non-increasing. A comparison result for elliptic operators with lower order terms was proved for the first time in \cite{alvino-lions-trombetti90}. But since we need a statement compatible with sign-changing functions, we rather refer to~\cite[Theorem 10.10]{baernstein}:

\begin{theoreme}[\cite{baernstein}, Theorem 10.10]\label{thm:talenti}
Let $\Omega$ be an open subset of $\R^d$ of finite volume, let $f\in L^2(\Omega)$, and let a real number $\sigma>-\lambda(\Omega^*)$, where $\lambda(\Omega^*)$ is the first eigenvalue of the Dirichlet Laplacian over $\Omega^*$. Consider the solutions $u\in H_0^1(\Omega)$ and $v\in H_0^1(\Omega^*)$ of the problems
$$
\left\{
\begin{array}{rcll}
-\Delta u+\sigma u & = & f &\text{in }\Omega,\\
u & = & 0 &\text{on }\partial\Omega,
\end{array}
\right.
\qquad
\left\{
\begin{array}{rcll}
-\Delta v+\sigma v & = & f^* &\text{in }\Omega^*,\\
v & = & 0 &\text{on }\partial\Omega^*,
\end{array}
\right.
$$
Assume furthermore that $u$ is non-negative. Then, $u^*\myleq v$, and hence for all $p\in[1,\infty[$,
$$
\int_\Omega |u|^p\leq\int_{\Omega^*}|v|^p.
$$
\end{theoreme}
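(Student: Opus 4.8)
The plan is to prove the two assertions separately, the $L^p$ bound being an immediate consequence of the concentration comparison. Indeed, once $u^*\myleq v$ is known, since $u\geq0$ we may apply the inequality recorded just above (the one obtained from the convexity of $\Phi$) with $\Phi(s)=(s_+)^p$, which is convex and non-decreasing for $p\geq1$: this gives $\int_\Omega u^p=\int_\Omega(u_+)^p\leq\int_{\Omega^*}(v_+)^p\leq\int_{\Omega^*}|v|^p$. So the real task is to establish $u^*\myleq v$, that is $\int_{B_r}u^*\leq\int_{B_r}v$ for every $r\geq0$, and I would do this by adapting Talenti's symmetrisation argument so as to accommodate the zero-order term, following Alvino--Lions--Trombetti and Baernstein. (Note that $\sigma>-\lambda(\Omega^*)$ together with Faber--Krahn gives $\sigma>-\lambda(\Omega)$ as well, so $u$ and $v$ are well-defined and $v$ is radial by uniqueness.)

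First I would extract a differential inequality for the distribution function $\mu(t):=|\{u>t\}|$, $t\geq0$, where $u\geq0$ is used. From the equation and the divergence theorem, $\int_{\{u=t\}}|\nabla u|\,d\mathcal{H}^{d-1}=\int_{\{u>t\}}(f-\sigma u)$ for a.e.\ $t>0$; combining this with the coarea identity $-\mu'(t)=\int_{\{u=t\}}|\nabla u|^{-1}\,d\mathcal{H}^{d-1}$, with Cauchy--Schwarz, and with the isoperimetric inequality $\mathcal{H}^{d-1}(\partial\{u>t\})\geq d\omega_d^{1/d}\mu(t)^{1-1/d}$, where $\omega_d=|B_1|$, one obtains
$$
d^2\omega_d^{2/d}\,\mu(t)^{2-2/d}\leq(-\mu'(t))\int_{\{u>t\}}(f-\sigma u).
$$
I would then control the right-hand side by quantities depending only on $\mu$ and $f^*$: by equimeasurability $\int_{\{u>t\}}u=\int_0^{\mu(t)}u^{\#}$, and by Hardy--Littlewood $\int_{\{u>t\}}f\leq\int_0^{\mu(t)}f^{\#}$, where $f^{\#}$ is the one-dimensional decreasing rearrangement, so that $\int_{B_\rho}f^*=\int_0^{|B_\rho|}f^{\#}$. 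Passing to the volume variable $m=\mu(t)$ and setting $\Psi_u(m):=\int_0^m u^{\#}$ --- equivalently $\Psi_u(\omega_dr^d)=\int_{B_r}u^*$, so that $\int_{\{u>t\}}u=\Psi_u(m)$ --- the inequality becomes, after the usual regularity considerations for rearrangements,
$$
-d^2\omega_d^{2/d}\,m^{2-2/d}\,\Psi_u''(m)+\sigma\,\Psi_u(m)\leq G(m),\qquad G(m):=\int_0^m f^{\#},
$$
together with $\Psi_u(0)=0$ and $\Psi_u'(|\Omega|)=0$ (the latter because $u\in H_0^1(\Omega)$ and $u\geq0$ force $\operatorname*{ess\,inf}_\Omega u=0$).

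Second, the same manipulation applied to the radial problem for $v$ --- integrating $-(r^{d-1}v')'+\sigma r^{d-1}v=r^{d-1}f^*$ over $B_\rho$, with $\Psi_v(m):=\int_{B_r}v$ --- produces, \emph{with no monotonicity assumption on $v$}, the identity $-d^2\omega_d^{2/d}m^{2-2/d}\Psi_v''+\sigma\Psi_v=G$ on $[0,|\Omega|]$, with $\Psi_v(0)=0$ and $\Psi_v'(|\Omega|)=v|_{\partial\Omega^*}=0$. Writing $L_0\Psi:=-d^2\omega_d^{2/d}m^{2-2/d}\Psi''$ and $L:=L_0+\sigma$, subject to the Dirichlet condition at $m=0$ and the Neumann condition at $m=|\Omega|$, the function $W:=\Psi_v-\Psi_u$ therefore satisfies $LW\geq0$, $W(0)=0$, $W'(|\Omega|)=0$. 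To conclude $W\geq0$ I would use a maximum principle for $L$: applying the same transformation to the first Dirichlet eigenfunction of $-\Delta$ on $\Omega^*$ (which is radial, positive, and vanishes on $\partial\Omega^*$) shows that $L_0$ has principal eigenvalue $\lambda(\Omega^*)$ under these boundary conditions, so that $L=L_0+\sigma$ obeys the maximum principle precisely when $\sigma>-\lambda(\Omega^*)$ --- which is exactly the hypothesis. Hence $W\geq0$, i.e.\ $\int_{B_r}u^*\leq\int_{B_r}v$ for all $r$, that is $u^*\myleq v$, and the proof is complete.

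The hard part is everything absorbed into ``after the usual regularity considerations'' and into the final maximum principle. Structurally, the difficulty comes from the zero-order term: for $\sigma>0$ the quantity $-\sigma\int_{\{u>t\}}u$ has the unfavourable sign, so, in contrast with classical Talenti, it cannot be discarded or bounded pointwise in $t$; it has to be carried along as the term $\sigma\Psi_u$, which is what turns the direct integration of Talenti's ODE into the comparison argument above, and which is also the reason only the concentration relation $u^*\myleq v$ survives, rather than the pointwise bound $u^*\leq v$. Making that comparison rigorous is delicate: the operator $L$ degenerates at $m=0$, the boundary conditions are of mixed type, and $LW\geq0$ holds only in an a.e.\ (weak) sense, so one needs an appropriate weak maximum principle for $L$, and it is exactly here that the hypothesis $\sigma>-\lambda(\Omega^*)$ plays its role, as the sharp threshold below which the principle fails. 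The remaining ingredients --- justifying the truncation and coarea computations for a general $H_0^1$ solution, the absolute continuity and a.e.\ second differentiability of $\Psi_u$ and $\mu$ (handled by approximation when $u$ has flat parts), and the Hardy--Littlewood step for sign-changing $f$ --- are standard but require the customary care.
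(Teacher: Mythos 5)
This statement is not proved in the paper at all: it is quoted verbatim as an external result (\cite{baernstein}, Theorem 10.10, in the line of Alvino--Lions--Trombetti), so there is no internal proof to compare against. Your reconstruction follows exactly the standard route of that literature, and its architecture is sound: the reduction of the $L^p$ bound to $u^*\myleq v$ via the convexity inequality recorded just before the theorem is correct (using $u\geq0$ and $\Phi(s)=(s_+)^p$); the level-set/coarea/isoperimetric derivation of the differential inequality for $\Psi_u(m)=\int_0^m u^{\#}$, the exact identity for the radial $\Psi_v$, and the identification of $\lambda(\Omega^*)$ as the principal eigenvalue of the degenerate operator $L_0$ under the mixed conditions $\Psi(0)=0$, $\Psi'(|\Omega|)=0$ (by pushing forward the first radial Dirichlet eigenfunction of $\Omega^*$) are all the right ingredients, and you correctly locate where each hypothesis ($u\geq0$ for the level-set identity and for $\Psi_u'(|\Omega|)=0$; $\sigma>-\lambda(\Omega^*)$ for the maximum principle) enters. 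The only caveat is that, as you acknowledge, the two steps carrying essentially all the technical weight --- the a.e.\ differentiability/absolute-continuity issues needed to pass from the integrated level-set inequality to the pointwise ODE inequality for $\Psi_u$, and the weak maximum principle for $L_0+\sigma$, which degenerates at $m=0$ and is only known to satisfy $LW\geq0$ almost everywhere --- are named rather than carried out; as written the proposal is a faithful and correct sketch of the cited proof rather than a self-contained replacement for the citation.
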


\section{Behaviour of the spectrum with respect to $\tau$}\label{sec:behaviour wrt tension}

In this part, we fix a domain $\Omega$ of finite volume, and study the sensitivity of $\Gamma(\Omega,\tau)$ with respect to the tension parameter $\tau\in\R$. First, as a consequence
of~\eqref{eq:caracterisation variationnelle}, several properties can be easily deduced. Indeed, being the infimum of linear non-decreasing functions, $\Gamma(\Omega,\tau)$ is concave and
non-decreasing with respect to $\tau$. We will see in a few moment that it is actually increasing. Denote by $u_\tau$ an $L^2$-normalised eigenfunction associated with $\Gamma(\tau)$. According to
Kato's theory of analytic perturbation of operators \cite[Chapter VII, \S~3]{kato}, $u_\tau$ can be extended into a family of eigenfunctions associated with $\Gamma(\tau)$, both depending analytically on $\tau$ except at points where the multiplicity of $\Gamma(\tau)$ changes, but those points remain isolated. Furthermore, we have
$$
\left\{
\begin{array}{rcll}
\Delta^2 u_\tau'-\tau\Delta u_\tau'-\Delta u_\tau & = & \Gamma(\tau) u_\tau'+\Gamma'(\tau) u_\tau & \text{in }\Omega,\\
u_\tau' & = & 0 &\text{on }\partial\Omega,\\
\partial_n u_\tau' & = & 0 &\text{on }\partial\Omega,
\end{array}
\right.
$$
together with the condition $\int_\Omega u_\tau'u_\tau=0$, where $'$ denotes the derivative with respect to $\tau$. After multiplying by $u_\tau$ and integrating by parts, we conclude, by \eqref{eq:pb vp} that
\begin{equation}\label{eq:derivee en tau}
\Gamma'(\tau)=\int_\Omega|\nabla u_\tau|^2
\end{equation}
Immediately, we observe that $\Gamma'(\tau)$ is not smaller than the first eigenvalue $\lambda(\Omega)$ of the Dirichlet Laplacian. In particular, $\Gamma(\cdot)$ is increasing with respect to $\tau$. By concavity of $\Gamma(\cdot)$, we also see that for non-negative values of $\tau$, $\Gamma'(\tau)$ is not greater than the gradient of any first $L^2$-normalised eigenfunction of the Dirichlet bilaplacian. For readability, we will denote by $\Gamma(\Omega):=\Gamma(\Omega,0)$ the associated first eigenvalue.

Lastly, standard arguments from perturbation theory applied to the operator $\tau^{-1}\Delta^2-\Delta$ (see e.g. \cite[section 4]{buoso-kennedy} or \cite[section 2.3.1]{henrot}) show that
the eigenvalues of the operator $\tau^{-1} S_{\tau}$, converge towards those of the Dirichlet Laplacian as $\tau\to+\infty$ (moreover, the eigenfunctions 
converge weakly in $H_0^1(\Omega)$ toward the eigenfunctions of the Laplacian). Note that when $\Omega$ is smooth, bounded and connected, it is even 
possible to refine the asymptotic behaviour \cite[p.~392]{frank}:
$$
\Gamma(\tau)\underset{\tau\to+\infty}{=}\lambda(\Omega)\tau+\sqrt{\tau}\int_{\partial\Omega}(\partial_n u_\infty)^2+O(1),
$$
where $u_\infty$ is the first normalised eigenfunction of the Dirichlet Laplacian. The previous discussion can be summarised in the following lemma (see also Figure~\ref{fig:comportement en tau}). We emphasise, however, that this lemma is not new as it can be found in \cite[Remark 4]{kawohl-levine-velte}.

\begin{lemme}\label{lemme:comportement en tau}
Let $\Omega$ be an open set of finite measure and let $u_0$ be a first $L^2$ normalised eigenfunction of $S_0$ on $\Omega$.
\begin{enumerate}
\item $\Gamma(\Omega,\cdot)$ is concave, inscreasing, and piecewise analytic over $\R$.
\item For all $\tau\geq0$,
$
\lambda(\Omega)\leq\partial_\tau\Gamma(\Omega,\tau)\leq\dint_\Omega|\nabla u_0|^2
$.
\item $\Gamma(\Omega,\tau)\underset{\tau\to+\infty}{\sim}\lambda(\Omega)\tau$.
\end{enumerate}
\end{lemme}

\begin{remarque}
We insist on the fact that it is likely for $\Gamma(\Omega,\cdot)$ not to be smooth over the whole real line, as one cannot exclude the possibility of a multiple first eigenvalue for some values of $\tau$.
\end{remarque}

\begin{proof}
The first two points follow from the above discussion (see in particular~\eqref{eq:derivee en tau}), hence we focus on the last point. To address it, we let $f\in L^2(\Omega)$ and consider $v_\tau\in H_0^2(\Omega)$ the solution of $\Delta^2v_\tau/\tau-\Delta v_\tau=f$ in $\Omega$. Multiplying the equation by $v_\tau$ and integrating by parts shows that $v_\tau$ remains bounded in $H_0^1(\Omega)$ uniformly with respect to $\tau>0$. Therefore, as $\tau\to+\infty$, $v_\tau$ converges weakly in $H_0^1(\Omega)$ and strongly in $L^2(\Omega)$ to some $v_\infty$, which must satisfy $-\Delta v_\infty=f$. In other words, the resolvent of $S_\tau/\tau$, as an operator from $L^2(\Omega)$ to $L^2(\Omega)$, simply converges to the resolvent of the Dirichlet Laplacian. By \cite[Theorem~2.3.2]{henrot}, we conclude that the convergence also holds in operator norm, and hence that the eigenvalues of $S_\tau/\tau$ converge to the eigenvalues of the Dirichlet Laplacian, which concludes.
\end{proof}

\begin{figure}
\includegraphics{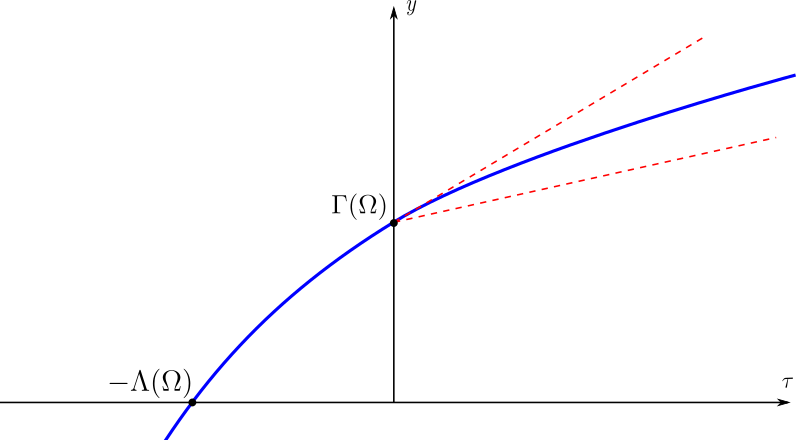}
\caption{Behaviour of the first eigenvalue $\Gamma(\Omega,\tau)$ (in blue) with respect to the tension parameter $\tau$. In dotted red, above the blue curve, a portion of the line of equation $y=\Gamma(\Omega)+\tau\int_\Omega|\nabla u_0|^2$. Also in dotted red, below the blue curve, a portion of the line $y=\Gamma(\Omega)+\tau\lambda(\Omega)$. The blue curve and the two dotted red lines cut the ordinates at $y=\Gamma(\Omega)$. The blue curve cuts the abcissa line at $-\tau=\Lambda(\Omega)$, the first buckling eigenvalue.}
\label{fig:comportement en tau}
\end{figure}

Thanks to the previous lemma, it is possible to obtain elementary bounds on the first eigenvalue (see also \cite{payne} in this respect). Indeed, one shall compare the blue curve of Figure~\ref{fig:comportement en tau} in the case of an arbitrary open set $\Omega$, and in the case of the ball $B$, only by comparing the associated dotted red lines. If the slope of the lower dotted red line for $\Omega$ is greater than the slope of the upper dotted red line for $B$, it means that the blue curve associated with $\Omega$ is eventually above the blue curve associated with $B$. This is basically what is stated in the next corollary.

\begin{corollaire}\label{corollaire:comportement vp en tau}
Let $\Omega$ be an open set of finite measure and $B$ an open ball with the same volume. Assume that $\lambda(\Omega)>\int_B|\nabla u_{B,0}|^2$, where $u_{B,0}$ is a first normalised eigenfunction of the bilaplacian over $B$. Then, we have the inequality
$$
\Gamma(\Omega,\tau)\geq\Gamma(B,\tau)
$$
as long as $\tau\geq\max(\tau_\Omega,0)$, where
$$
\tau_\Omega:= \frac{\Gamma(B)-\Gamma(\Omega)}{\lambda(\Omega)-\int_B|\nabla u_{B,0}|^2}.
$$
\end{corollaire}

\begin{proof}
If $\lambda(\Omega)>\int_B|\nabla u_{B,0}|^2$, for any $\tau\geq\tau_\Omega$ we have
$$
\Gamma(\Omega)+\tau\lambda(\Omega)\geq\Gamma(B)+\tau\frac{\int_B|\nabla u_{B,0}|^2}{\int_B u_{B,0}^2},
$$
and we conclude thanks to the second point of Lemma \ref{lemme:comportement en tau}.
\end{proof}

Using the previous bound, one may prove new results for some special shapes. To that end, it is first needed to determine explicitly the norm of the gradient of $u_{B,0}$, which is performed in Appendix (see Lemma \ref{lemme:norme L2 grad u0}). As a consequence, we are able to show that triangles cannot minimise the first eigenvalue of $S_{\tau}$, for any $\tau\geq0$.

\begin{corollaire}[Triangles]\label{corollaire:triangles}
If $T$ is a triangle and $B$ a disk with the same area, then for any $\tau\geq0$
$$
\Gamma(T,\tau)\geq\Gamma(B,\tau).
$$
\end{corollaire}

\begin{proof}
By Corollary \ref{corollaire:comportement vp en tau} and  Lemma \ref{lemme:norme L2 grad u0}, the inequality holds for any $\tau\geq\max(\tau_T,0)$ as long as
\begin{equation}\label{eq:critère pentes d=2}
R^2\lambda(T)>\gamma_0^2\frac{J_{1}(\gamma_0)^2}{J_0(\gamma_0)^2}\simeq 6.92801,
\end{equation}
where $R$ is the radius of $B$, and $\gamma_0\simeq 3.19622$ is defined in Appendix~\ref{annexe:fonction propre boule} as the first positive zero of a cross-product of Bessel functions. Note that we used the identity $J_1+J_{-1}=0$. Due to the inequality $\Gamma(T)\geq\Gamma(B)$ \cite{nadirashvili}, we have $\tau_T\leq0$, hence it remains only to check inequality \eqref{eq:critère pentes d=2}. By the polygonal Faber-Krahn inequality (for triangles) it is enough to consider the case of the equilateral triangle $T^*$. By homogeneity, we assume that $R=1$. In this case, the inner radius of $T^*$ is $r=3^{-3/4}$, and we have (see \cite[equation (6.18)]{mccartin})
$$
\lambda(T^*)=\frac{4\pi^2}{27r^2}=\frac{4}{3\sqrt{3}}\pi^2\simeq7.5976
$$
This proves inequality \eqref{eq:critère pentes d=2}, and hence the Faber-Krahn type inequality.
\end{proof}

As another application of Corollary \ref{corollaire:comportement vp en tau}, we mention the following result, which addresses the case of planar convex sets. It tells that when the set is elongated enough (which is expressed in terms of its perimeter), the corresponding first eigenvalue is greater or equal than that of a ball of same area, regardless of $\tau\geq0$.

\begin{corollaire}[Planar convex domains]\label{corollaire:convexes}
Let $\Omega$ be a planar convex bounded open set and $B$ an open disk with the same area. Then, for any $\tau\geq0$ we have the inequality
$$
\Gamma(\Omega,\tau)\geq\Gamma(B,\tau),
$$
as long as
$$
\frac{\pi|\partial\Omega|^2}{16|\Omega|}>\gamma_0^2\frac{J_{1}(\gamma_0)^2}{J_0(\gamma_0)^2}\simeq 6.92801,
$$
where $\gamma_0$ is defined in Appendix~\ref{annexe:fonction propre boule}.
\end{corollaire}

\begin{proof}
Makai's inequality for convex sets~\cite{makai} reads
$$
\lambda(\Omega)\geq \frac{\pi^2|\partial\Omega|^2}{16|\Omega|^2}.
$$
Then, by Corollary \ref{corollaire:comportement vp en tau} and Lemma \ref{lemme:norme L2 grad u0}, the Faber-Krahn type inequality holds for all $\tau\geq\max(\tau_\Omega,0)$ as long as
$$
\frac{\pi^2|\partial\Omega|^2}{16|\Omega|^2}>\frac{\gamma_0^2J_{1}(\gamma_0)^2}{R^2J_0(\gamma_0)^2},
$$
where $R$ is the radius of $B$. But $\tau_\Omega\leq0$ since $\Gamma(\Omega)\geq\Gamma(B)$ \cite{nadirashvili}. This concludes, after elementary manipulations of the previous inequality.
\end{proof}

\section{Behaviour of the spectrum with respect to $\Omega$}\label{sec:behaviour wrt shape}

Besides the dependence of $\Gamma(\Omega,\tau)$ with respect to the parameter $\tau$ studied in the previous section, it is interesting to analyse its behaviour with respect to the domain $\Omega$. First, let us mention that the functional $\Gamma(\Omega,\tau)$ is not homogeneous with respect to $\Omega$. Indeed, for any $\alpha>0$,
\begin{equation}
\Gamma\left(\alpha\Omega,\frac{\tau}{\alpha^2}\right)=\alpha^{-4}\Gamma(\Omega,\tau)
\end{equation}
This might appear problematic since homogeneity with respect to dilations is an important property of the operators classically addressed. Nevertheless, it is still possible to define the scale-invariant companion functional
\begin{equation}\label{eq:gamma}
\gamma(\Omega,\tau):=|\Omega|^{\frac{4}{d}}\Gamma\left(\Omega,|\Omega|^{-\frac{2}{d}}\tau\right).
\end{equation}
Observe that a set is optimal for problem \eqref{eq:pb min}, with the volume constraint $c\in\R$, if and only if it is optimal for the constraint-free functional $\gamma(\cdot,c^{\frac{2}{d}}\tau)$. Among others, this allows to compute in a straightforward way the optimality condition corresponding to problem~\eqref{eq:pb min}. To this end, for a given domain $\Omega$ and a given parameter $\tau$, we define
$$
\tilde{\Gamma}[V]:=\Gamma((id+V)\Omega,\tau),
$$
where $V$ is some vector field over $\R^d$. The first step is to make sure that $\tilde{\Gamma}$ can be differentiated at $V=0$. This follows from strandard argument, but for completeness we provide a proof in Appendix~\ref{annexe:derivee de forme}. Indeed, we show in Lemma \ref{lemme:derivee de forme} that, as long as $\Omega$ is $C^4$ regular and $\Gamma(\Omega,\tau)$ is simple, $\tilde{\Gamma}[V]$ is differentiable with respect to $V\in W^{2,\infty}(\R^d,\R^d)$, together with the associated $L^2$-normalised eigenfunction $u[V]$. Furthermore, if $u'$ denotes the derivative of $u$ in the direction of $V$ at $0$, we have $u'\in H^2(\Omega)$ and
$$
u'=0\text{ on }\partial\Omega,\qquad \partial_nu'=-\partial_n^2u V\cdot\vec{n}\text{ on }\partial\Omega.
$$
Consequently, we may differentiate the eigenproblem \eqref{eq:pb vp} in the direction of $V$. Writing $\Gamma'$ for $\partial_V\tilde{\Gamma}[0]$, the differentiated equation reads
$$
\Delta^2u'-\tau\Delta u'=\Gamma u'+\Gamma' u.
$$
Multiplying by $u$ and integrating by parts, which is possible since $u\in H^4(\Omega)$ by elliptic regularity, we see that
$$
\int_{\partial\Omega}\partial_n u'\Delta u=\Gamma\int_\Omega u'u+\Gamma'.
$$
But recall that, as a consequence of the normalisation of $u$, $\int_\Omega u'u=0$. Then, using the value of $\partial_nu'$, we conclude that
\begin{equation}\label{eq:derivee de forme}
\partial_V\tilde{\Gamma}[0]=-\int_{\partial\Omega}(\Delta u)^2V\cdot\vec{n}.
\end{equation}
Therefore, we have obtained the derivative of $\Gamma(\Omega,V)$ \enquote{with respect to $\Omega$}. Note that this formula, together with more general results, is available in \cite{buoso}, where the case of multiple eigenvalues is also addressed. In order to differentiate $\gamma$ we also need, in view of \eqref{eq:gamma}, to compute the derivative of $\Gamma(\Omega,\tau)$ with respect to $\tau$. This was done in Section~\ref{sec:behaviour wrt tension}, where we saw that when $\Gamma(\Omega,\tau)$ is simple, $\Gamma(\Omega,\cdot)$ is smooth near $\tau$, and the actual derivative is given by \eqref{eq:derivee en tau}. Lastly, we need the derivative of the volume functional $\mathcal{V}[V]:=|(id+V)\Omega|$ with respect to $V$. For $V\in W^{1,\infty}(\R^d,\R^d)$, this is given by a classic formula (see \cite[Théorème 5.2.2]{henrot-pierre}):
\begin{equation}\label{eq:derivee volume}
\partial_V\mathcal{V}[0]=\int_{\partial\Omega}V\cdot\vec{n}.
\end{equation}

The combination of \eqref{eq:derivee de forme}, \eqref{eq:derivee en tau}, and \eqref{eq:derivee volume}, allows to compute the derivative of the functional $\tilde{\gamma}[V]:=\gamma((id+V)\Omega,\tau)$ at $0$ in the direction of $V\in W^{2,\infty}(\R^d,\R^d)$. Indeed, by the chain rule, putting $\sigma=\tau|\Omega|^{-\frac{2}{d}}$, we have
\begin{equation}\label{eq:derivee de forme homogene}
\partial_V\tilde{\gamma}[0]=|\Omega|^{\frac{4}{d}}\left(\frac{4\Gamma(\Omega,\sigma)}{d|\Omega|}\int_{\partial\Omega}V\cdot\vec{n}-\int_{\partial\Omega}(\Delta u_\sigma)^2V\cdot\vec{n}-\frac{2\sigma}{d|\Omega|}\int_{\partial\Omega}V\cdot\vec{n}\int_\Omega|\nabla u_\sigma|^2\right),
\end{equation}
where $u_\sigma$ is a normalised eigenfunction associated with $\Gamma(\Omega,\sigma)$, which we assume to be simple. As a result of this discussion, one may extract the optimality condition related to problem~\eqref{eq:pb min}.

\begin{theoreme}\label{thm:condition optimalite}
Let $\tau\in\R$ and let $\Omega$ be a $C^4$ bounded open set solving problem \eqref{eq:pb min}. Assume that the first eigenvalue $\Gamma(\Omega,\tau)$ is simple, and let $u$ be an associated $L^2$-normalised eigenfunction. Then, $\Delta u$ is constant equal to $\pm\alpha$ over each connected component of $\partial\Omega$, where
$$
\alpha:=\left(\frac{4\Gamma(\Omega,\tau)}{d|\Omega|}-\frac{2\tau}{d|\Omega|}\int_\Omega|\nabla u|^2\right)^{1/2}.
$$
\end{theoreme}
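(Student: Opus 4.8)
The plan is to obtain this as an essentially immediate consequence of the shape–derivative computation carried out just above, by reinterpreting the vanishing of the first variation as a pointwise Euler--Lagrange identity on $\partial\Omega$.

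First I would observe that, since $|\Omega|=c$ and $\Omega$ solves \eqref{eq:pb min}, the remark following \eqref{eq:gamma} shows that $\Omega$ is a \emph{global}, constraint-free minimiser of the scale-invariant functional $\gamma(\cdot,c^{2/d}\tau)$ over all open subsets of $\R^d$. Hence, setting $\tilde\gamma[V]:=\gamma((\mathrm{id}+V)\Omega,c^{2/d}\tau)$, the map $V\mapsto\tilde\gamma[V]$ has a local minimum at $V=0$ on a neighbourhood of the origin in $W^{2,\infty}(\R^d,\R^d)$ (for $V$ small, $\mathrm{id}+V$ is a diffeomorphism, so $(\mathrm{id}+V)\Omega$ is an admissible competitor). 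Since $\Gamma(\Omega,\tau)$ is assumed simple, Lemma~\ref{lemme:derivee de forme} together with the smoothness of $\Gamma(\Omega,\cdot)$ near $\tau$ recorded in Section~\ref{sec:behaviour wrt tension} guarantees that $\tilde\gamma$ is differentiable at $0$, whence $\partial_V\tilde\gamma[0]=0$.

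Next I would feed this into \eqref{eq:derivee de forme homogene}. With the present choice of second argument one checks that the parameter there, $\sigma=|\Omega|^{-2/d}\,c^{2/d}\tau$, equals $\tau$, and that $u_\sigma$ may be taken equal to $u$; thus $\partial_V\tilde\gamma[0]=0$ reads
$$\int_{\partial\Omega}\left(\frac{4\Gamma(\Omega,\tau)}{d|\Omega|}-\frac{2\tau}{d|\Omega|}\int_\Omega|\nabla u|^2-(\Delta u)^2\right)V\cdot\vec n\,=\,0\qquad\text{for every }V\in W^{2,\infty}(\R^d,\R^d).$$
The normal traces $V\cdot\vec n|_{\partial\Omega}$ of such fields range over a set containing every smooth function on the $C^4$ hypersurface $\partial\Omega$ (extend such a function to a tubular neighbourhood and multiply by the unit normal field), hence a dense subset of $L^1(\partial\Omega)$; as the bracketed coefficient lies in $L^\infty(\partial\Omega)$, the fundamental lemma of the calculus of variations forces it to vanish:
$$(\Delta u)^2=\frac{4\Gamma(\Omega,\tau)}{d|\Omega|}-\frac{2\tau}{d|\Omega|}\int_\Omega|\nabla u|^2=:\alpha^2\qquad\text{a.e. on }\partial\Omega.$$
In particular the right-hand side is non-negative, so $\alpha$ is a well-defined non-negative real number. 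Finally I would upgrade this to the componentwise claim: by elliptic regularity for $S_\tau$ on the $C^4$ domain $\Omega$ one has $u\in H^4(\Omega)$, and a short Schauder bootstrap (using that the right-hand side $\Gamma u+\tau\Delta u$ of $\Delta^2u=\Gamma u+\tau\Delta u$ gains regularity) yields that $\Delta u$ extends continuously to $\overline\Omega$; hence $(\Delta u)^2=\alpha^2$ holds everywhere on $\partial\Omega$, and on each connected component $\Sigma$ of $\partial\Omega$ the continuous function $\Delta u$, taking values in $\{+\alpha,-\alpha\}$, is constant equal to $+\alpha$ or to $-\alpha$ on $\Sigma$ by connectedness.

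Since the substantive computation is \eqref{eq:derivee de forme homogene}, which is already available, the only real obstacles are technical: justifying the density of $\{V\cdot\vec n|_{\partial\Omega}:V\in W^{2,\infty}(\R^d,\R^d)\}$ so that the fundamental lemma applies, and securing enough boundary regularity of $u$ for $\Delta u$ to admit a genuine pointwise trace on $\partial\Omega$ — both of which rely crucially on the $C^4$ hypothesis on $\partial\Omega$.
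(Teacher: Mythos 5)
Your proposal is correct and follows essentially the same route as the paper: pass to the scale-invariant functional $\gamma(\cdot,c^{2/d}\tau)$, set the shape derivative \eqref{eq:derivee de forme homogene} to zero with $\sigma=\tau$, deduce $(\Delta u)^2=\alpha^2$ on $\partial\Omega$, and conclude by continuity of $\Delta u$ (via elliptic regularity on the $C^4$ domain) and connectedness of each boundary component. The extra care you take with the density of normal traces and the fundamental lemma is a welcome elaboration of a step the paper leaves implicit, but it is not a different argument.
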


\begin{remarque}
When $\tau=0$, we get back the optimality condition for the clamped plate \cite[Theorem~3]{leylekian1}. When $\tau$ goes to $+\infty$, by virtue of Lemma \ref{lemme:comportement en tau}, $\alpha$ is equivalent (at least formally) to $\beta\tau^{1/2}$, where $\beta=\sqrt{\frac{2\lambda(\Omega)}{d|\Omega|}}$ is the constant in the optimality condition associated with the minimisation of the first eigenvalue of the Laplacian.
\end{remarque}

\begin{remarque}
Theorem~\ref{thm:condition optimalite} should be compared with the results in \cite[section~4]{buoso}, where a similar optimality condition is obtained, and where balls are shown to satisfy this optimality condition.
\end{remarque}

\begin{proof}
Since $\Omega$ minimises $\Gamma(\cdot,\tau)$ under the volume constraint $|\Omega|=c$, it also minimises the companion functional $\gamma(\cdot,c^{\frac{2}{d}}\tau)$, but this time without any constraint. As a result, $V\in W^{2,\infty}(\R^d,\R^d)\mapsto \tilde{\gamma}[V]=\gamma((id+V)\Omega,c^{\frac{2}{d}}\tau)$ reaches its minimum value at $V=0$. In particular, its differential must be zero. This means that for any $V\in W^{2,\infty}(\R^d,\R^d)$, the expression \eqref{eq:derivee de forme homogene} vanishes (with $\sigma=c^{\frac{2}{d}}\tau|\Omega|^{-\frac{2}{d}}=\tau$). As a result, the Laplacian of $u$ on the boundary is prescribed by
$$
(\Delta u)^2=\frac{4\Gamma(\Omega,\tau)}{d|\Omega|}-\frac{2\tau}{d|\Omega|}\int_\Omega|\nabla u|^2=\alpha^2\text{ on }\partial\Omega.
$$
But since $\Omega\in C^4$, elliptic regularity, Sobolev embeddings, and bootstrap arguments (see e.g. \cite[Lemma 1.5]{leylekian-these}) show that $u\in C^{3,\alpha}(\overline{\Omega})$ for all $0<\alpha<1$, and hence $\Delta u$ is continuous on the boundary. Therefore, it must be constant on each connected component of $\partial\Omega$, and this constant is $\pm\alpha$.
\end{proof}

Optimality conditions are traditionally used to obtain information on 
optimal shapes. This is well-known for second order operators, and some 
results have been obtained also for fourth order operators~\cite{mohr,bennett,payne-schaefer,dalmasso,willms-weinberger,stollenwerk-wagner,leylekian1}. However, we were not 
able to deduce interesting information from the optimality conditions 
here. For instance, observe that, whereas $\alpha/\Gamma(\Omega)$ does 
not depend on $\Omega$ when $\tau=0$, this is not the case otherwise. 
This prevents us from following the approach in \cite{leylekian1}.

\section{Szeg\H{o} type result}\label{sec:szego}

Let us now address the proof of our first main result, which is the Szeg\H{o}-type inequality of Theorem ~\ref{thmsz}. As already explained, the first step is to derive a one-ball problem with incomplete boundary conditions. This is the purpose of the next paragraph.

\subsection{Derivation of a one-ball problem}\label{subsec:pb 1 boule}

\begin{proposition}\label{prop:derivation pb 1 boule}
Let $\tau\geq0$ and let $\Omega$ be a bounded open set over which the first eigenfunction of $S_\tau$ is one-sign. Then,

$$
\Gamma(\Omega,\tau)\geq\min_{\substack{v\in H^2\cap H_0^1(\Omega^*)\setminus\{0\},\\v\text{ radial,}\\v\mygeq 0\text{, }\partial_nv\geq0}}\frac{\int_{\Omega^*}(\Delta v)^2+\tau\int_{\Omega^*}|\nabla v|^2}{\int_{\Omega^*}v^2}.
$$
\end{proposition}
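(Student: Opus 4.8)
The plan is to symmetrise the \emph{factorised} operator. Let $u\in H_0^2(\Omega)$ be an eigenfunction realising \eqref{eq:caracterisation variationnelle}; by hypothesis it has constant sign, so we may take $u\ge0$. Since $u\in H_0^2(\Omega)$ and $\Omega$ is bounded, an integration by parts gives
$$
\int_\Omega\Bigl(\Delta u-\tfrac{\tau}{2}u\Bigr)^2=\int_\Omega(\Delta u)^2+\tau\int_\Omega|\nabla u|^2+\frac{\tau^2}{4}\int_\Omega u^2,
$$
so that $\Gamma(\Omega,\tau)+\tau^2/4$ equals the Rayleigh quotient of $\Delta u-\tfrac{\tau}{2}u$ against $u$. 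Set $g:=-\Delta u+\tfrac{\tau}{2}u\in L^2(\Omega)$; then $g\neq0$ (otherwise $u\in H_0^2(\Omega)$ would be harmonic, hence $0$), and $u$ is the $H_0^1(\Omega)$-solution of $-\Delta u+\tfrac{\tau}{2}u=g$. Introduce the competitor $v\in H_0^1(\Omega^*)$ solving $-\Delta v+\tfrac{\tau}{2}v=g^*$ on $\Omega^*$: since $\tau/2\ge0>-\lambda(\Omega^*)$ this is well posed, elliptic regularity on the ball gives $v\in H^2(\Omega^*)$, uniqueness together with the rotational invariance of $\Omega^*$ and $g^*$ forces $v$ to be radial, and $v\neq0$ since $g^*\neq0$.

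Next I would read off the properties of $v$ from Talenti's comparison principle (Theorem~\ref{thm:talenti}) applied with $\sigma=\tau/2$, $f=g$ and the non-negative solution $u$: this yields $u^*\myleq v$. From this concentration inequality: (i) $v\mygeq u^*\ge0$, hence $v\mygeq0$; (ii) taking $r$ equal to the radius of $\Omega^*$ in \eqref{eq:concentration}, $\int_{\Omega^*}v\ge\int_{\Omega^*}u^*=\int_\Omega u$; (iii) applying the convexity comparison recalled after \eqref{eq:concentration} with $\Phi(t)=\max(t,0)^2$ (convex and non-decreasing), $\int_\Omega u^2=\int_\Omega\Phi(u)\le\int_{\Omega^*}\Phi(v)\le\int_{\Omega^*}v^2$ — it is precisely at (iii) that the convexity argument is genuinely needed, since $\myleq$ by itself does not compare $L^2$ norms. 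For the energy, $\Delta v-\tfrac{\tau}{2}v=-g^*$ and $^*$ preserves the $L^2$ norm, so $\int_{\Omega^*}(\Delta v-\tfrac{\tau}{2}v)^2=\int_\Omega g^2=\int_\Omega(\Delta u-\tfrac{\tau}{2}u)^2$; expanding the left-hand side by the same integration by parts as above (the boundary term vanishes since $v=0$ on $\partial\Omega^*$) and using (iii), one obtains
$$
\frac{\int_{\Omega^*}(\Delta v)^2+\tau\int_{\Omega^*}|\nabla v|^2}{\int_{\Omega^*}v^2}
=\frac{\int_\Omega(\Delta u-\tfrac{\tau}{2}u)^2}{\int_{\Omega^*}v^2}-\frac{\tau^2}{4}
\le\frac{\int_\Omega(\Delta u-\tfrac{\tau}{2}u)^2}{\int_\Omega u^2}-\frac{\tau^2}{4}=\Gamma(\Omega,\tau).
$$

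It remains to verify the boundary condition $\partial_n v\ge0$, which is the crux of the proposition: the factorisation trick reproduces the Dirichlet condition $v=0$ on $\partial\Omega^*$ but not the clamped condition $\partial_n v=0$. I would get it by integrating the equation for $v$ over $\Omega^*=B_R$ and using Green's formula together with the radial symmetry of $v$:
$$
-|\partial B_R|\,\partial_n v+\frac{\tau}{2}\int_{\Omega^*}v=\int_{\Omega^*}g^*=\int_\Omega g=\frac{\tau}{2}\int_\Omega u,
$$
where the last equality uses $\int_\Omega\Delta u=0$, valid for $u\in H_0^2(\Omega)$ with $\Omega$ bounded. Hence $|\partial B_R|\,\partial_n v=\tfrac{\tau}{2}\bigl(\int_{\Omega^*}v-\int_\Omega u\bigr)\ge0$ by (ii). Thus $v$ belongs to the admissible class, so the minimum on the right-hand side of the proposition is at most the displayed quotient, i.e.\ at most $\Gamma(\Omega,\tau)$, which is the claim. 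The main obstacle is exactly this last step: because one symmetrises $-\Delta u+\tfrac{\tau}{2}u$ rather than $u$ itself, the second boundary condition degrades to the one-sided constraints $v\mygeq0$ and $\partial_n v\ge0$, and recovering $\partial_n v=0$ (equivalently $v\in H_0^2(\Omega^*)$) is precisely what has to be postponed to §~\ref{subsec:pb 1 boule complet}; note that when $\tau=0$ the identity above already forces $\partial_n v=0$, which recovers Szeg\H{o}'s original argument.
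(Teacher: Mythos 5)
Your proposal is correct and follows essentially the same route as the paper: the same factorisation $(\Delta-\tau/2)^2$, the same competitor $v$ solving $-\Delta v+\tfrac{\tau}{2}v=((-\Delta+\tfrac{\tau}{2})u)^*$ on $\Omega^*$, Talenti's comparison (Theorem~\ref{thm:talenti}) for $v\mygeq u^*$ and the $L^2$ bound, and the same divergence-theorem identity for $\partial_n v\geq0$. The only (harmless) omission is that, since the right-hand side is stated as a \emph{minimum}, one should also note that the admissible class is weakly closed so the infimum is attained, as the paper does at the end of its proof.
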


\begin{proof}
We shall assume without loss of generality that the first eigenfunction $u$ is non-negative. Recall that since $\Delta^2-\tau\Delta=(\Delta-\tau/2)^2-\tau^2/4$, the eigenfunction $u$ satisfies
\begin{equation}\label{eq:factorisation}
(\Delta-\tau/2)^2u=(\Gamma+\tau^2/4)u,
\end{equation}
which justifies to introduce $\gamma:=\Gamma+\tau^2/4$. Here $\Gamma$ stands for $\Gamma(\Omega,\tau)$. Since $\Gamma$ is the smallest eigevalue of $\Delta^2-\tau\Delta$, we deduce that $\gamma$ is the smallest eigenvalue of $(\Delta-\tau/2)^2$, with $u$ as an associated eigenfunction. In other words,
$$
\gamma=\min_{u\in H_0^2(\Omega)\setminus\{0\}}\frac{\int_\Omega[(\Delta-\tau/2)u]^2}{\int_\Omega u^2},
$$
and $u$ is a minimiser of this quotient. To build a competitor for $u$ in $\Omega^*$, we now symmetrise \eqref{eq:factorisation} in the following way. We put $f:=(-\Delta +\tau/2) u$ and let $v$ solve
\begin{equation*}
\left\{
\begin{array}{rcll}
(-\Delta +\tau/2) v & = & f^* & \text{in }\Omega,\\
v & = & 0 &\text{on }\partial\Omega..
\end{array}
\right.
\end{equation*}
By elliptic regularity, $v$ belongs to $H^2\cap H_0^1(\Omega^*)$. Talenti's comparison principle with zero order term (Theorem~\ref{thm:talenti}), which can be applied since $u\geq0$, then indicates that $v\mygeq u^*$. As a result, not only
$$
\int_\Omega u^2\leq\int_{\Omega^*} v^2,
$$
but also $\partial_n v\geq0$, since by the divergence theorem
$$
\int_{\partial\Omega^*}\partial_n v=-\int_{\Omega^*}f^*+\frac{\tau}{2}\int_{\Omega^*}v=-\int_{\Omega}f+\frac{\tau}{2}\int_{\Omega^*}v=\int_{\partial\Omega}\partial_n u+\frac{\tau}{2}\(\int_{\Omega^*}v-\int_{\Omega}u\right)\geq0.
$$
Consequently, we obtain the lower bound
$$
\gamma\geq \frac{\int_{\Omega^*}[(\Delta-\tau/2)v]^2}{\int_{\Omega^*} v^2}=\frac{\int_{\Omega^*}(\Delta v)^2+\tau\int_{\Omega^*}|\nabla v|^2}{\int_{\Omega^*} v^2}+\tau^2/4.
$$
Since $v\mygeq u^*\geq0$ and $\partial_n v\geq0$, this is the desired result. Note that the minimum is achieved since $\{v\in H_0^1\cap H^2(\Omega^*)\text{ radial}:v\mygeq 0\text{, }\partial_n v\geq0\}$ is weakly closed in $H^2(\Omega^*)$. Indeed, $v\in H^2(A)\mapsto \partial_n v\in H^1(\partial A)$ is a compact operator whenever $A$ is, just like $\Omega^*$, a bounded regular open set.
\end{proof}

\begin{remarque}\label{rmq:echec FK avec fct propre signee}
Let us briefly discuss what would happen if the eigenfunction $u$ were changing sign. In that situation, one would need to apply the previous symmetrisation procedure separately to the restrictions $u|_{\Omega_+}$ and $(-u)|_{\Omega_-}$ of the eigenfunction on the positive and the negative nodal domains $\Omega_\pm=\{\pm u>0\}$. As a result, we would end up with two competitors $v\in H_0^1\cap H^2(\Omega_+^*)$ and $w\in H_0^1\cap H^2(\Omega_-^*)$, defined such that
$$
(-\Delta +\tau/2)v=[(-\Delta+\tau/2)u|_{\Omega_+}]^*,\qquad (-\Delta +\tau/2)w=[(-\Delta+\tau/2)(-u)|_{\Omega_-}]^*.
$$
Then, Theorem \ref{thm:talenti} would still guarantee $v\mygeq u|_{\Omega_+}^*\geq0$ and $w\mygeq(-u)|_{\Omega_-}^*\geq 0$. However the previous incomplete boundary condition would now become:
$$
\int_{\Omega_+^*}(-\Delta +\tau/2)v-\int_{\Omega_-^*}(-\Delta +\tau/2)w=\int_{\Omega}(-\Delta +\tau/2)u,
$$
or in other words
$$
\partial_n v|\partial\Omega_+^*|-\partial_n w|\partial\Omega_-^*|=\frac{\tau}{2}\left[\int_{\Omega_+^*}v-\int_{\Omega_-^*}w -\int_\Omega u\right].
$$
We now see that the sign of the term inside the brackets above cannot 
be determined from the information available, since the relations $v\mygeq u|_{\Omega_+}^*$ and $w\mygeq (-u)|_{\Omega_-}^*$ do not combine in an appropriate way. Therefore, unlike with traditional two-ball problems (see \cite[equation (30)]{ashbaugh-benguria}), we were unable to deduce (even an incomplete version of) the coupling condition between $\partial_nv$ and $\partial_nw$. Overall, this prevented us to address the case of a signed eigenfunction $u$.
\end{remarque}

\subsection{Completion of the one-ball problem}\label{subsec:pb 1 boule complet}

After the derivation of the incomplete one-ball problem, the purpose is now to complete the boundary condition in order to retrieve the variational characterization in \eqref{eq:caracterisation variationnelle}. This is performed below.

\begin{proposition}\label{prop:pb 1 boule}
Let $B$ be an open ball. Any solution of
\begin{equation}\label{eq:pb 1 boule}
\min_{\substack{v\in H^2\cap H_0^1(B)\setminus\{0\},\\v\text{ radial,}\\v\mygeq 0\text{, }\partial_nv\geq0}}\frac{\int_{B}(\Delta v)^2+\tau\int_{B}|\nabla v|^2}{\int_{B}v^2}
\end{equation}
belongs to $H_0^2(B)$.
\end{proposition}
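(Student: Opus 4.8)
The plan is to argue by contradiction: suppose a minimiser $v$ of~\eqref{eq:pb 1 boule} does not lie in $H_0^2(B)$, which, given that $v\in H^2\cap H_0^1(B)$ is radial, means precisely that $\partial_n v\not\equiv 0$ on $\partial B$; by the constraint $\partial_n v\geq 0$ and continuity of the trace this forces $\partial_n v>0$ on $\partial B$, i.e. the constraint $\partial_n v\geq0$ is \emph{not active} at $v$. First I would write down the Euler--Lagrange conditions for the constrained minimisation problem. The objective is the Rayleigh quotient for $(\Delta-\tau/2)^2$ (up to the additive constant $\tau^2/4$), so after normalising $\int_B v^2=1$ the functional to minimise is $v\mapsto \int_B[(\Delta-\tau/2)v]^2$ over $\{v\in H^2\cap H_0^1(B)\text{ radial}: v\mygeq0,\ \partial_n v\geq0\}$. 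The key structural observation is that near our putative minimiser the constraint $\partial_n v\geq 0$ plays no role (it is strictly satisfied), so for perturbations $v+t\varphi$ with $\varphi\in H^2\cap H_0^1(B)$ radial and $\varphi\mygeq 0$ small enough the perturbed function still satisfies both constraints; combined with perturbations that decrease the concentration we should be able to conclude that $v$ in fact minimises the Rayleigh quotient over the \emph{whole} space $H^2\cap H_0^1(B)$ of radial functions — equivalently (by symmetry of the problem, the full minimiser over $H^2\cap H_0^1(B)$ is radial) over all of $H^2\cap H_0^1(B)$.

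Next I would identify that unconstrained minimiser. Minimising $\int_B[(\Delta-\tau/2)v]^2/\int_B v^2$ over $v\in H^2\cap H_0^1(B)$ gives the first eigenvalue of $(\Delta-\tau/2)^2$ under \emph{Navier} (Steklov-type) boundary conditions $v=0$, $(\Delta-\tau/2)v=0$ on $\partial B$; indeed the natural boundary condition arising from the free variation of $\partial_n v$ is exactly $(\Delta-\tau/2)v=0$ on $\partial B$. Call this minimiser $v_0$. On the ball, the corresponding problem decouples into the system $-\Delta v_0 + (\tau/2)v_0 = \sqrt{\gamma_0}\, z$, $-\Delta z + (\tau/2) z = \sqrt{\gamma_0}\, v_0$ with $v_0=z=0$ on $\partial B$ (here $\gamma_0$ is the Navier eigenvalue), and the radial first eigenfunction $v_0$ of such a second-order-type problem can be taken positive in $B$, with Hopf's lemma giving $\partial_n v_0<0$ on $\partial B$. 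This is the contradiction: $v_0$, being a minimiser of the quotient over the constrained class as well, would have to satisfy $\partial_n v_0\geq 0$, but in fact $\partial_n v_0<0$. Hence the constraint $\partial_n v\geq 0$ \emph{must} be active at every minimiser $v$ of~\eqref{eq:pb 1 boule}, which is to say $\partial_n v\equiv 0$, i.e. $v\in H_0^2(B)$.

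Let me record the main steps in order. Step 1: normalise and set up the constrained variational problem for $F(v)=\int_B[(\Delta-\tau/2)v]^2$ subject to $\int_B v^2=1$, $v$ radial in $H^2\cap H_0^1(B)$, $v\mygeq 0$, $\partial_n v\geq 0$; note existence of a minimiser $v$ is already granted by Proposition~\ref{prop:derivation pb 1 boule}'s proof (the admissible set is weakly closed). Step 2: show $v$ can be taken $\geq 0$ (standard: replacing $v$ by $v^*$ via the same symmetrisation of $(-\Delta+\tau/2)v$ does not increase $F$ and preserves the constraints), so the concentration constraint $v\mygeq 0$ is in fact automatically implied and inactive in the relevant directions. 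Step 3: assume for contradiction $v\notin H_0^2(B)$, so $\partial_n v>0$ on $\partial B$ and the remaining constraint is inactive; deduce $v$ minimises $F$ over all radial $v\in H^2\cap H_0^1(B)$ with $\int_B v^2=1$, hence over all of $H^2\cap H_0^1(B)$ (the unconstrained minimiser being radial by uniqueness/symmetrisation). Step 4: the Euler--Lagrange equation is $(\Delta-\tau/2)^2 v=\gamma_0 v$ in $B$ with $v=0$ and $(\Delta-\tau/2)v=0$ on $\partial B$; solve (or just analyse) this on the ball, write $v$ as first eigenfunction, and invoke positivity-of-the-ground-state plus Hopf to get $\partial_n v<0$ on $\partial B$. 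Step 5: contradiction with $\partial_n v\geq 0$; conclude $\partial_n v\equiv0$ and $v\in H_0^2(B)$.

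The main obstacle I anticipate is Step 3–4: carefully justifying that when the normal-derivative constraint is inactive the minimiser satisfies the Navier-type natural boundary condition $(\Delta-\tau/2)v=0$ on $\partial B$, and then establishing that the ground state of this Navier problem on the ball is of one sign with strictly negative outward normal derivative. The one-sign property is not automatic for fourth-order problems in general, but here the factorisation $(\Delta-\tau/2)^2$ with $\tau\geq 0$ reduces the Navier problem to an iterated second-order problem with the \emph{positive} zero-order coefficient $\tau/2$, for which the maximum principle and Hopf's lemma do apply componentwise — this is exactly where the hypothesis $\tau\geq 0$ is used, and it is the delicate point to get right. A secondary technicality is making the variational perturbation argument in Step 3 rigorous despite the one-sided (inequality) constraints: the resolution is that inactivity of $\partial_n v\geq 0$ gives a genuine two-sided freedom in those directions, so no Lagrange multiplier sign obstruction remains and one genuinely lands on the unconstrained critical point equation.
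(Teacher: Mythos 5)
Your overall strategy (assume $\partial_n v>0$, extract the natural Navier-type boundary condition from the inactive normal-derivative constraint, and reach a contradiction) is the right one, but there is a genuine gap at Steps 2--3, and it is precisely the point the paper's proof is built to avoid. Step 2 is circular: Talenti's comparison principle with a zero-order term (Theorem~\ref{thm:talenti}) has the non-negativity of the function being compared as a \emph{hypothesis}, so you cannot invoke it to replace a possibly sign-changing constrained minimiser $v$ by a non-negative one. Worse, if Step 2 were available the whole proposition would be immediate — a radial $v\geq 0$ with $v=0$ on $\partial B$ automatically has $\partial_n v\leq 0$, hence $\partial_n v=0$ — which signals that something is off. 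In fact, under the contradiction hypothesis $\partial_n v>0$ the minimiser \emph{must} change sign (it is negative in an annulus $(\rho,1)$; this is the content of Lemma~\ref{lemme:un seul zero}), so the concentration constraint $v\mygeq 0$ cannot be discarded. That constraint is a continuum of one-sided conditions $\int_{B_r}v\geq 0$, and some of them are generically active (e.g.\ $\int_{B_r}v=0$ for $r$ in an inner ball $B_\eta$). Hence inactivity of $\partial_n v\geq 0$ alone does \emph{not} let you conclude in Step 3 that $v$ minimises the quotient over all of $H^2\cap H_0^1(B)$: you only have two-sided freedom in directions supported where $\int_{B_r}v>0$ strictly, and one-sided freedom (non-negative perturbations) elsewhere. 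What you legitimately obtain is the boundary condition $\Delta v=0$ on $\partial B$ together with the differential \emph{inequality} $\Delta^2 v-\tau\Delta v-\Gamma v\geq 0$ in $B$ (and an Euler--Lagrange equation with a Lagrange multiplier only in the outer annulus), not the Navier eigenvalue equation; $v$ need not be the Navier ground state, so Steps 4--5 do not apply to it.

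The paper's proof closes exactly this gap. First, Lemma~\ref{lemme:un seul zero} (a radial rearrangement of $|\partial_r v|$ in the spirit of Nadirashvili) establishes that $v$ is non-negative on $[0,\rho]$ and negative on $(\rho,1)$, so that $\int_{B_r}v>0$ on some $(\eta,1)$; this identifies where two-sided perturbations are admissible and yields $\Delta v=0$ on $\partial B$. Then, instead of identifying $v$ with a Navier eigenfunction, one perturbs by non-negative $h\in H_0^2(B)$ (one-sided, $t>0$) to get $\Delta^2 v-\tau\Delta v-\Gamma v\geq 0$ in $B$, factorises $\Delta^2-\tau\Delta-\Gamma=(\Delta-\alpha_+)(\Delta-\alpha_-)$ with $\alpha_+\geq 0\geq\alpha_-$, and applies the maximum principle to $w:=(\Delta-\alpha_-)v$, which vanishes on $\partial B$ thanks to $v=\Delta v=0$ there. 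This gives $w\leq 0$, whence $0\geq\int_B w=\int_B\Delta v-\alpha_-\int_B v\geq\int_{\partial B}\partial_n v$, contradicting $\partial_n v>0$. Note that this route needs neither the positivity of the Navier ground state nor Hopf's lemma, and it uses $\tau\geq 0$ only through the sign of $\alpha_\pm$. If you wish to keep your architecture, you would need to supply substitutes for Lemma~\ref{lemme:un seul zero} and for the one-sided variational inequality; the claim ``the constraints are inactive, hence $v$ is the unconstrained minimiser'' cannot stand as written.
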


\noindent
Thanks to this proposition, the proof of Theorem \ref{thmsz} follows in a straightforward way.

\begin{proof}[Proof of Theorem \ref{thmsz}]
Combine Proposition \ref{prop:derivation pb 1 boule} and Proposition \ref{prop:pb 1 boule}.
\end{proof}

To prove Proposition \ref{prop:pb 1 boule}, one shall consider without loss of generality that $B$ is the unit ball. Then, any minimiser $v$, being radially symmetric, can be thought as a function of the one-dimensional radial variable $r\in[0,1)$. Furthermore, $v$ belongs to $C^1(\overline{B}\setminus\{0\})$ (this follows from the Radial Lemma in \cite{strauss} applied to $v(r)-(r-1)\partial_r v(1)$, which is in $H_0^2(B)\subseteq H^2(\R^d)$). As a first step, let us prove that $v$ changes sign in at most one point $\rho\in(0,1)$.

\begin{lemme}\label{lemme:un seul zero}
Let $v$ be a minimiser of \eqref{eq:pb 1 boule} such that $\partial_nv>0$. Then, there exists $\rho\in(0,1)$ such that $v$ is non-negative in $[0,\rho]$ and negative in $(\rho,1)$.
\end{lemme}

\begin{proof}
First, $v$ admits zeros in $(0,1)$, otherwise it would be non-negative (since $v\mygeq0$), contradicting $\partial_nv>0$. Let $0<\rho<1$ be the greatest of those zeros. It does exist, otherwise there would be a sequence of zeros converging to $1$. As a result, we would have $\partial_n v=0$ (here we use that $v\in C^1(\overline{B}\setminus\{0\})$, as justified above), contradicting once again the assumption. So the sequence of zeros converges to some radius $\rho\in(0,1)$, which is itself a zero by continuity of $v$. So the greatest zero $\rho$ exists, and $v<0$ in $(\rho,1)$.

We now consider the continuous function $w$ such that $-\partial_r w=|\partial_r v|$ over $[0,\rho)$ and $w=v$ over $[\rho,1)$ (see Figure \ref{fig:argument nadirashvili}). The function $w$ then satisfies $|\partial_r w|=|\partial_r v|$, $|\Delta v|=|\Delta w|$, and $|w|\geq|v|$ (for $r<\rho$, this follows from $w(r)=-\int_r^\rho\partial_r w\geq\left|-\int_r^\rho \partial_r v\right|=|v(r)|$) with equality if and only if  $v$ is monotonous in $[0,\rho)$. As a result $w$ is a competitor which makes the quotient decrease (a contradiction) except if $v$ is monotonous in $[0,\rho)$. In this last situation, $v$ is one-sign over $[0,\rho)$, and it must be non-negative, otherwise the assumption $v\mygeq0$ would fail.
\end{proof}

\begin{figure}
\includegraphics{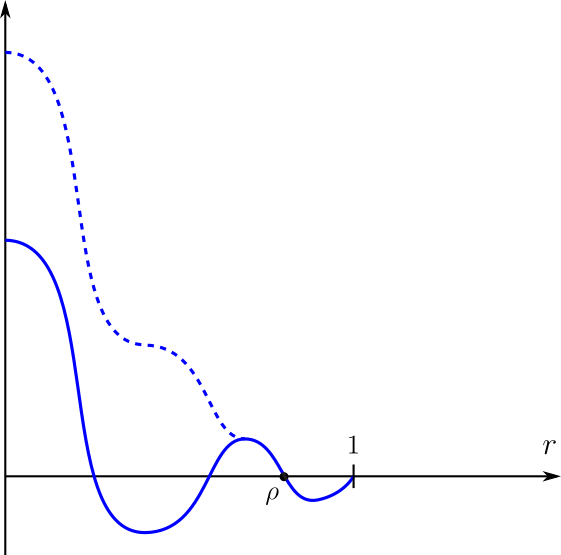}
\caption{Radial profiles of the minimiser $v$ of the incomplete one-ball problem (solid line), and the competitor $w$ constructed in the proof of Lemma \ref{lemme:un seul zero} (dotted line).}
\label{fig:argument nadirashvili}
\end{figure}

\begin{remarque}\label{rmq:nadirashvili}
The construction of the competitor $w$ in the above proof was inspired by \cite[equation (3.12)]{nadirashvili} (see also \cite[Remark 3.2]{ashbaugh-bucur-laugesen-leylekian}).
\end{remarque}

As a consequence of Lemma \ref{lemme:un seul zero}, the function $r\mapsto \int_{B_r}v$ is non-decreasing in $(0,\rho)$ and decreasing in $(\rho,1)$. Since it is non-negative (by assumption), this means that it is positive in $(\eta,1)$ for some $\eta\in[0,\rho)$. With this observation in mind, one may prove Proposition \ref{prop:pb 1 boule}.

\begin{proof}[Proof of Proposition \ref{prop:pb 1 boule}]
We assume by contradiction that $\partial_n v>0$. To begin, take $h\in H^2(B)$ radially symmetric with support in $(\eta, 1)$, and such that $\int_B h=0$. By definition of $\eta$ (see above), $v+th$ is an admissible function for any small enough $t\in\R$ (it is central here to allow $t$ to be negative). The first variation of the quotient then shows that
$$
\Delta^2 v-\tau\Delta v=\Gamma v+K\quad\text{in }B\setminus\overline{B_\eta},
$$
where $\Gamma$ is the (non-negative) value of the minimum in \eqref{eq:pb 1 boule}, and $K$ is some Lagrange multiplier coming from the constraint $\int_B h=0$. For the boundary conditions, take now a function $h$ with support in $(\eta,1]$ (and still with zero mean value). The normal derivative of $h$ may not be zero, but $v+th$ is still admissible for small $t\in\R$ since $\partial_nv>0$. The boundary term in the first variation now yields
$$
\Delta v =0\text{ on }\partial B.
$$
But we can actually do the same with a non-negative function $h\in H_0^2(B)$. Indeed, for any small $t>0$ (and here it is central that $t$ is positive), the function $v+th$ is still admissible. In this case, the first variation gives
$$
\Delta^2 v-\tau\Delta v-\Gamma v\geq0\text{ in }B.
$$
Now recall the factorisation $\Delta^2-\tau\Delta-\Gamma=(\Delta-\alpha_+)(\Delta-\alpha_-)$, where
$$
\alpha_+:=\tau/2+\sqrt{\Gamma+\tau^2/4}\geq 0,\qquad\alpha_-:=\tau/2-\sqrt{\Gamma+\tau^2/4}\leq0.
$$ 
So if we define $w:=(\Delta-\alpha_-)v$, the function $w$ satisfies
$$
\left\{
\begin{array}{rcll}
(\Delta-\alpha_+) w & \geq & 0 & \text{in }B,\\
w & = & 0 & \text{on }\partial B.
\end{array}
\right.
$$
The boundary condition for $w$ comes from the fact that $v=\Delta v=0$. By the maximum principle (recall that $\alpha_+\geq0$), we deduce that $w\leq0$ in $B$. Since $\alpha_-\leq0$ and $\int_B v\geq0$, the result is unequivocal:
$$
0\geq\int_Bw=\int_{B}\Delta v-\alpha_-\int_B v\geq \int_{\partial B}\partial_n v.
$$
This contradicts the assumption $\partial_n v>0$. Therefore, we must have $\partial_n v=0$ and $v\in H_0^2(B)$.
\end{proof}

\section{Saint--Venant-type result}\label{sec:torsion}

In this section, we consider the torsional rigidity for $S_{\tau}$, focused towards the proof of the Saint-Venant result given by Theorem~\ref{thmsv}. The first step of the proof (§~\ref{subsec:pb 2 boules complet}) consists in the derivation of a two-ball problem, and the completion of the boundary coupling condition. Then, the sensitivity of the two-ball problem with respect to the relative size of the two balls is investigated in §~\ref{subsec:pb 2 boules}. Putting all together provides the proof of Theorem~\ref{thmsv} in §~\ref{subsec:saint-venant}. To conclude, we study the torsion function and the torsional rigidity in the case of balls (§~\ref{subsec:torsion}). This complements Theorem~\ref{thmsv}, by making the bound on the torsional rigidity explicit.

\subsection{Derivation and completion of the two-ball problem}\label{subsec:pb 2 boules complet}

As explained in the introduction, the two-ball problem which is first obtained contains an incomplete coupling condition. This follows from a hybrid symmetrisation procedure applied in the positive and the negative nodal domains, as shown in the next proposition.

\begin{proposition}\label{prop:derivation pb 2 boules}
Let $\Omega$ be an open subset of $\R^d$ of finite measure and let $\tau\geq0$. Let $W$ be the torsion function \eqref{torsion-eq} over $\Omega$, and consider the balls $B_a=\{W>0\}^*$ and $B_b=\{W<0\}^*$. Then,
$$
-T(\Omega,\tau)\geq \min_{\substack{(v,w)\in\mathcal{H}_{a,b}\\\int_{\partial B_a}\partial_n v\geq\int_{\partial B_b}\partial_n w}}\int_{B_a}(\Delta v)^2+\tau\int_{B_a}|\nabla v|^2-2\int_{B_a} v+\int_{B_b}(\Delta w)^2=:E_{inc}(a,b,\tau),
$$
where $\mathcal{H}_{a,b}$ denotes the space of pairs $(v,w)$ of radial functions over $B_a$ and $B_b$ respectively, vanishing at the boundary, and which are more concentrated than zero. That is,
$$
\mathcal{H}_{a,b}:=\{(v,w)\text{, }v\in H_0^1\cap H^2(B_a)\text{ and }w\in H_0^1\cap H^2(B_b)\text{ radial functions, }v,w\mygeq0\}.
$$
\end{proposition}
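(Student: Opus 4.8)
The plan is to run a \emph{hybrid} symmetrisation directly on the torsion function $W$ of \eqref{torsion-eq}: the factorised procedure used for Theorem~\ref{thmsz} on its positive part, and the classical Talenti symmetrisation on its negative part. Since $\tau\geq 0>-\Lambda(\Omega)$ the energy in \eqref{eq:caracterisation variationnelle torsion} is coercive and attained at $W$, so $-T(\Omega,\tau)=\int_\Omega(\Delta W)^2+\tau\int_\Omega|\nabla W|^2-2\int_\Omega W$. Write $W_+:=\max(W,0)$ and $W_-:=\max(-W,0)$. Interior elliptic regularity makes $W$ smooth in $\Omega$, and on $\{W=0\}$ one has $\nabla W=0$ and $\Delta W=0$ a.e.\ (a standard property of $H^2$ functions; inserting this into the equation even forces $|\{W=0\}|=0$, whence $|B_a|+|B_b|=|\Omega|$). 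The energy therefore splits over $\Omega_+=\{W>0\}$ and $\Omega_-=\{W<0\}$, giving $-T(\Omega,\tau)=E_++E_-$ with
$$E_{+}:=\int_{\Omega_{+}}(\Delta W)^{2}+\tau\int_{\Omega_{+}}|\nabla W|^{2}-2\int_{\Omega_{+}}W,\qquad E_{-}:=\int_{\Omega_{-}}(\Delta W_{-})^{2}+\tau\int_{\Omega_{-}}|\nabla W_{-}|^{2}+2\int_{\Omega_{-}}W_{-}.$$

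I would then bound the two pieces separately. For $E_-$, simply drop the two non-negative lower-order terms, $E_-\geq\int_{\Omega_-}(\Delta W_-)^2$, and symmetrise in Talenti's fashion: with $g:=-\Delta W_-\in L^2(\Omega_-)$ let $w$ solve $-\Delta w=g^*$ in $B_b$, $w=0$ on $\partial B_b$; elliptic regularity gives $w\in H^2\cap H_0^1(B_b)$, Theorem~\ref{thm:talenti} with $\sigma=0$ (applicable since $W_-\geq0$) gives $w\mygeq W_-^*\geq0$, and since rearrangement preserves the $L^2$ norm of the datum, $\int_{B_b}(\Delta w)^2=\int_{\Omega_-}(\Delta W_-)^2$; hence $E_-\geq\int_{B_b}(\Delta w)^2$. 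For $E_+$, first complete the square, using $\int_{\Omega_+}W\Delta W=-\int_{\Omega_+}|\nabla W|^2$, to get $E_+=\int_{\Omega_+}\big(\Delta W-\tfrac{\tau}{2}W\big)^2-\tfrac{\tau^2}{4}\int_{\Omega_+}W^2-2\int_{\Omega_+}W$; then set $f:=(-\Delta+\tfrac{\tau}{2})W\in L^2(\Omega_+)$ and let $v$ solve $(-\Delta+\tfrac{\tau}{2})v=f^*$ in $B_a$, $v=0$ on $\partial B_a$, so $v\in H^2\cap H_0^1(B_a)$. Theorem~\ref{thm:talenti}, this time with $\sigma=\tau/2\geq0$ (applicable since $W\geq0$ on $\Omega_+$), yields $v\mygeq W_+^*\geq0$, hence, using $\tau\geq0$, both $\int_{B_a}v\geq\int_{\Omega_+}W$ and $\int_{B_a}v^2\geq\int_{\Omega_+}W^2$, while $\int_{B_a}(\Delta v-\tfrac{\tau}{2}v)^2=\int_{\Omega_+}f^2$. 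Undoing the square on $B_a$ (where $v\in H_0^1$) gives $E_+\geq\int_{B_a}(\Delta v)^2+\tau\int_{B_a}|\nabla v|^2-2\int_{B_a}v$, and adding the two estimates produces exactly the energy in the statement.

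It then remains to check that $(v,w)$ is an admissible pair. Radiality, the vanishing Dirichlet traces and $v,w\mygeq0$ are built into the construction, so $(v,w)\in\mathcal{H}_{a,b}$. For the coupling inequality, the divergence theorem gives $\int_{\partial B_a}\partial_n v=\int_{B_a}\Delta v=\tfrac{\tau}{2}\big(\int_{B_a}v-\int_{\Omega_+}W\big)+\int_{\Omega_+}\Delta W$ and $\int_{\partial B_b}\partial_n w=\int_{B_b}\Delta w=-\int_{\Omega_-}g=-\int_{\Omega_-}\Delta W=\int_{\Omega_+}\Delta W$, where we used $g=-\Delta W_-=\Delta W$ on $\Omega_-$ and $\int_\Omega\Delta W=0$ (the clamped boundary conditions kill the flux) together with $\Delta W=0$ a.e.\ on $\{W=0\}$. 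Since $\int_{B_a}v\geq\int_{\Omega_+}W$ and $\tau\geq0$, the first expression is $\geq\int_{\Omega_+}\Delta W$, which equals the second; hence $\int_{\partial B_a}\partial_n v\geq\int_{\partial B_b}\partial_n w$, which closes the argument.

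The genuine difficulty is not in these computations but in the Sobolev bookkeeping behind the nodal splitting: to invoke Theorem~\ref{thm:talenti} on $\Omega_\pm$ and to legitimise the integrations by parts $\int_{\Omega_\pm}W\Delta W=-\int_{\Omega_\pm}|\nabla W|^2$ one needs $W|_{\Omega_+}\in H_0^1(\Omega_+)$ and $W_-|_{\Omega_-}\in H_0^1(\Omega_-)$, together with square-integrability of $\Delta W$ on each nodal piece (immediate from $W\in H^2(\Omega)$). Since nodal sets need not be regular, this is the point requiring care; it follows from $W_\pm\in H_0^1(\Omega)$ and the fact that a non-negative $H_0^1$ function restricts to an $H_0^1$ function on its own positivity set, as in \cite{ashbaugh-bucur-laugesen-leylekian}.
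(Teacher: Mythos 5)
Your proposal is correct and follows essentially the same route as the paper's proof: the same hybrid symmetrisation (factorised Talenti comparison with zero-order term $\tau/2$ on $\Omega_+$, classical Talenti on $\Omega_-$, discarding the lower-order terms of $W_-$), the same use of Theorem~\ref{thm:talenti} for the comparisons $\int W_+\leq\int v$, $\int W_+^2\leq\int v^2$, and the same divergence-theorem computation for the incomplete coupling condition. The only differences are cosmetic — you obtain $w\mygeq W_-^*$ via Theorem~\ref{thm:talenti} with $\sigma=0$ where the paper invokes the pointwise version, and you are more explicit about the nodal-domain Sobolev issue ($W_\pm\in H_0^1(\Omega_\pm)$), which the paper uses implicitly.
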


\begin{remarque}\label{rmq:Ha,b}
Let us agree that when $a=0$, $(v,w)\in\mathcal{H}_{a,b}$ means that $v\equiv0$ and $\partial_n v=0$, whereas when $b=0$ it means that $w\equiv0$ and $\partial_n w=0$. With this convention, the above result holds even in these degenerate cases.
\end{remarque}

\begin{remarque}
An inspection of the proof shows that one can actually take $w\geq0$ instead of $w\mygeq0$. But since this does not really help in the remaining of the section (see however Lemma~\ref{lemme:pas de zero}), we only mention it as a remark.
\end{remarque}

\begin{proof}
Recall that $T(\Omega,\tau)$ admits the variational formulation \eqref{eq:caracterisation variationnelle torsion}, with the torsion function $W$ as a minimiser. Let us denote
$W_+=W|_{\Omega_+}$ and $W_-=-W|_{\Omega_-}$ the positive and negative parts of $W$. As already explained, the idea is now to factorise (only!) the positive part in the energy, in
order to display the term $(\Delta-\tau/2)W_+$:
\begin{align}\label{eq:factorisation rigidité torsionnelle}
\begin{split}
-T(\Omega,\tau) = & \int_{\Omega_+}\left(\Delta W_+-\frac{\tau}{2}W_+\right)^2-\tau^2/4\int_{\Omega_+} W_+^2-2\int_{\Omega_+} W_+\\
& \hspace*{5mm} + \int_{\Omega_-}(\Delta W_-)^2+\tau\int_{\Omega_-}|\nabla W_-|^2+2\int_{\Omega_-} W_-.
\end{split}
\end{align}
As usual, $\Omega_\pm=\{\pm W>0\}$ denote the nodal domains of $W$. Now, write $f=-\Delta W_++\frac{\tau}{2}W_+$ and $g=-\Delta W_-$, and consider the competitors $v$ and $w$ defined by
\begin{equation*}
\left\{
\begin{array}{rcll}
-\Delta v+\frac{\tau}{2}v & = & f^* & \text{in }B_a,\\
v & = & 0 & \text{on }\partial B_a,
\end{array}
\right.
\qquad\text{and}\qquad
\left\{
\begin{array}{rcll}
-\Delta w & = & g^* & \text{in }B_b,\\
w & = & 0 & \text{on }\partial B_b,
\end{array}
\right.
\end{equation*}
where $B_a=\Omega_+^*$ and $B_b=\Omega_-^*$. This symmetrisation procedure is said to be hybrid, for it involves different operators in the different nodal domains. We now apply Talenti's comparison principle (Theorem~\ref{thm:talenti}) to compare $W_+$ and $v$, which gives:
\begin{align*}
\int_{\Omega_+}W_+\leq\int_{B_a} v,\quad\int_{\Omega_+}W_+^2\leq\int_{B_a} v^2.
\end{align*}
This allows to bound from below the terms in $W_+$ in \eqref{eq:factorisation rigidité torsionnelle}. On the other hand, applying the standard version of Talenti's comparison principle \cite[Theorem 3.1.1]{kesavan}, we obtain the a.e. pointwise comparison $W_-^*\leq w$. Unfortunately, this does not allow us to address the terms with lower order derivatives of $W_-$ in \eqref{eq:factorisation rigidité torsionnelle}, and these lower order terms must simply be discarded. Putting everything together, we obtain
$$
-T(\Omega,\tau)\geq\int_{B_a}(\Delta v)^2+\tau\int_{B_a}|\nabla v|^2-2\int_{B_a} v+\int_{B_b}(\Delta w)^2.
$$
Now it remains to check that the pair $(v,w)$ is admissible for $E_{inc}(a,b,\tau)$. First, by elliptic regularity, we have $v\in H_0^1\cap H^2(B_a)$ and $w\in H_0^1\cap H^2(B_b)$. Second, $v\mygeq W_+^*\geq0$ and $w\geq W_-^*\geq0$. Lastly, by equimeasurability of the Schwarz symmetrisation, we have
$$
\int_{B_a}\left(-\Delta v+\frac{\tau}{2}v\right)-\int_{B_b}(-\Delta w)=-\int_\Omega\Delta W+\frac{\tau}{2}\int_{\Omega_+}W_+,
$$
or equivalently
$$
\int_{\partial B_a}\partial_n v-\int_{\partial B_b}\partial_n w=\int_{\partial\Omega}\partial_nW+\frac{\tau}{2} \left(\int_{B_a}v-\int_{\Omega_+}W_+\right).
$$
Since $\partial_nW=0$ and since $v\mygeq W_+^*$, the right-hand side is non-negative. This shows that $(v,w)$ is admissible as expected. To conclude, let us mention that the minimisation problem is well-posed. This follows from the fact that $\{(v,w)\in\mathcal{H}_{a,b}:\int_{B_a}\Delta v\geq\int_{B_b}\Delta w\}$ is weakly closed in $H^2(B_a)\times H^2(B_b)$.
\end{proof}

Proposition \ref{prop:derivation pb 2 boules} gives a lower bound on (minus) the torsional rigidity in terms of a two-ball problem $E_{inc}(a,b,\tau)$ with the incomplete boundary coupling condition
$$
\int_{\partial B_a}\partial_n v\geq\int_{B_b}\partial_n w,
$$
for an admissible pair $(v,w)\in \mathcal{H}_{a,b}$. Our goal is now to show that $E_{inc}(a,b,\tau)$ is in some sense equivalent to the two-ball problem with completed coupling condition
\begin{equation}\label{eq:pb 2 boules complet}
E(a,b,\tau)=\min_{\substack{(v,w)\in\mathcal{H}_{a,b}\\\int_{\partial B_a}\partial_n v=\int_{\partial B_b}\partial_n w}}\int_{B_a}(\Delta v)^2+\tau\int_{B_a}|\nabla v|^2-2\int_{B_a} v+\int_{B_b}(\Delta w)^2.
\end{equation}
Actually, the next result shows that $E_{inc}(a,b,\tau)$ can be bounded from below  by the smallest of either  $E(a,b,\tau)$ or the torsional rigidity over $B_a$.

\begin{proposition}\label{prop:derivation pb 2 boule complet}
Let $a,b\geq0$. Then,
$$
E_{inc}(a,b,\tau)\geq\min\left\{E(a,b,\tau),-T(B_a,\tau)\right\}.
$$
\end{proposition}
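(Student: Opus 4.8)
The plan is to analyse a minimiser $(v,w)\in\mathcal{H}_{a,b}$ of $E_{inc}(a,b,\tau)$ and to argue by dichotomy on whether the incomplete coupling constraint $\int_{\partial B_a}\partial_n v\geq\int_{\partial B_b}\partial_n w$ is saturated at the minimiser. If it is saturated, i.e. $\int_{\partial B_a}\partial_n v=\int_{\partial B_b}\partial_n w$, then $(v,w)$ is admissible for the completed problem $E(a,b,\tau)$, so $E_{inc}(a,b,\tau)=E(a,b,\tau)\geq\min\{E(a,b,\tau),-T(B_a,\tau)\}$ and we are done. Hence the substance of the proof lies in the complementary case, where the inequality is strict at the minimiser; I would then show that in this regime the constraint is locally inactive, so that $(v,w)$ is in fact a free critical point of the functional over all of $\mathcal{H}_{a,b}$ (with only the concentration conditions $v,w\mygeq0$ still possibly binding), and I want to conclude that the energy then equals $-T(B_a,\tau)$, which forces $w\equiv 0$ and $b$ (as it enters the energy) to be irrelevant.

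First I would record existence of a minimiser $(v,w)$, which is already noted after Proposition~\ref{prop:derivation pb 2 boules} (the constraint set is weakly closed in $H^2(B_a)\times H^2(B_b)$ and the functional is coercive and weakly lower semicontinuous). Next, assuming the strict inequality $\int_{\partial B_a}\partial_n v>\int_{\partial B_b}\partial_n w$, I would perform first-variation arguments. Varying $w$ alone by a radial test function $h\in H^2(B_b)$: since the coupling constraint is slack and only $w\mygeq 0$ remains, one obtains that $w$ solves $\Delta^2 w=0$ on the region where the concentration constraint is inactive, together with the natural boundary condition $\Delta w=0$ on $\partial B_b$ (coming from the boundary term in the first variation, admissible because $\partial_n w$ is unconstrained when the coupling is slack). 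Combined with $w=0$ on $\partial B_b$ and a Nadirashvili-type rearrangement argument (as in Lemma~\ref{lemme:un seul zero}) to control the sign structure forced by $w\mygeq 0$, this should pin down $w\equiv 0$: a nonzero biharmonic radial function vanishing with zero Laplacian on the boundary and concentrated above zero cannot exist, or if it could it would not lower the energy. Once $w\equiv 0$, the coupling condition reads $\int_{\partial B_a}\partial_n v\geq 0$, and varying $v$ freely (again the coupling being slack) shows $v$ minimises $\int_{B_a}(\Delta v)^2+\tau\int_{B_a}|\nabla v|^2-2\int_{B_a}v$ over $H_0^1\cap H^2(B_a)$ radial with $v\mygeq 0$; after completing the boundary condition by the argument of Proposition~\ref{prop:pb 1 boule} (same maximum-principle factorisation $\Delta^2-\tau\Delta=(\Delta-\alpha_+)(\Delta-\alpha_-)$) one gets $v\in H_0^2(B_a)$, whence $v=W_{B_a}$, the torsion function of $B_a$, and the energy equals $-T(B_a,\tau)$.

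The main obstacle I anticipate is making the "slack constraint $\Rightarrow$ free critical point" step fully rigorous in the presence of the \emph{two} remaining inequality constraints $v\mygeq 0$ and $w\mygeq 0$, which are themselves only incompletely understood. The delicate point is that one-sided variations are all that is available where $\int_{B_r}v$ or $\int_{B_r}w$ vanishes, and extracting from these one-sided Euler–Lagrange inequalities the right differential inequalities (and then the right sign of $\partial_n v$, $\partial_n w$ via the divergence theorem) requires care — essentially a repetition of the machinery of §\ref{subsec:pb 1 boule complet}, but now carried through for a pair rather than a single function, and tracking how the Lagrange multiplier attached to the (now inactive) coupling constraint degenerates. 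A secondary subtlety is handling the degenerate endpoints $a=0$ or $b=0$ consistently with the conventions of Remark~\ref{rmq:Ha,b}; I would treat these as easy special cases at the outset (when $a=0$ the left side is $\int_{B_b}(\Delta w)^2\geq 0=-T(B_0,\tau)$, and when $b=0$ the two problems literally coincide).
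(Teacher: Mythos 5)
Your overall architecture is right and, once repaired, essentially collapses onto the paper's proof, but there is a genuine gap exactly at the step you flag, and your dichotomy does not quite match the structure of the problem. The paper splits on the sign of $\partial_n v$ rather than on whether the coupling constraint is saturated, and this is not cosmetic. In your ``slack'' case, the free variation of $w$ is legitimate only once the constraint $w\mygeq0$ is strictly inactive everywhere, i.e.\ once $w>0$ on $B_b$. The paper achieves this by the rearrangement of Lemma~\ref{lemme:pas de zero} (replace $\partial_r w$ by $-|\partial_r w|$, as in Lemma~\ref{lemme:un seul zero}), and that construction yields a \emph{strictly} positive minimiser only when $\partial_n w<0$. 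So the chain ``slack $\Rightarrow$ vary $w$ freely $\Rightarrow$ $\Delta^2w=0$ in $B_b$, $w=\Delta w=0$ on $\partial B_b$ $\Rightarrow$ $w\equiv0$'' is available precisely when $\partial_n w<0$, and there its conclusion is not ``$w\equiv0$, proceed to the one-ball problem'' but a \emph{contradiction} with $\partial_n w<0$: the slack case with $\partial_n w<0$ simply does not occur (this is Proposition~\ref{prop:v'<0}). When instead $\partial_n w\geq0$, no variational argument for $w$ is needed or available: slackness gives $\int_{\partial B_a}\partial_n v>\int_{\partial B_b}\partial_n w\geq0$, hence $\partial_n v>0$, so $(v,0)$ is admissible and does not increase the energy, and you land directly in the one-ball problem $T_{inc}$, whose completion to $-T(B_a,\tau)$ is Proposition~\ref{prop:v'>0}. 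By contrast, your fallback of extracting ``$\Delta^2 w=0$ on the region where the concentration constraint is inactive'' from one-sided variations and then deducing $w\equiv0$ globally would not close: the active set could be large and you would have no equation there, and tracking a degenerating multiplier for the inactive coupling constraint buys you nothing.

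Two smaller points. Your endpoint $b=0$ is not as easy as claimed: with the convention of Remark~\ref{rmq:Ha,b} the incomplete constraint becomes $\int_{\partial B_a}\partial_n v\geq0$ while the completed one becomes $\int_{\partial B_a}\partial_n v=0$, so the two problems do not ``literally coincide''; identifying $E_{inc}(a,0,\tau)=T_{inc}$ with $-T(B_a,\tau)$ still requires the full completion argument of Proposition~\ref{prop:v'>0}. Also, the test functions perturbing $w$ must lie in $H_0^1\cap H^2(B_b)$ (they must vanish on $\partial B_b$ to preserve the Dirichlet condition); only their normal derivative is free, and it is this freedom, together with the slack coupling, that produces the natural condition $\Delta w=0$ on $\partial B_b$.
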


\begin{remarque}\label{rmq:torsion d'une boule vide}
Note that when $a=0$ or $b=0$, we define~\eqref{eq:pb 2 boules complet} according to the convention of Remark~\ref{rmq:Ha,b}. Similarly,
we make the convention that $T(B_a,\tau)=0$ when $a=0$, which makes the proposition hold even in this case.
\end{remarque}

To prove this proposition, we found more convenient to distinguish two different situations, depending on the minimisers of $E_{inc}(a,b,\tau)$. Indeed, for a minimising pair $(v,w)\in\mathcal{H}_{a,b}$, we will address the cases $\partial_n v\geq0$ and $\partial_nv<0$ separately. The case $\partial_n v\geq0$ appears to be simpler. In this situation, it is clear that $w\equiv0$ is admissible, and that this choice minimises the energy. Therefore we have
$$
E_{inc}(a,b,\tau)\geq\min_{\substack{v\in H_0^1\cap H^2(B_a),\\v\text{ radial,}\\v\mygeq0\text{, }\partial_nv\geq0}}\int_{B_a}(\Delta v)^2+\tau\int_{B_a}|\nabla v|^2-2\int_{B_a} v.
$$
The right-hand side is a problem which is very similar to that addressed in Proposition \ref{prop:pb 1 boule}, and it can be treated accordingly. For the case $\partial_n v<0$, the constraint yields $\partial_n w<0$. Thanks to these conditions, we will prove that both $v$ and $w$ are positive, and this will ultimately allow us to conclude that $\int_{\partial B_a}\partial_n v=\int_{\partial B_b}\partial_n w$.

\subsection*{Case 1: $\partial_n v\geq0$}\label{subsec:cas 1}

As explained above, in this situation we assume $w$ to vanish identically, and the problem boils down to the study of
\begin{equation}\label{eq:rigidité torsionnelle incomplete}
T_{inc}:=\min_{\substack{v\in H_0^1\cap H^2(B_a),\\v\text{ radial,}\\v\mygeq0\text{, }\partial_nv\geq0}}\int_{B_a}(\Delta v)^2+\tau\int_{B_a}|\nabla v|^2-2\int_{B_a} v,
\end{equation}
with the convention $T_{inc}=0$ whenever $a=0$. Thus we focus on the case $a>0$. First, copying word by word the proof of Lemma \ref{lemme:un seul zero} (the only thing to change is to replace the radius $1$ by the radius $a$), we obtain the following:

\begin{lemme}\label{lemme:un seul zero torsion}
Let $v$ be a minimiser of $T_{inc}$ such that $\partial_n v>0$. Then, there exists $\rho\in(0,a)$ such that $v$ is non-negative in $[0,\rho]$ and negative in $(\rho,a)$.
\end{lemme}

As a consequence, we notice here again that $\int_{B_r} v$ is positive for all $r\in(\eta,a)$ for some $\eta\in[0,\rho)$. This observation allows to prove the analogue of Proposition \ref{prop:pb 1 boule}:

\begin{proposition}\label{prop:v'>0}
If $a>0$, any minimiser of $T_{inc}$ is in $H_0^2(B_a)$.
\end{proposition}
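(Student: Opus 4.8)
The plan is to follow the proof of Proposition~\ref{prop:pb 1 boule} essentially verbatim, replacing the eigenvalue right-hand side $\Gamma v$ by the constant $1$, and the factorisation $\Delta^2-\tau\Delta-\Gamma=(\Delta-\alpha_+)(\Delta-\alpha_-)$ by the trivial one $\Delta^2-\tau\Delta=\Delta(\Delta-\tau)$, so that now $\alpha_-=0$ and $\alpha_+=\tau\geq0$. Taking $B_a$ to be the ball of radius $a$, I argue by contradiction, assuming $\partial_n v>0$ for a minimiser $v$ of $T_{inc}$. Lemma~\ref{lemme:un seul zero torsion} provides a single sign change at some $\rho\in(0,a)$, whence $r\mapsto\int_{B_r}v$ is non-negative, increases on $(0,\rho)$ and decreases on $(\rho,a)$; in particular $\int_{B_r}v>0$ for $r$ in some interval $(\eta,a)$ with $\eta\in[0,\rho)$, which is the only structural fact about $v$ that will be used beyond its admissibility.

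Next come the usual three first-variation arguments. First, testing against radial $h\in H^2(B_a)$ supported in $(\eta,a)$ with $\int_{B_a}h=0$ — for which the constraint $v+th\mygeq0$ remains satisfied for $|t|$ small of either sign (on the support $\int_{B_r}v$ is bounded away from $0$, elsewhere $h$ contributes nothing), while $\partial_n(v+th)=\partial_n v>0$ since $h$ vanishes near $\partial B_a$ — the vanishing of the first variation gives the Euler--Lagrange equation $\Delta^2v-\tau\Delta v=K$ in $B_a\setminus\overline{B_\eta}$ for a Lagrange multiplier $K$, hence $v\in C^\infty(\overline{B_a}\setminus\overline{B_\eta})$ by elliptic regularity. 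Second, testing against radial $h$ supported in $[\eta',a]$ for some $\eta'\in(\eta,\rho)$, vanishing at $r=a$ with arbitrary normal derivative there and still with $\int_{B_a}h=0$: here $v+th$ stays admissible for $|t|$ small precisely because the strict inequality $\partial_n v>0$ controls the perturbation near $\partial B_a$, and the boundary term of the first variation forces $\Delta v=0$ on $\partial B_a$. Third, testing against $h\in H_0^2(B_a)$ radial with $h\geq0$, which keeps $v+th$ admissible for $t>0$ small (using $v\mygeq0$, $h\geq0$ and $\partial_n h=0$), one-sided minimality yields the differential inequality $\Delta^2v-\tau\Delta v\geq1$ in $B_a$ in the weak sense.

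To conclude, set $w:=\Delta v$. The third step gives $(\Delta-\tau)w=\Delta^2v-\tau\Delta v\geq1\geq0$ in $B_a$, while $w=\Delta v=0$ on $\partial B_a$ by the second step; since $\tau\geq0$, the maximum principle for $-\Delta+\tau$ yields $w\leq0$ in $B_a$. Therefore
$$
\int_{\partial B_a}\partial_n v=\int_{B_a}\Delta v=\int_{B_a}w\leq0,
$$
contradicting $\partial_n v>0$. Hence $\partial_n v=0$, and since $v\in H^2(B_a)$ with $v=\partial_n v=0$ on $\partial B_a$, this means $v\in H_0^2(B_a)$. The main point requiring care — exactly as in Proposition~\ref{prop:pb 1 boule} — is the admissibility of the perturbations $v+th$ against the \emph{incomplete} constraint $v+th\mygeq0$: this is what dictates using supports inside $(\eta,a)$, where $\int_{B_r}v$ is bounded away from $0$, and allowing $t$ of both signs in the first two steps while restricting to $t>0$ in the third. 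The low regularity of $v$ near the centre of the ball is otherwise harmless, the interior equation of the first step supplying all the smoothness needed near $\partial B_a$.
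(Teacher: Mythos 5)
Your proof is correct and takes essentially the same route as the paper's, which is itself only a sketch deferring to Proposition~\ref{prop:pb 1 boule}: the same three first-variation steps (interior Euler--Lagrange equation on the annulus, the boundary condition $\Delta v=0$ on $\partial B_a$, and the one-sided inequality $\Delta^2 v-\tau\Delta v\geq 1$), followed by the maximum principle for $-\Delta+\tau$ applied to $w=\Delta v$ to contradict $\partial_n v>0$. Your added care about the admissibility of the perturbations against the incomplete constraint $v+th\mygeq 0$ is exactly the point the paper leaves implicit.
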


\begin{proof}
Let us sketch the argument, and refer to the proof of Proposition~\ref{prop:pb 1 boule} for more details. We assume by contradiction that a minimiser $v$ satisfies $\partial_n v>0$. By Lemma \ref{lemme:un seul zero torsion} and the subsequent remark, we shall use a radial test function supported in $(\eta,a)$ with zero mean value to perturb $v$. In this way, we get
$$
\Delta^2v-\tau\Delta v=constant\text{ in }B_a\setminus\overline{B_\eta}.
$$
Taking then a test function with support in $(\eta,a]$, we end up with the boundary condition
$$
\Delta v=0\text{ on }\partial B_a.
$$
Lastly, considering a non-negative radial test function in $H_0^2(B_a)$, we also have
$$
\Delta^2 v-\tau\Delta v\geq 1.
$$
Combine the new boundary condition $\Delta v=0$ and the maximum principle for the operator $-\Delta+\tau$ to conclude that $-\Delta v\geq0$. This contradicts the assumption $\partial_n v>0$.
\end{proof}

\subsection*{Case 2: $\partial_n v<0$}\label{subsec:cas 2}

In this situation, the first step is, in the wake of Lemma \ref{lemme:un seul zero}, to prove that we can assume $v$ and $w$ to be positive, up to changing the minimising pair. This is performed in the following lemma. Note that, due to our convention, $a$ and $b$ must here both be positive.

\begin{lemme}\label{lemme:pas de zero}
Let $(v,w)$ be a minimising pair for $E_{inc}(a,b,\tau)$, and assume that $\partial_n v$ and $\partial_n w$ are negative. Then, there exist positive minimisers $\tilde{v}$ and $\tilde{w}$
of $E_{inc}(a,b,\tau)$ with the same boundary conditions.
\end{lemme}

\begin{proof}
Consider the radial function $\tilde{v}$ such that $\tilde{v}=0$ on $\partial B_a$ and $\partial_r\tilde{v}=-|\partial_r v|$. This function is admissible and does not increase the energy (see
the proof of Lemma~\ref{lemme:un seul zero} for more details). Furthermore, $\partial_n\tilde{v}=-|\partial_n v|=\partial_n v<0$. Lastly, $\tilde{v}$ is non-increasing and has negative normal
derivative at the boundary, hence it is positive. A similar construction for $w$ provides $\tilde{w}$.
\end{proof}

With this new information, we shall complete the coupling condition in $E_{inc}(a,b,\tau)$ and prove the following result.

\begin{proposition}\label{prop:v'<0}
Let $(v,w)$ be a minimising pair for $E_{inc}(a,b,\tau)$, and assume $\partial_n v$ to be negative. Then,
$$
\int_{\partial B_a}\partial_n v=\int_{\partial B_b}\partial_n w.
$$
\end{proposition}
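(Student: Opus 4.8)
The plan is to argue by contradiction, assuming that the coupling constraint is strict at a minimising pair, i.e. $\int_{\partial B_a}\partial_n v>\int_{\partial B_b}\partial_n w$, and to derive the Euler--Lagrange system it must then satisfy. By Lemma~\ref{lemme:pas de zero} we may assume $v>0$ on $B_a$ and $w>0$ on $B_b$, and by hypothesis $\partial_n v<0$, hence also $\partial_n w<0$ by the (assumed strict form of the) constraint being inactive. Since the constraint is not saturated, small perturbations of $(v,w)$ in $\mathcal{H}_{a,b}$ in \emph{either} direction remain admissible as long as they respect $v,w\mygeq 0$; but because $v,w>0$ with strictly negative normal derivative, the concentration constraints $v\mygeq 0$, $w\mygeq 0$ are also inactive (the map $r\mapsto\int_{B_r}v$ is strictly positive on $(0,a)$ and likewise for $w$), so in a neighbourhood of the minimiser the only remaining constraints are $v,w\in H_0^1$. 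Thus $(v,w)$ is a free critical point of the energy over $H_0^1\cap H^2(B_a)\times H_0^1\cap H^2(B_b)$ (within the radial class, but since the energy and the Navier-type natural boundary conditions are rotationally invariant this is the same as the full free problem).

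Next I would write out what this critical-point property gives. Varying $v$ with an arbitrary $h\in H^2(B_a)$ vanishing on $\partial B_a$ but with $\partial_n h$ free, the bulk term yields $\Delta^2 v-\tau\Delta v=1$ in $B_a$ and the boundary term yields the natural condition $\Delta v=0$ on $\partial B_a$. Varying $w$ similarly gives $\Delta^2 w=0$ in $B_b$ and $\Delta w=0$ on $\partial B_b$. So $v$ is the solution of the torsion problem for $S_\tau$ on $B_a$ under \emph{Navier} conditions $v=\Delta v=0$, and $w$ solves the biharmonic problem with Navier conditions, which forces $w\equiv 0$ (the only solution of $\Delta^2 w=0$, $w=\Delta w=0$ on a ball). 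But $w\equiv 0$ contradicts $\partial_n w<0$. Hence no minimising pair with $\partial_n v<0$ can have the coupling constraint inactive, which is exactly the claim $\int_{\partial B_a}\partial_n v=\int_{\partial B_b}\partial_n w$.

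There is one subtlety I would need to handle carefully: the constraint $v\mygeq 0$ (and $w\mygeq 0$) is a one-sided constraint, so a priori the first variation only gives a \emph{variational inequality}, not an equality, and the argument above relied on these constraints being inactive. I would make this precise exactly as sketched: since (after Lemma~\ref{lemme:pas de zero}) $v$ is positive and non-increasing with $v(a)=0$ and $\partial_r v(a)<0$, the function $r\mapsto\int_{B_r}v=c_d\int_0^r s^{d-1}v(s)\,ds$ is strictly positive on $(0,a]$, so for any radial $h\in H^2(B_a)$ the perturbation $v+th$ still satisfies $v+th\mygeq 0$ for $|t|$ small; similarly for $w$. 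Thus both signs of $t$ are allowed and one gets genuine Euler--Lagrange \emph{equations} with no multiplier attached to the concentration constraints. The coupling constraint $\int_{\partial B_a}\partial_n v\geq\int_{\partial B_b}\partial_n w$ being strict by assumption is likewise inactive, so it contributes no multiplier either.

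The main obstacle I anticipate is being scrupulous about the regularity and the boundary-term bookkeeping when integrating by parts twice in the first variation: one must justify that a minimiser of a quadratic functional over $H^2$ with only the $H_0^1$ (Dirichlet, not clamped) constraint enjoys enough elliptic regularity ($v\in H^4_{\mathrm{loc}}$, with traces of $\Delta v$ well defined on $\partial B_a$) for the natural boundary condition $\Delta v=0$ to be read off, and to confirm that restricting to radial test functions loses nothing because all the data are radial. Once that is in place, the contradiction via $w\equiv 0$ is immediate. I would also remark, as the paper's phrasing ``the situation is slightly more complicated'' hints, that the honest statement must allow the degenerate outcome of Case~1 (where $w\equiv 0$ is optimal and one reduces to $T_{inc}$ via Proposition~\ref{prop:v'>0}); here we are precisely in the complementary case $\partial_n v<0$, where that degeneracy is excluded and the clean equality holds.
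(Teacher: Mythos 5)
Your proposal is correct and follows essentially the same route as the paper: assume the coupling inequality is strict, use Lemma~\ref{lemme:pas de zero} to make $w>0$ so that all constraints are inactive, derive the free Euler--Lagrange system $\Delta^2 w=0$ in $B_b$ with the natural condition $\Delta w=0$ on $\partial B_b$, and conclude $w\equiv 0$, contradicting $\partial_n w<0$. The only difference is cosmetic: the paper perturbs $w$ alone and explicitly notes that the positivity of $v$ and its Euler--Lagrange equation are irrelevant, whereas you also derive them; this is harmless but unnecessary.
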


\begin{proof}
Suppose by contradiction that $\int_{\partial B_a}\partial_n v>\int_{\partial B_b}\partial_n w$. Since we assumed $\partial_n v<0$, we also get $\partial_n w<0$. By Lemma \ref{lemme:pas de zero},
up to changing the minimising pair, we shall consider that $w>0$. Note that we may also assume that $v>0$ but this will be irrelevant. Indeed, we now use an arbitrary radial function $h\in
H_0^1\cap H^2(B_b)$ to perturb the minimiser $w$. Since $w>0$, for any small enough $t\in\R$, the function $w+th$ is still admissible. As a result (see again the proof of Proposition~\ref{prop:pb 1 boule} for more details), we obtain that
$$
\Delta^2 w=0\text{ in }B_b,
$$
with the additional boundary condition
$$
\Delta w=0\text{ on }\partial B_b.
$$
This yields $w\equiv0$, contradicting $\partial_n w<0$.
\end{proof}

\subsection*{Completion of the coupling condition}

Combining Propositions~\ref{prop:v'>0} and~\ref{prop:v'<0}, we can complete the coupling condition in problem $E_{inc}(a,b,\tau)$, which amounts to proving Proposition \ref{prop:derivation pb 2 boule complet}.

\begin{proof}[Proof of Proposition \ref{prop:derivation pb 2 boule complet}]
Let $(v,w)$ be a minimising pair for $E_{inc}(a,b,\tau)$. If $\partial_n v\geq0$, we are in \hyperref[subsec:cas 1]{case 1}, hence $E_{inc}(a,b,\tau)\geq T_{inc}=-T(B_a,\tau)$, the last equality following from Proposition~\ref{prop:v'>0}, when $a>0$ (and from the convention $T(B_a,\tau)=T_{inc}=0$ when $a=0$). If on the contrary $\partial_nv<0$, we fall in \hyperref[subsec:cas 2]{case 2}, and by virtue of Proposition \ref{prop:v'<0}, we obtain that $E_{inc}(a,b,\tau)= E(a,b,\tau)$. Overall,
$$
E_{inc}(a,b,\tau)\geq\min(-T(B_a,\tau),E(a,b,\tau)).
$$
\end{proof}

\subsection{Resolution of the two-ball problem}\label{subsec:pb 2 boules}

The last step in the proof of Theorem~\ref{thmsv} concerns the analysis of the two-ball problem with completed coupling condition $E(a,b,\tau)$ defined in~\eqref{eq:pb 2 boules complet}. The goal is to show that, under the constraint $|B_a|+|B_b|=constant$, the energy of the two-ball problem is minimal when $b=0$, for which one retrieves the torsional rigidity $T(B_a,\tau)$. As usual, it is enough to restrict to the case $|B_a|+|B_b|=|B_1|$, or in other words $a^d+b^d=1$. In this setting, we show that $E(a,(1-a^d)^{1/d},\tau)$ decreases with respect to $a\in(0,1)$. This is the purpose of the next proposition (see also Figure \ref{fig:pb2boules torsion}).

\begin{proposition}\label{prop:pb 2 boules torsion}
Let $\tau\geq0$ and introduce the notation $E(a,\tau):=E(a,(1-a^d)^{1/d},\tau)$ where $E(a,b,\tau)$ is defined in~\eqref{eq:pb 2 boules complet}. Then, for all positive values of $a$ and $b$
we have
\begin{subnumcases}{\label{eq:pb 2 boules torsion explicite, tout tau} E(a,b,\tau)=}
-\frac{|B_a|}{d\tau^2}\left[\frac{\left(d-a\sqrt{\tau}\frac{I_\nu}{I_{\nu+1}}(a\sqrt{\tau})\right)^2}{d\frac{a^d}{b^d}+a\sqrt{\tau}\frac{I_\nu}{I_{\nu+1}}(a\sqrt{\tau})}+\left(d-a\sqrt{\tau}\frac{I_\nu}{I_{\nu+1}}(a\sqrt{\tau})+\frac{a^2\tau}{d+2}\right)\right] & if $\tau>0$,\label{eq:pb 2 boules torsion explicite}\\
-\frac{a^4|B_a|}{d(d+2)^2}\left(\fr{1}{d+4}+\fr{1}{d(1+a^d/b^d)}\right) & if $\tau=0$.\label{eq:pb 2 boules torsion explicite tau=0}
\end{subnumcases}
Furthermore, for all $a\in(0,1)$,
$$
\partial_aE(a,\tau)<0
$$
and hence
$$
E(a,\tau)> E(1,\tau)=-T(B_1,\tau).
$$
\end{proposition}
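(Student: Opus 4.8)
The plan is to split the proof into two parts: first establishing the explicit formula~\eqref{eq:pb 2 boules torsion explicite, tout tau} for $E(a,b,\tau)$ by solving the completed two-ball problem explicitly, and then differentiating the one-parameter expression $E(a,\tau)=E(a,(1-a^d)^{1/d},\tau)$ in $a$ and checking its sign.

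For the first part, I would start from the observation that in the completed problem~\eqref{eq:pb 2 boules complet} the coupling constraint $\int_{\partial B_a}\partial_n v=\int_{\partial B_b}\partial_n w$ ties the two balls together only through a single scalar, the common flux $\Phi:=\int_{\partial B_a}\partial_n v=\int_{\partial B_b}\partial_n w$. Given $\Phi$, the two subproblems decouple: on $B_b$ one minimises $\int_{B_b}(\Delta w)^2$ over radial $w\in H_0^1\cap H^2(B_b)$ with $w\mygeq0$ and prescribed flux $\Phi$; on $B_a$ one minimises $\int_{B_a}(\Delta v)^2+\tau\int_{B_a}|\nabla v|^2-2\int_{B_a}v$ over radial $v$ with $v\mygeq0$ and prescribed flux. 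In the relevant regime (which, by the case analysis of §\ref{subsec:pb 2 boules complet}, is $\partial_nv,\partial_nw<0$, so both $v,w>0$ and the concentration constraints are inactive) the Euler--Lagrange equations are $\Delta^2 w=0$ on $B_b$ and $\Delta^2v-\tau\Delta v=1$ on $B_a$, each with Navier-type conditions $w=\Delta w=0$, $v=0$, $\Delta v=$ const on the respective boundaries, the constant being a Lagrange multiplier for the flux constraint. Solving these ODEs explicitly — the $B_b$ problem gives a biharmonic radial profile, while the $B_a$ problem is solved in terms of modified Bessel functions $I_\nu, I_{\nu+1}$ (with the torsion function of Appendix §\ref{subsec:torsion} as the ``free'' part) — and then optimising the resulting energy, which is a quadratic in $\Phi$, over $\Phi$ yields the closed form~\eqref{eq:pb 2 boules torsion explicite}. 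The case $\tau=0$ is the degenerate limit and gives~\eqref{eq:pb 2 boules torsion explicite tau=0}; I would either take $\tau\to0$ in the formula or redo the computation with polynomials directly. One must also check that the computed minimiser indeed satisfies the concentration conditions $v,w\mygeq0$ — this follows since, being positive and radially monotone, they dominate zero on every centered ball.

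For the second part, substitute $b=(1-a^d)^{1/d}$, so that $a^d/b^d=a^d/(1-a^d)$ and $|B_a|=a^d|B_1|$, into~\eqref{eq:pb 2 boules torsion explicite, tout tau}, obtaining $E(a,\tau)$ as an explicit elementary-plus-Bessel function of $a\in(0,1)$. The goal is $\partial_aE(a,\tau)<0$ throughout. I would abbreviate $t:=a\sqrt\tau$ and $\phi(t):=t\,\frac{I_\nu}{I_{\nu+1}}(t)$, record the monotonicity/convexity properties of $\phi$ that follow from standard Bessel identities (e.g. $\phi$ is increasing, $\phi(t)\to d$ as $t\to0^+$, and the recurrence $I_\nu'=I_{\nu+1}+\frac{\nu}{t}I_\nu$ gives $\phi'$ in closed form), and then differentiate. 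The $a$-dependence enters both through the prefactor $a^d$ and through $t$ and through the ratio $a^d/b^d$; after factoring out the manifestly negative $-|B_1|/(d\tau^2)$, one is left to show a certain combination of $\phi$, $\phi'$ and rational functions of $a^d$ is positive. At the endpoints one checks $E(1,\tau)=-T(B_1,\tau)$ (consistency with §\ref{subsec:torsion}, recovering the torsion function formula there) and that $E(a,\tau)\to0$ as $a\to0^+$.

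The main obstacle I expect is precisely this monotonicity computation: the derivative $\partial_aE(a,\tau)$ is a somewhat unwieldy expression mixing the Bessel quotient $\phi(a\sqrt\tau)$, its derivative, and rational functions of $a^d$, and showing it is negative for \emph{all} $a\in(0,1)$ and \emph{all} $\tau\ge0$ uniformly is delicate — one cannot simply sign each term. The natural strategy is to reduce to a one-variable inequality in $t=a\sqrt\tau$ (or treat $a^d$ and $t$ as semi-independent and optimise), using sharp two-sided bounds on $\phi(t)=t I_\nu(t)/I_{\nil+1}(t)$ of the form $\phi(t)\in(d, d+\frac{t^2}{d+2})$ and the known convexity of such ratios (cf.\ Turán-type inequalities for modified Bessel functions). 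A clean way to organise it is to verify the inequality at $\tau=0$ (where it reduces to the rational inequality coming from~\eqref{eq:pb 2 boules torsion explicite tau=0}, easily checked) and then show the $\tau$-derivative, or equivalently the $t$-derivative of the bracket in~\eqref{eq:pb 2 boules torsion explicite}, has a fixed sign, again via Bessel recurrences. Once $\partial_aE(a,\tau)<0$ is in hand, $E(a,\tau)>E(1,\tau)=-T(B_1,\tau)$ is immediate, completing the proof.
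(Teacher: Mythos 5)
Your first step --- reducing the completed two-ball problem to a single scalar (the common flux), decomposing the minimiser on $B_a$ as the clamped torsion function plus a multiple of the homogeneous profile with unit normal derivative, solving the radial Euler--Lagrange equations in terms of $I_\nu$, $I_{\nu+1}$ and a biharmonic profile on $B_b$, and minimising the resulting quadratic in the flux --- is exactly the paper's derivation of \eqref{eq:pb 2 boules torsion explicite}, so that part is sound.

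The gap is the second part. You correctly identify $\partial_aE(a,\tau)<0$ as the crux, but you do not prove it: you list candidate strategies (two-sided bounds on $\phi(t)=tI_\nu(t)/I_{\nu+1}(t)$, Turán-type inequalities, checking $\tau=0$ and controlling a $\tau$-derivative) without carrying any of them out, and you concede the derivative "cannot simply be signed term by term". Moreover, the last suggested scheme is logically off: knowing $\partial_aE(a,0)<0$ and that the $t$-derivative of the bracket in \eqref{eq:pb 2 boules torsion explicite} has a fixed sign would give monotonicity of $E$ in $\tau$, not the sign of $\partial_aE$ for all $\tau$; for that you would need a sign on the mixed derivative $\partial_\tau\partial_aE$, a different and unsubstantiated claim. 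The paper avoids inequality-chasing altogether: it differentiates the intermediate expression \eqref{eq:pb 2 boules torsion explicite partiel} rather than the final Bessel formula, using the shape-derivative identity $\frac{d}{da}\int_{B_a}v_a=\frac{1}{|\partial B_a|}\left(\int_{B_a}h_a\right)^2$ and a scaling identity for $\frac{d}{da}\int_{\partial B_a}\Delta h_a$, then eliminates $\int_{\partial B_a}\Delta h_a$ and $\int_{B_a}(\Delta h_a)^2$ in favour of $\int_{B_a}h_a$ and $\int_{B_a}h_a^2$ via the pointwise relation $\Delta h_a=c+\tau h_a$. After substituting the explicit Bessel values of these two integrals, the derivative collapses \emph{exactly} to
\begin{equation*}
\partial_aE(a,\tau)=\left\{d^2a^2\left(1+2a^d/b^d\right)+d\sqrt{\tau}\,a^3\tfrac{I_\nu}{I_{\nu+1}}(a\sqrt{\tau})\right\}\times(\text{positive factor})\times\int_{B_a}h_a,
\end{equation*}
and the sign is read off from $h_a<0$. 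The point is that the expression factors identically, with $\int_{B_a}h_a$ as the unique negative factor; no estimate on the Bessel quotient is required. Until you exhibit this factorisation (or an equivalent positivity certificate) your argument establishes the formula \eqref{eq:pb 2 boules torsion explicite, tout tau} but not the monotonicity, which is the substantive content of the proposition. The treatment of $\tau=0$ by passing to the limit is fine once the case $\tau>0$ is closed.
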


\begin{remarque}
Observe that when $a\to0$ in \eqref{eq:pb 2 boules torsion explicite, tout tau}, we have $E(a,b,\tau)\to0$, while when $b\to0$ we obtain (see \eqref{eq:rigidité torsionnelle de la boule} and \eqref{eq:rigidité torsionnelle de la boule tau=0} below),
$$
E(a,b,\tau)\to
\left.
\begin{cases*}
-\frac{|B_a|}{d\tau^2}\left(d-a\sqrt{\tau}\frac{I_\nu}{I_{\nu+1}}(a\sqrt{\tau})+\frac{a^2\tau}{d+2}\right), & for $\tau>0$\\
-\frac{a^4|B_a|}{d(d+2)^2(d+4)}, & for $\tau=0$
\end{cases*}
\right\}=-T(B_a,\tau)
$$
\end{remarque}

\begin{figure}[htbp]
    \centering
    \begin{subfigure}[t]{0.9\textwidth}
        \centering
        \includegraphics[width=\textwidth]{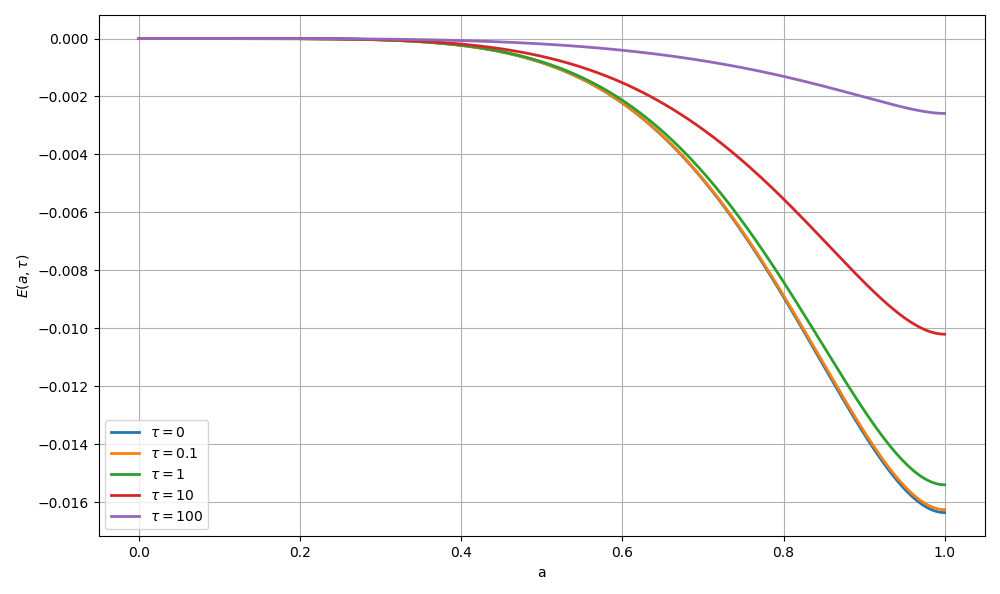}
        \caption{Dimension $d=2$}
        \label{fig:E_d2}
    \end{subfigure}
    
    \vspace{0.5cm}  

    \begin{subfigure}[t]{0.9\textwidth}
        \centering
        \includegraphics[width=\textwidth]{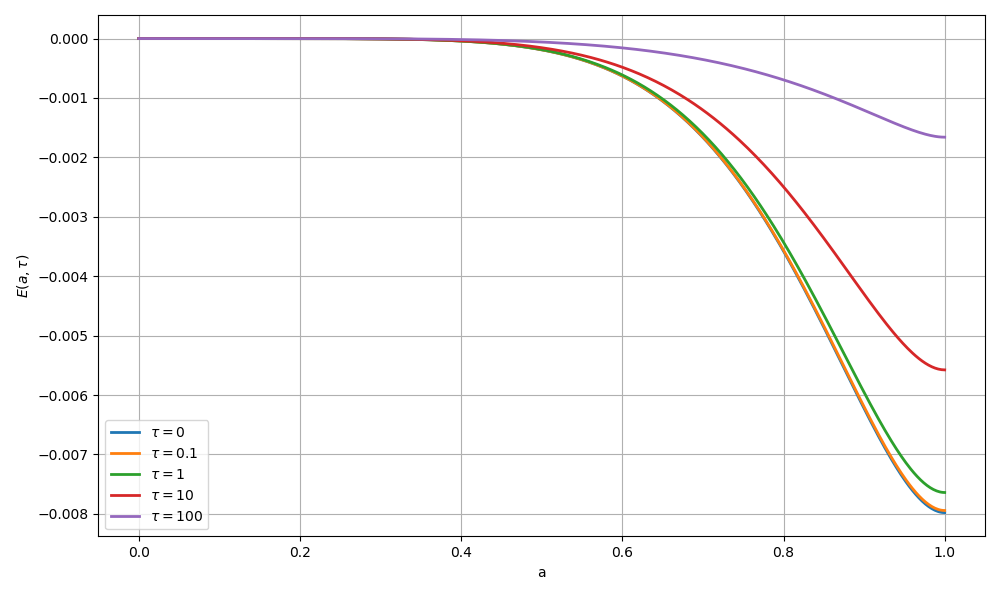}
        \caption{Dimension $d=3$}
        \label{fig:E_d3}
    \end{subfigure}

    \caption{Plot of the two-ball energy $E(a,\tau)$ given in~\eqref{eq:pb 2 boules torsion explicite, tout tau} with respect to $a$, for different values of $\tau$ and different dimensions $d$. As stated in Proposition \ref{prop:pb 2 boules torsion}, the curves are all decreasing.}
    \label{fig:pb2boules torsion}
\end{figure}

The proof is essentially based on tedious yet meticulous computations that we gathered in Appendix~\ref{annexe:resolution pb 2 boules torsion}. As a result, combining Proposition~\ref{prop:derivation pb 2 boules}, Proposition~\ref{prop:derivation pb 2 boule complet} and Proposition~\ref{prop:pb 2 boules torsion}, we finally end up with the proof of Theorem~\ref{thmsv}.

\subsection{Proof of Theorem~\ref{thmsv}}
\label{subsec:saint-venant}

By Proposition~\ref{prop:derivation pb 2 boules}, we have
$$
-T(\Omega,\tau)\geq E_{inc}(a,b,\tau),
$$
with balls $B_a$ and $B_b$ such that $|B_a|+|B_b|=|\Omega|$. The last point follows from the fact that the torsion function over $\Omega$ does not vanish over a set of positive measure \cite{mityagin} by analyticity \cite[Chapter V, Section 4, Theorem 1]{dautray-lions}. But by Proposition \ref{prop:pb 1 boule}, we have
$$
E_{inc}(a,b,\tau)\geq\min(E(a,b,\tau),-T(B_a,\tau)).
$$
Note that $-T(B_a,\tau)\geq -T(\Omega^*,\tau)$, since any test function over $B_a$ can be used as a test function over $\Omega^*$ in \eqref{eq:caracterisation variationnelle torsion}. On the other hand, by Proposition \ref{prop:pb 2 boules torsion}, we also obtain $E(a,b,\tau)\geq -T(\Omega^*,\tau)$. Observe that, in order to use Proposition \ref{prop:pb 2 boules torsion}, one possibly needs to rescale the problem to have $|\Omega|=|B_1|$, and this changes the parameter $\tau$. But this is not an issue insofar as that proposition holds for any $\tau\geq0$. As a result, we obtain as desired
$$
-T(\Omega,\tau)\geq -T(\Omega^*,\tau).
$$

\subsection{Torsion function in the ball\label{torsionball}}\label{subsec:torsion}

To complement the Saint--Venant-type result of Theorem~\ref{thmsv}, we will now provide an explicit expression for the torsion function and the torsional rigidity over balls. Let us point out that here we do not restrict to the case of a non-negative tension parameter. In this setting, the existence and the uniqueness of the torsion function is subject to the buckling problem. Recall that, as shown in \cite{coster-nicaise-troestler15}, the buckling eigenvalues $\Lambda$ and the associated eigenfunctions $u$ over the ball of radius $R$ are of the form
\begin{equation}\label{buckleig}
\Lambda = \fr{j_{\nu+\kappa+1,i}^2}{R^2},\qquad u(r,\theta) = \left[ r^{-\nu}J_{\nu+\kappa}(\sqrt{\Lambda}r) - R^{-\nu-\kappa}J_{\nu+\kappa}(\sqrt{\Lambda}R)r^{\kappa}\right]
S_{\kappa}(\theta),
\end{equation}
for $\kappa\in\N$ and $i\in\N^*$, where $S_\kappa$ denotes the spherical harmonic of order $\kappa$.

\begin{theoreme}\label{thm:torsion boule} When $B\subset \R^{d}$ $(d\geq 2)$ is a ball of radius $R$, the torsion function~\eqref{torsion-eq} exists
and is unique if and only if $-\tau$ is not an eigenvalue of the buckling problem~\eqref{eq:pb vp buckling}. In this case, the solution is radial and may be written explicitly as
\begin{equation}\label{eq:torsion boule}
w(r) =
\left\{
\begin{array}{ll}
\fr{R^2 - r^2}{2  d\tau} + \fr{R}{d \tau^{3/2}}
\left[\fr{I_{\nu} \left(\sqrt{\tau} r \right)}{I_{\nu+1} \left(\sqrt{\tau} R \right)}\left(\fr{r}{R}\right)^{-\nu} -
\fr{I_{\nu}\left(\sqrt {\tau}  R \right)}{I_{\nu+1}\left(\sqrt {\tau}  R \right)} \right], & \tau \neq 0\eqskip
\fr{\left(R^2-r^2\right)^{2}}{8d(d+2)}, & \tau = 0.
\end{array}
\right.
\end{equation}
If $-\tau$ is a buckling eigenvalue associated with a radial eigenfunction, then~\eqref{torsion-eq} has
no solution while if a corresponding eigenfunction $v$ is not radial, then there exists an infinite number of
solutions of the form
\[
w(r,\theta) = \frac{R^2 - r^2}{2  d\tau} + \fr{R}{d \tau^{3/2}}
\left[\fr{I_{\nu} \left(\sqrt{\tau} r \right)}{I_{\nu+1} \left(\sqrt{\tau} R \right)}\left(\fr{r}{R}\right)^{-\nu} -
\fr{I_{\nu}\left(\sqrt {\tau}  R \right)}{I_{\nu+1}\left(\sqrt {\tau}  R \right)} \right] +  C v(r,\theta),\qquad C\in\R.
\]
\end{theoreme}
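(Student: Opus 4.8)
The plan is to reduce \eqref{torsion-eq} over the ball to a radial ODE, solve that ODE explicitly, and then read off existence, uniqueness, the closed form \eqref{eq:torsion boule}, and the behaviour in the degenerate cases from the resulting elementary linear algebra together with the Fredholm alternative. First I would use the factorisation $S_\tau=\Delta(\Delta-\tau)$, so that the radial profile of any solution satisfies $L(L-\tau)w=1$ on $(0,R)$, where $L=\partial_r^2+\tfrac{d-1}{r}\partial_r$ is the radial Laplacian. A particular solution is the quadratic polynomial $w_p(r)=-\tfrac{r^2}{2d\tau}-\tfrac1{\tau^2}$ (obtained by solving $Lh=1$ with $h=r^2/(2d)$, then $(L-\tau)w_p=h$), while the radial solutions of the homogeneous equation $L(L-\tau)w=0$ that are regular at the origin are the constants and $r^{-\nu}I_\nu(\sqrt\tau r)$ — the remaining two independent homogeneous solutions, involving $r^{2-d}$ (or $\log r$) and $r^{-\nu}K_\nu(\sqrt\tau r)$, being singular at $0$. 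Hence every radial solution smooth at the origin has the form $w=w_p+A+B\,r^{-\nu}I_\nu(\sqrt\tau r)$.

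Next I would impose $w(R)=w'(R)=0$. Using $\tfrac{d}{dr}\bigl[r^{-\nu}I_\nu(\sqrt\tau r)\bigr]=\sqrt\tau\,r^{-\nu}I_{\nu+1}(\sqrt\tau r)$, the Neumann condition gives $B=\tfrac{R^{\nu+1}}{d\tau^{3/2}I_{\nu+1}(\sqrt\tau R)}$ and the Dirichlet condition then fixes $A$; collecting terms reproduces exactly \eqref{eq:torsion boule}. The decisive observation is that this $2\times2$ system is solvable if and only if $I_{\nu+1}(\sqrt\tau R)\neq0$. For $\tau>0$ this always holds, since $I_{\nu+1}$ is positive on the positive real axis; for $\tau<0$, $I_{\nu+1}(\sqrt\tau R)$ is equal, up to a nonzero constant, to $J_{\nu+1}(\sqrt{-\tau}\,R)$, which vanishes precisely when $\sqrt{-\tau}\,R=j_{\nu+1,i}$ for some $i$, i.e. precisely when $-\tau$ is a buckling eigenvalue \eqref{buckleig} of the form $\kappa=0$ — which is exactly the case in which the buckling eigenspace for $-\tau$ contains a radial eigenfunction. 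The case $\tau=0$ is handled separately: $L^2w=1$ with $w(R)=w'(R)=0$ is solved directly, yielding $(R^2-r^2)^2/(8d(d+2))$.

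For the dichotomy I would argue as follows. The difference of two solutions in $H_0^2(B)$ lies in $\ker S_\tau$, and $u\in\ker S_\tau$ means $\Delta^2u=\tau\Delta u=-(-\tau)\Delta u$, i.e. $u$ is a buckling eigenfunction for the eigenvalue $-\tau$; hence uniqueness holds iff $-\tau$ is not a buckling eigenvalue. If $-\tau$ is not a buckling eigenvalue, then $I_{\nu+1}(\sqrt\tau R)\neq0$ by the previous step, so \eqref{eq:torsion boule} provides a solution, it is the only one, and by rotational invariance of the problem it is automatically radial. If $-\tau$ is a buckling eigenvalue admitting a radial eigenfunction $u_0$ (the $\kappa=0$ case of \eqref{buckleig}, so $\sqrt{-\tau}\,R=j_{\nu+1,i}$), I would rule out existence through the Fredholm compatibility condition: testing the weak form of \eqref{torsion-eq} with $u_0$ and the weak buckling equation $\int_B\Delta u_0\,\Delta\psi=-\tau\int_B\nabla u_0\cdot\nabla\psi$ with $\psi=w$ gives $\int_B u_0=\int_B\Delta w\,\Delta u_0+\tau\int_B\nabla w\cdot\nabla u_0=0$; but a direct computation, using $d-1=2\nu+1$ and the identity $\int_0^t s^{\nu+1}J_\nu(s)\,ds=t^{\nu+1}J_{\nu+1}(t)$ together with $J_{\nu+1}(j_{\nu+1,i})=0$, shows $\int_B u_0$ is proportional to $J_\nu(j_{\nu+1,i})$, which is nonzero because $J_\nu$ and $J_{\nu+1}$ share no positive zero (a common zero would force $J_\nu$ and $J_\nu'$ to vanish together via the Bessel recurrences, which is impossible) — a contradiction. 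Finally, if $-\tau$ is a buckling eigenvalue but admits no radial eigenfunction, then $\sqrt{-\tau}\,R$ is not a zero of $J_{\nu+1}$, so $I_{\nu+1}(\sqrt\tau R)\neq0$ and \eqref{eq:torsion boule} still defines a solution $w\in H_0^2(B)$; since every buckling eigenfunction $v$ for $-\tau$ has spherical part of positive degree, $\int_B v=0$, so each $w+Cv$, $C\in\R$, is again a solution of \eqref{torsion-eq}, giving infinitely many.

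The genuinely delicate parts are the bookkeeping in the boundary step — recognising that, after eliminating $A$ and $B$, the obstruction to solvability is exactly $I_{\nu+1}(\sqrt\tau R)$, and that the surviving expression coincides precisely with \eqref{eq:torsion boule} — together with the Bessel non-vanishing facts used in the degenerate analysis (that $J_\nu$ does not vanish at the zeros of $J_{\nu+1}$), which is what cleanly separates the ``no solution'' case from the ``infinitely many solutions'' case. Everything else is routine ODE theory and the Fredholm alternative, the latter already invoked for general $\Omega$ in Section~\ref{sec:preliminaires}.
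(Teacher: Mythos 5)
Your proposal is correct and follows essentially the same route as the paper: build the radial solution explicitly from a particular solution plus the homogeneous solutions regular at the origin (constants and $r^{-\nu}I_\nu(\sqrt{\tau}r)$), fit the clamped boundary conditions, and then settle the degenerate cases via the Fredholm alternative by showing that $\int_B v$ vanishes exactly for the non-radial buckling eigenfunctions, using that zeros of Bessel functions of different orders do not coincide. The only differences are cosmetic — your particular solution differs from the paper's $(R^2-r^2)/(2d\tau)$ by constants that get absorbed, and your non-vanishing criterion $J_\nu(j_{\nu+1,i})\neq 0$ is equivalent to the paper's $J_{\nu+2}(j_{\nu+1,i})\neq 0$ via the three-term recurrence.
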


\begin{remarque}
When $\tau$ is negative, one should understand $\sqrt{\tau}$ as $i\sqrt{-\tau}$. As a result, one can replace the Bessel functions of type $I$ by Bessel functions of type $J$ and $\tau$ by $-\tau$ in formula~\eqref{eq:torsion boule}.
\end{remarque}

\begin{remarque}
 We note that the (pointwise) limit of the solution for nonzero $\tau$ as $\tau$ goes to zero is the polynomial
 solution for $\tau$ equal to zero.
\end{remarque}

\begin{remarque}
One can check that the cases where $-\tau$ is associated with radial buckling eigenfunctions and the cases where $-\tau$ is associated with non-radial buckling eigenfunctions are mutually exclusive. This is because $\{j_{\nu+1,i}\}_{i\in\N^*}$ is disjoint from $\{j_{\nu+\kappa+1,i}\}_{i\in\N^*}$ for $\kappa>0$.
\end{remarque}

\begin{remarque}
For positive values of $\tau$ the torsional rigidity of the ball may be evaluated explicitly and is given by
\begin{equation}\label{eq:rigidité torsionnelle de la boule}
 T(B,\tau)=\dint_{B} w\ {\rm d}V = \fr{|B|R^d}{d \tau^2}\left[ d + \fr{R^2 \tau}{d+2}- R\sqrt{\tau} \fr{I_{\nu}\left(R\sqrt{\tau}\right)}{I_{\nu+1}\left(R\sqrt{\tau}\right)}\right].
\end{equation}
This expression also holds for negative values of $\tau$ such that $-\tau$ is not an eigenvalue of the buckling
problem~\eqref{buckleig}, and (thanks to~\eqref{eq:expansion ratio Bessel} in appendix) the limit value when $\tau$ converges to zero coincides with the value of the
torsional rigidity at that point, namely
\begin{equation}\label{eq:rigidité torsionnelle de la boule tau=0}
 T(B,0) = \fr{|B|R^4}{d(d+4)(d+2)^2}.
\end{equation}
Lastly, when $-\tau$ is a buckling eigenvalue associated with a non-radial eigenfunction, the torsion functions all have the same integral, which is given by~\eqref{eq:rigidité torsionnelle de la boule}, hence the torsional rigidity can be extended to these values of $\tau$. Whereas when $-\tau$ is a buckling eigenvalue associated with a radial eigenfunction (that is $-\tau=j_{\nu+1,i}^2/R^2$, for some $i\in\N^*$), expression \eqref{eq:rigidité torsionnelle de la boule} diverges, which is compatible with the fact that no torsion function exists.
\end{remarque}

\begin{remarque}\label{rmq:positivité torsion}
From its expression, it is unclear wether the torsion function $w$ remains positive or not. For $\tau\geq-j_{\nu+1}^2/R^2$, this holds by constructing a competitor in the same way as in the proof of Lemma~\ref{lemme:un seul zero} (see also \cite{laurençot-walker} for $\tau\geq0$). But this cannot be expected for all values of $\tau$, as the integral of $w$ goes to $-\infty$ when $\tau$ approaches $-j_{\nu+1,i}^2/R^2$ from below, for any $i\in\N^*$.
\end{remarque}

\begin{proof}
We note that the first term in $w$, namely $w_{0}(r)=(R^2 - r^2)/(2  d\tau)$ is the torsion function for
the operator $-\tau \Delta $ that is, it satisfies the equation $-\tau \Delta w_0 = 1$ and vanishes on the
boundary of $B$. Since $\Delta^{2}w_{0}=0$, we see that it is also a solution of
$\left(\Delta^{2}-\tau\Delta \right)u=1$, except that it does not satisfy the boundary condition on the derivative.
However, we may add to this function any other radial function which is zero on the boundary as long as it satisfies
the equation $\left(\Delta^{2}-\tau\Delta \right)u=0$ in $B$, namely, any multiple of
\[
 r^{-\nu}  I_{\nu} \left(\sqrt{\tau} r \right) -R^{-\nu}
I_{\nu}\left(\sqrt {\tau}  R \right).
\]
When $-\tau$ is not an buckling eigenvalue and is not zero, by equating the normal derivative of the resulting function on the boundary to zero, we obtain the first expression
for the function $w$ above. In the case where $\tau$ vanishes, it is easy to integrate the equation $(r^{1-d}\partial_rr^{d-1}\partial_r)^2 w=1$ to see that the given polynomial is
the desired solution (see also \cite{bennett}). Since when $-\tau$ is not an eigenvalue of the buckling problem, the solution of~\eqref{torsion-eq}
is unique, the result follows in this case.

If $-\tau$ is an eigenvalue of the buckling problem, by the Fredholm alternative there are two possible situations:
either the corresponding eigenfunctions are not orthogonal (in the sense of $L^{2}(B)$) to the contant function,
in which case there is no solution to equation~\eqref{torsion-eq}, or they do have zero average and solutions exist
but they are not unique.

In the case of the radial eigenfunctions given by~\eqref{buckleig} with $\kappa=0$ we have
\[
\begin{array}{lll}
 \dint_{B_{R}} v(r) \ {\rm d}x & = & \left|\mathbb{S}^{d-1}\right| \dint_{0}^{R} r^{d-1} v(r)  {\rm d}r\eqskip
 & = & \left|\mathbb{S}^{d-1}\right| \dint_{0}^{R} r^{d-1}\left[
 r^{1-d/2}J_{d/2-1}(\sqrt{-\tau}r) - R^{1-d/2}J_{d/2-1}(\sqrt{-\tau}R)\right] {\rm d}r\eqskip
 & = & \left|\mathbb{S}^{d-1}\right|\fr{R^{1+d/2}}{d} J_{d/2+1}\left(j_{d/2,i}\right).
\end{array}
\]
Since the zeros of Bessel functions of different orders never coincide, the above quantity is never zero and
there are no solutions of~\eqref{torsion-eq} in this case.

When $v$ has a nontrivial angular part, and since all spherical harmonics with positive index $\kappa$ are orthogonal to the constant
function on $\mathbb{S}^{d-1}$, the integral over the sphere vanishes, and the corresponding solutions
have zero average. As a consequence, solutions of~\eqref{torsion-eq} are not unique. Furthermore, a particular solution of~\eqref{torsion-eq} is given by the first line of~\eqref{eq:torsion boule}, hence all the solutions are the sum of this particular solution and of a solution of the homogeneous equation, which is exactly the buckling equation~\eqref{eq:pb vp buckling} with $\Lambda=-\tau$. Therefore all the solutions will have the form given
by $w(r,\theta)$ above.
\end{proof}


\begin{appendix}
\section{Eigenfunction of a clamped ball}\label{annexe:fonction propre boule}

We recall that the first normalised eigenfunction of the Dirichlet bilaplacian $S_0$ over a ball $B$ of radius $R$ is given by (see \cite[Proposition 15]{leylekian1})
$$
u_{B,0}(r)=\pm\frac{1}{\sqrt{d|B|}}\left[\frac{J_{\nu+1}(\gamma_\nu r/R)}{J_\nu(\gamma_\nu)}-\frac{I_{\nu+1}(\gamma_\nu r/R)}{I_\nu(\gamma_\nu)}\right]\left(\frac{r}{R}\right)^{-\nu},
$$
where $\nu=d/2-1$, $J_\nu$ and $I_\nu$ stand for the Bessel and modified Bessel functions of order $\nu$, and $\gamma_\nu$ is the first positive zero of $f_\nu$ defined by
$$
f_\nu(r)=\left[\frac{J_{\nu+1}}{J_\nu}(r)+\frac{I_{\nu+1}}{I_\nu}(r)\right]r^{d-1}.
$$
In this appendix, we compute the $L^2$ norm of the gradient of $u_{B,0}$, as this plays a role in Section~\ref{sec:behaviour wrt tension}.

\begin{lemme}\label{lemme:norme L2 grad u0}
Over the ball $B$ of radius $R$, the first $L^2$-normalised eigenfunction $u_{B,0}$ of $S_0$ satisfies
$$
\int_B|\nabla u_{B,0}|^2= \frac{\gamma_\nu^2}{R^2}\frac{|J_{\nu+1}(\gamma_\nu)J_{\nu-1}(\gamma_\nu)|}{J_\nu(\gamma_\nu)^2}.
$$
\end{lemme}

\begin{proof}
We have (see \cite[Appendix]{leylekian1})
$$
\partial_ru_{B,0}(r)=\frac{k}{\sqrt{d|B|}}\left[\frac{J_{\nu+1}(k r)}{J_\nu(\gamma_\nu)}+\frac{I_{\nu+1}(k r)}{I_\nu(\gamma_\nu)}\right]\left(\frac{r}{R}\right)^{-\nu},
$$
where $k:=\gamma_\nu/R$. As a result,

\begin{align}\label{eq:norme L2 derivee radiale}
\int_B (\partial_ru_{B,0})^2=\frac{k^2|\mathbb{S}^{d-1}|R^{2\nu}}{d|B|}&\left[J_\nu(\gamma_\nu)^{-2}\int_0^RJ_{\nu+1}(k r)^2r^{d-2\nu-1}dr\right.\nonumber\\
&\left.+I_\nu(\gamma_\nu)^{-2}\int_0^RI_{\nu+1}(k r)^2r^{d-2\nu-1}dr\right.\\
&\left.+2J_\nu(\gamma_\nu)^{-1}I_\nu(\gamma_\nu)^{-1}\int_0^RI_{\nu+1}(kr)J_{\nu+1}(kr)r^{d-2\nu-1}dr\right].\nonumber
\end{align}
But recall that $d-2\nu-1=1$ and that for all $\alpha\neq\beta$ in $\C$ and $\mu>-1$, by \cite[§6.521, formula~1]{gradshteyn-ryzhik}
\begin{equation*}
\int_0^1xJ_{\mu}(\alpha x)J_\mu(\beta x)dx=\frac{\beta J_{\mu-1}(\beta)J_\mu(\alpha)-\alpha J_{\mu-1}(\alpha)J_\mu(\beta)}{\alpha^2-\beta^2}.
\end{equation*}
Applying it to $\alpha=\gamma_\nu$ and $\beta=i\gamma_\nu$, this shows that
$$
\int_0^RI_{\nu+1}(kr)J_{\nu+1}(kr)r^{d-2\nu-1}dr=(R^2/2\gamma_\nu)[J_{\nu+1}(\gamma_\nu)I_\nu(\gamma_\nu)-I_{\nu+1}(\gamma_\nu)J_{\nu}(\gamma_\nu)]
$$
Then, by \cite[equation (23)]{leylekian1} (note that there is a mistake in this formula: a coefficient $2$ is missing in front of the cross product), for all $\beta\in\R$
\begin{equation}\label{eq:integrale Jmu^2}
\int_0^1 xJ_\mu(\beta x)^2dx=\frac{1}{2}\left[J_{\mu}(\beta)^2+J_{\mu-1}(\beta)^2-\frac{2\mu}{\beta} J_{\mu}(\beta)J_{\mu-1}(\beta)\right].
\end{equation}
Due to the isolation of zeros of holomorphic functions, the formula actually holds for any $\beta\in\C$. Thus we apply it to $\beta=\gamma_\nu$ and $\beta=i\gamma_\nu$, and find
$$
\int_0^RJ_{\nu+1}(k_\nu r)^2r^{d-2\nu-1}=\frac{R^2}{2}\left[J_{\nu+1}(\gamma_\nu)^2+J_\nu(\gamma_\nu)^2-\frac{2(\nu+1)}{\gamma_\nu} J_{\nu+1}(\gamma_\nu)J_\nu(\gamma_\nu)\right];
$$
$$
\int_0^RI_{\nu+1}(k_\nu r)^2r^{d-2\nu-1}=\frac{R^2}{2}\left[I_{\nu+1}(\gamma_\nu)^2-I_\nu(\gamma_\nu)^2+\frac{2(\nu+1)}{\gamma_\nu} I_{\nu+1}(\gamma_\nu)I_\nu(\gamma_\nu)\right].
$$
Consequently, and since $|\mathbb{S}^{d-1}|R^{2(\nu+1)}=d|B|$, we obtain
\begin{alignat*}{3}
\int_B (\partial_ru_{B,0})^2=&k^2\frac{|\mathbb{S}^{d-1}|R^{2(\nu+1)}}{2d|B|}&&\left[\frac{J_{\nu+1}(\gamma_\nu)^2}{J_\nu(\gamma_\nu)^{2}}+1-\frac{2(\nu+1)}{\gamma_\nu}\frac{J_{\nu+1}(\gamma_\nu)}{J_\nu(\gamma_\nu)}\right.\\
& &&\frac{I_{\nu+1}(\gamma_\nu)^2}{I_\nu(\gamma_\nu)^{2}}-1+\frac{2(\nu+1)}{\gamma_\nu}\frac{I_{\nu+1}(\gamma_\nu)}{I_\nu(\gamma_\nu)}\\
& &&\left.+\frac{2}{\gamma_\nu}\left(\frac{J_{\nu+1}(\gamma_\nu)}{J_\nu(\gamma_\nu)}-\frac{I_{\nu+1}(\gamma_\nu)}{I_\nu(\gamma_\nu)}\right)\right]\\
=& \mathrlap{k^2\left[\frac{J_{\nu+1}(\gamma_\nu)^2}{J_\nu(\gamma_\nu)^2}-\frac{2\nu}{\gamma_\nu}\frac{J_{\nu+1}(\gamma_\nu)}{J_\nu(\gamma_\nu)}\right]}.
\end{alignat*}
The last simplification comes from the fact that $f_\nu(\gamma_\nu)=0$. This is the desired formula, by virtue of the recurrence relation $J_{\nu+1}(\gamma_\nu)-2\nu J_\nu(\gamma_\nu)/\gamma_\nu=-J_{\nu-1}(\gamma_\nu)$.
\end{proof}

\section{Shape derivatives}\label{annexe:derivee de forme}

\begin{lemme}\label{lemme:derivee de forme}
Let $\Omega$ be a $C^4$ regular bounded open subset of $\R^d$ and $\tau\in\R$. Let $\Gamma$ be a simple eigenvalue of problem \eqref{eq:pb vp} and $u$ be an associated $L^2$-normalised eigenfunction. Then, for $V\in W^{2,\infty}(\R^d,\R^d)$ in some neighbourhood of $0$, there exists an eigenvalue $\gamma[V]$ with associated $L^2$-normalised eigenfunction $u[V]$ for problem \eqref{eq:pb vp} over $(id+V)\Omega$, such that $\gamma[0]=\Gamma$ and $u[0]=u$. Furthermore, $V\mapsto \gamma[V]$ and $V\mapsto u[V]\in H^1(\R^d)$ are $C^1$ in a neighbourhood of $0$. Lastly, the derivative $u'$ of $u[\cdot]$ at $0$ in the direction of a given $V$ satisfies $u'\in H_0^1\cap H^2(\Omega)$ and
$$
\partial_nu'=-\partial_n^2 u V\cdot\vec{n}.
$$
\end{lemme}

\begin{proof}
Due to the regularity assumptions, and since $\Gamma$ is simple, this can be performed thanks to the implicit function Theorem, in the wake of \cite[section~5.7]{henrot-pierre}. Indeed, define the operator
$$
\mathcal{F}:\left(
\begin{array}{ccc}
W^{2,\infty}\times H_0^2(\Omega)\times\R & \rightarrow & H^{-2}(\Omega)\times\R \\
V,v,\gamma & \mapsto &  P_V(\Delta^2-\tau\Delta-\gamma)P_V^{-1}v;\int_{\Omega}v^2Jac(id+V)
\end{array}
\right),
$$
where $P_V$ is the operation of precomposition by $(id+V)$, that is $P_V\varphi:=\varphi\circ(id+V)$. The operator $\mathcal{F}$ has the property that $\mathcal{F}(V,u\circ(id+V),\gamma)=(0,1)$ if and only if $u$ is an $L^2$-normalised eigenfunction associated with $\gamma$ over the set $(id+V)\Omega$. Moreover, at least for small $V$, the operator is smooth, and its differential with respect to $(v,\gamma)$ at the point $(0,u,\Gamma)$ is given by
$$
D\mathcal{F}(0,u,\Gamma)(v,\gamma)=\left((\Delta^2-\tau\Delta-\Gamma)v-\gamma u;2\int_\Omega uv\right).
$$
Since $\Gamma$ is assumed simple, one can prove, using Fredholm's Theorem, that the differential is an isomorphism (see \cite[Lemme 5.7.3]{henrot-pierre} for details). As a consequence, the implicit function Theorem provides smooth functions $v[V]$ and $\gamma[V]$ defined in a neighbourhood of $V=0$ such that $\mathcal{F}(V,v[V],\gamma[V])=(0,1)$. In other words, $u[V]:=P_V^{-1}v[V]$ is an $L^2$-normalised eigenfunction associated with $\gamma[V]$ over $(id+V)\Omega$. Observe that, by simplicity of $\Gamma$, we necessarily have $\gamma[0]=\Gamma$, and (up to a sign change) $u[0]=u$, for small enough $V$.

Now recall that precomposing $v[V]$ by $(id+V)^{-1}$ makes the smoothness of $u[V]$ occur in a space with reduced regularity (see \cite[Lemme 5.3.3 and Lemme 5.3.9]{henrot-pierre}). More precisely, it is the map $V\mapsto u[V]\in H^1(\R^d)$ which is $C^1$. Its derivative at $0$ in the direction of some given vector field $V$ satisfies, by the chain rule,
$$
\partial_Vu[0]+\nabla u[0]\cdot V=\partial_Vv[0]\in H_0^2(\Omega).
$$
In particular, since $\Omega$ is $C^4$, $u=u[0]$ is in $H^4(\Omega)$, hence $u':=\partial_Vu[0]$ is actually in $H^2(\Omega)$. Moreover we have
$$u'=0\text{ on }\partial\Omega,\qquad \partial_n u'=-\partial_n(V\cdot\nabla u)=-\partial_n^2uV\cdot\vec{n}\text{ on }\partial\Omega.
$$
\end{proof}

\section{The torsional two-ball problem}\label{annexe:resolution pb 2 boules torsion}

In this section, we prove Proposition~\ref{prop:pb 2 boules torsion}, which amounts to showing that the two-ball problem~\eqref{eq:pb 2 boules complet} decreases as the $B_a$ ball grows. The proof
is inspired by that of Theorem~2.3 in \cite{ashbaugh-bucur-laugesen-leylekian} and from notes of D. Bucur \cite{bucur2024}. Let $(v,w)$ be a minimising pair for $E(a,b,\tau)$ defined by~\eqref{eq:pb 2 boules complet}. Observe that $v$ satisfies the Euler-Lagrange equation
$$
\left\{
\begin{array}{rcll}
\Delta^2 v-\tau\Delta v & = & 1 &\text{in }B_a,\\
v & = & 0 &\text{on }\partial B_a,\\
\partial_n v & = & t &\text{on }\partial B_a,
\end{array}
\right.
$$
with some unknown parameter $t\in\R$. Define the functions $v_a$ and $h_a$ in the following way
\begin{equation}\label{eq:va et ha}
\left\{
\begin{array}{rcll}
\Delta^2 v_a-\tau\Delta v_a & = & 1 &\text{in }B_a,\\
v_a & = & 0 &\text{on }\partial B_a,\\
\partial_n v_a & = & 0 &\text{on }\partial B_a,
\end{array}
\right.
\qquad
\left\{
\begin{array}{rcll}
\Delta^2 h_a-\tau\Delta h_a & = & 0 &\text{in }B_a,\\
h_a & = & 0 &\text{on }\partial B_a,\\
\partial_n h_a & = & 1 &\text{on }\partial B_a.
\end{array}
\right.
\end{equation}
In this fashion, $v$ can be decomposed in the form $v=v_a+t h_a$. Moreover,
\[
\begin{array}{lll}
\dint_{B_a}(\Delta v)^2+\tau\dint_{B_a}|\nabla v|^2-2\dint_{B_a} v & = & \dint_{\partial B_a} \Delta v\partial_nv-\dint_{B_a} v\eqskip
& = & t^2\dint_{\partial B_a}\Delta h_a-2t\dint_{B_a}h_a-\dint_{B_a}v_a,
\end{array}
\]
where we used that $\int_{\partial B_a}\Delta v_a=-\int_{B_a}h_a$, which follows by combining the equations
satisfied by $h_{a}$ and $v_{a}$. A similar analysis on the ball $B_b$ yields
$$
\int_{B_b}(\Delta w)^2=s^2\int_{\partial B_b}\Delta h_b^0,
$$
where $s=\partial_nw$ is a parameter and $h_b^0$ satisfies
\begin{equation}\label{eq:vb}
\left\{
\begin{array}{rcll}
\Delta^2 h_b^0 & = & 0 &\text{in }B_b,\\
h_b^0 & = & 0 &\text{on }\partial B_b,\\
\partial_n h_b^0 & = & 1 &\text{on }\partial B_b.
\end{array}
\right.
\end{equation}
As a result, taking into account the constraint $t a^{d-1}=s b^{d-1}$, we obtain that
$$
E(a,b,\tau)=t^2\left[\int_{\partial B_a}\Delta h_a+\frac{a^{2d-2}}{b^{2d-2}}\int_{\partial B_b}\Delta h_b^0\right]-2t\int_{B_a}h_a-\int_{B_a}v_a,
$$
which is minimal for
$$
t=\frac{\int_{B_a}h_a}{\int_{\partial B_a}\Delta h_a+\frac{a^{2d-2}}{b^{2d-2}}\int_{\partial B_b}\Delta h_b^0}.
$$
Hence, we end up with the following expression for the two-ball energy:
\begin{equation}\label{eq:pb 2 boules torsion explicite partiel}
E(a,b,\tau)=-\frac{\left(\int_{B_a}h_a\right)^2}{\int_{\partial B_a}\Delta h_a+\frac{a^{2d-2}}{b^{2d-2}}\int_{\partial B_b}\Delta h_b^0}-\int_{B_a}v_a.
\end{equation}
At this point, observe that one may compute explicitly
\begin{equation}\label{eq:integrale laplacien hb0}
\int_{\partial B_b}\Delta h_b^0=d^2|B_1|b^{d-2},
\end{equation}
which simplifies the above expression. Now we need to compute the derivative of the energy with respect to $a\in (0,1)$, and subject to the constraint $a^d+b^d=1$. First, we compute the derivative of $\int_{B_a}v_a$. To do so, we resort to shape derivatives \cite[chapter 5]{henrot-pierre} to obtain
$$
\frac{d}{da}\int_{B_a}v_a=\int_{B_a}\dot{v}_a,
$$
where
$$
\left\{
\begin{array}{rcll}
\Delta^2 \dot{v}_a-\tau\Delta \dot{v}_a & = & 0 &\text{in }B_a,\\
\dot{v}_a & = & 0 &\text{on }\partial B_a,\\
\partial_n \dot{v}_a & = & -\Delta v_a &\text{on }\partial B_a,
\end{array}
\right.
$$
Multiplying by $v_a$ and integrating, one finds
\begin{equation}\label{eq:derivee va}
\frac{d}{da}\int_{B_a}v_a=\int_{\partial B_a}(\Delta v_a)^2=\frac{1}{|\partial B_a|}\left(\int_{\partial B_a} \Delta v_a\right)^2=\frac{1}{|\partial B_a|}\left(\int_{B_a} h_a\right)^2.
\end{equation}
Now it remains to compute the derivatives of $\int_{B_a}h_a$ and $\int_{\partial B_a}\Delta h_a$. Actually, note that the former can be expressed in terms of the latter. Indeed, since $\Delta h_a-\tau h_a$ is harmonic and constant on the boundary $\partial B_a$, this function is actually constant over the whole ball $B_a$. In other words, there exists $c\in\R$ such that $\Delta h_a-\tau h_a=c$. Integrating over $B_a$ on the one hand, and over $\partial B_a$ on the other hand, we obtain $|\partial B_a|-\tau\int_{B_a}h_a=c|B_a|$ and $\int_{\partial B_a} \Delta h_a=c|\partial B_a|$. Consequently,
\begin{equation}\label{eq:relation ha et laplacien ha}
\int_{B_a}h_a=\frac{a}{\tau d}\left(d^2a^{d-2}|B_1|-\int_{\partial B_a}\Delta h_a\right),
\end{equation}
and the energy turns into
$$
E(a,\tau)=-\frac{a^2}{d^2\tau^2}\cdot\frac{\left(d^2a^{d-2}|B_1|-\int_{\partial B_a}\Delta h_a\right)^2}{\int_{\partial B_a}\Delta h_a+\frac{a^{2d-2}}{b^{d}}d^2|B_1|}-\int_{B_a}v_a.
$$
To compute the derivative of $\int_{\partial B_a}\Delta h_a$, we use that
$$
\int_{\partial B_a}\Delta h_a=\int_{B_a}(\Delta h_a)^2+\tau\int_{B_a}|\nabla h_a|^2=\min_{h}\left[\int_{B_a}(\Delta h)^2+\tau\int_{B_a}|\nabla h|^2\right],
$$
where the functions $h$ evolve in the space $H_0^1\cap H^2(B_a)$ and satisfy $\partial_n h=1$. Then, the change of variables $aH(x)=h(ax)$ shows that
$$
\int_{\partial B_a}\Delta h_a=\min_H \left[a^{d-2}\int_{B_1}(\Delta H)^2+\tau a^d\int_{B_1}|\nabla H|^2\right],
$$
where the minimum is now taken among functions $H\in H_0^1\cap H^2(B_1)$ with $\partial_n H=1$, and is attained by $H_a(x):=h_a(a x)/a$. This formulation allows to differentiate with respect to $a$:
\begin{equation}\label{eq:derivee laplacien ha bord}
\begin{array}{lll}
\fr{d}{da}\dint_{\partial B_a}\Delta h_a & = & \fr{d-2}{a}a^{d-2}\dint_{B_1}(\Delta H_a)^2+\fr{\tau d}{a}a^d\dint_{B_1}|\nabla H_a|^2\eqskip
& = & \fr{d-2}{a}\dint_{B_a}(\Delta h_a)^2+\fr{\tau d}{a}\dint_{B_a}|\nabla h_a|^2\eqskip
& = & \fr{d}{a}\dint_{\partial B_a}\Delta h_a-\fr{2}{a}\dint_{B_a}(\Delta h_a)^2.
\end{array}
\end{equation}

As a consequence of equation \eqref{eq:derivee va}, equation \eqref{eq:derivee laplacien ha bord}, and of $b^d=1-a^d$, one computes
\begin{alignat*}{3}
\partial_a E(a,\tau)= & \mathrlap{\left[-\frac{2a}{d^2\tau^2}\left(d^2a^{d-2}|B_1|-\int_{\partial B_a}\Delta h_a\right)^2
\left(\int_{\partial B_a}\Delta h_a+\frac{a^{2d-2}}{b^{d}}d^2|B_1|\right) \right.}\\
& -\frac{2a^2}{d^2\tau^2} && \left(d^2(d-2)a^{d-3}|B_1|-\frac{d}{a}\int_{\partial B_a}\Delta h_a+\frac{2}{a}\int_{B_a}(\Delta h_a)^2\right) \\
& && \left(d^2a^{d-2}|B_1|-\int_{\partial B_a}\Delta h_a\right)\left(\int_{\partial B_a}\Delta h_a+\frac{a^{2d-2}}{b^{d}}d^2|B_1|\right) \\
& +\frac{a^2}{d^2\tau^2} && \left(d^2a^{d-2}|B_1|-\int_{\partial B_a}\Delta h_a\right)^2 \\
& &&\left(\frac{d}{a}\int_{\partial B_a}\Delta h_a-\frac{2}{a}\int_{B_a}(\Delta h_a)^2+2d^2(d-1)|B_1|\frac{a^{2d-3}}{b^d}+d^3|B_1|\frac{a^{3d-3}}{b^{2d}}\right) \\
& \mathrlap{\left. -\frac{1}{|\partial B_a|}\left(\int_{B_a}h_a\right)^2\left(\int_{\partial B_a}\Delta h_a+\frac{a^{2d-2}}{b^{d}}d^2|B_1|\right)^2\right]}\\
& \mathrlap{\hspace{5mm}\times\left[\int_{\partial B_a}\Delta h_a+\frac{a^{2d-2}}{b^{d}}d^2|B_1|\right]^{-2}}\\
= & \mathrlap{\left[-\frac{2}{a}\left(\int_{B_a}h_a\right)^2
\left(-\frac{\tau d}{a}\int_{B_a}h_a+a^{d-2}(1+a^d/b^d)d^2|B_1|\right) \right.}\\
& -\frac{2a}{\tau d} && \left(\frac{\tau d^2}{a^2}\int_{B_a}h_a+\frac{2\tau^2}{a}\int_{B_a}h_a^2-\frac{2\tau^2}{a^{d+1}|B_1|}\left(\int_{B_a}h_a\right)^2\right)
\left(\int_{B_a}h_a\right) \\
& && \left(-\frac{\tau d}{a}\int_{B_a}h_a+a^{d-2}(1+a^d/b^d)d^2|B_1|\right) \\
& \mathrlap{+\left(\int_{B_a}h_a\right)^2\left(-\frac{\tau d^2}{a^2}\int_{B_a}h_a-\frac{2\tau^2}{a}\int_{B_a}h_a^2+\frac{2\tau^2}{a^{d+1}|B_1|}\left(\int_{B_a}h_a\right)^2 \right.}\\
& && \left.+d^2|B_1|a^{d-3}(d\left(1+a^{d}/b^{d}\right)^2-2\left(1+a^d/b^d\right))\right) \\
& \mathrlap{\left. -\frac{1}{|\partial B_a|}\left(\int_{B_a} h_a\right)^2\left(-\frac{\tau d}{a}\int_{B_a}h_a+a^{d-2}(1+a^d/b^d)d^2|B_1|\right)^2\right]}\\
&\mathrlap{\hspace{5mm}\times\left[\int_{\partial B_a}\Delta h_a+\frac{a^{2d-2}}{b^{d}}d^2|B_1|\right]^{-2}}
\end{alignat*}
Observe that, to obtain the last equality,  we eliminated $\Delta h_a$ using relation~\eqref{eq:relation ha et laplacien ha} and
$$
\int_{B_a}(\Delta h_a)^2=d^2a^{d-2}|B_1|+\tau^2\int_{B_a}h_a^2-\frac{\tau^2}{a^d|B_1|}\left(\int_{B_a}h_a\right)^2,
$$
which comes after squaring and integrating the identity $\Delta h_a=c+\tau h_a$ over $B_a$, where $c|B_a|=|\partial B_a|-\tau\int_{B_a}h_a$. After developing and simplifying the previous expression, we find
\begin{align}\label{eq:derivee pb 2 boules}
\partial_a E(a,\tau)= & \left[\frac{\tau d(d+2)}{a^2}(1+2(1+a^d/b^d))\left(\int_{B_a}h_a\right)^3-2(d+2)d^2|B_1|a^{d-3}(1+a^d/b^d)\left(\int_{B_a} h_a\right)^2\right.\nonumber\\
& +\frac{2\tau^2}{a}\int_{B_a}h_a^2\left(\int_{B_a}h_a\right)^2-4\tau d|B_1|a^{d-2}(1+a^d/b^d)\int_{B_a}h_a\int_{B_a}h_a^2 \\
& \left.-\frac{(d+2)\tau^2}{a^{d+1}|B_1|}\left(\int_{B_a}h_a\right)^4\right] \times\left[\int_{\partial B_a}\Delta h_a+\frac{a^{2d-2}}{b^{d}}d^2|B_1|\right]^{-2}\nonumber
\end{align}

To go further, we need to compute the quantities involving $h_a$. To that end, observe that it is possible to express explicitly $h_a$ in terms of Bessel functions. Let us focus for the moment on the case $\tau>0$. In this instance one has that
\begin{equation}\label{eq:ha}
h_a(r)=
\frac{1}{\sqrt{\tau}}\left[\frac{I_\nu(r\sqrt{\tau})}{I_{\nu+1}(a\sqrt{\tau})}\left(\frac{r}{a}\right)^{-\nu}-\frac{I_\nu(a\sqrt{\tau})}{I_{\nu+1}(a\sqrt{\tau})}\right]
\end{equation}
solves \eqref{eq:va et ha}. Then, using $d-1-\nu=\nu+1$ and the formula (see \cite[section 6.561, formula~7]{gradshteyn-ryzhik})
$$
\int_0^1x^{\nu+1}I_\nu(\alpha x)dx=\alpha^{-1}I_{\nu+1}(\alpha),
$$
we may compute explicitly
\begin{equation}\label{eq:integrale ha}
\int_{B_a}h_a=a^d\frac{|B_1|}{\sqrt{\tau}}\left(\frac{d}{a\sqrt{\tau}}-\frac{I_\nu(a\sqrt{\tau})}{I_{\nu+1}(a\sqrt{\tau})}\right).
\end{equation}
Similarly, recalling also (see e.g. \cite[equation (23)]{leylekian1}, but pay attention to the factor $2$ missing in front of the cross-product there) the formula, for all $\beta\in\C$,
$$
\int_0^1xI_{\nu}(\beta x)^2dx=\frac{1}{2}[-I_{\nu+1}(\beta)^2+I_\nu(\beta)^2-\frac{2\nu}{\beta}I_{\nu+1}(\beta)I_\nu(\beta)],
$$
we obtain
$$
\int_{B_a}h_a^2=a^d\frac{|B_1|}{2\tau}\left[(d+2)\frac{I_\nu(a\sqrt{\tau})}{I_{\nu+1}(a\sqrt{\tau})}\left(\frac{I_\nu(a\sqrt{\tau})}{I_{\nu+1}(a\sqrt{\tau})}-\frac{d}{a\sqrt{\tau}}\right)-d\right].
$$
Thanks to these expressions, we shall expand the terms in \eqref{eq:derivee pb 2 boules}. In the following equation, for readability, we do not write the argument $a\sqrt{\tau}$ in the Bessel functions.
\begin{align*}
\partial_a E(a,\tau)= & \left\{d(d+2)(1+2(1+a^d/b^d))a^2\left(\frac{d^2}{a^2\tau}+\frac{I_\nu^2}{I_{\nu+1}^2}-2\frac{d I_\nu}{a\sqrt{\tau}I_{\nu+1}}\right)\right.\nonumber\\
&-\frac{2d^2(d+2)}{\sqrt{\tau}}(1+a^d/b^d)a\left(\frac{d}{a\sqrt{\tau}}-\frac{I_\nu}{I_{\nu+1}}\right)\nonumber\\
& +\sqrt{\tau}a^3\left[(d+2)\frac{I_\nu}{I_{\nu+1}}\left(\frac{I_\nu}{I_{\nu+1}}-\frac{d}{a\sqrt{\tau}}\right)-d\right]\left(\frac{d}{a\sqrt{\tau}}-\frac{I_\nu}{I_{\nu+1}}\right)\\
& -2 da^2(1+a^d/b^d)\left[(d+2)\frac{I_\nu}{I_{\nu+1}}\left(\frac{I_\nu}{I_{\nu+1}}-\frac{d}{a\sqrt{\tau}}\right)-d\right] \\
& \left.-(d+2)\sqrt{\tau}a^3\left(\frac{d^3}{a^3\tau\sqrt{\tau}}-3\frac{d^2I_\nu}{a^2\tau I_{\nu+1}}+3\frac{dI_\nu^2}{a\sqrt{\tau}I_{\nu+1}^2}-\frac{I_\nu^3}{I_{\nu+1}^3}\right)\right\} \\
& \hspace{5mm}\times\left\{\int_{\partial B_a}\Delta h_a+\frac{a^{2d-2}}{b^{d}}d^2|B_1|\right\}^{-2}|B_1|^2a^{2d-4}\int_{B_a}h_a\nonumber\\
= & \left\{d^2a^2(1+2a^d/b^d)+d\sqrt{\tau}a^3\frac{I_\nu}{I_{\nu+1}}\right\}
 \left\{\int_{\partial B_a}\Delta h_a+\frac{a^{2d-2}}{b^{d}}d^2|B_1|\right\}^{-2}|B_1|^2a^{2d-4}\int_{B_a}h_a.\nonumber\\
\end{align*}

Since the Bessel functions $I_\nu$ and $I_{\nu+1}$ are non-negative, the sign of the derivative is given by the sign of $\int_{B_a}h_a$. But $h_a$ is negative (recall that $(I_{\nu}(r)r^{-\nu})'=I_{\nu+1}(r)r^{-\nu}\geq0$). Therefore, $E(\cdot,\tau)$ is decreasing over $(0,1)$ for all $\tau>0$. To conclude, notice that combining \eqref{eq:pb 2 boules torsion explicite partiel}, \eqref{eq:integrale laplacien hb0}, \eqref{eq:relation ha et laplacien ha}, \eqref{eq:integrale ha}, and \eqref{eq:rigidité torsionnelle de la boule}, we obtain the explicit expression \eqref{eq:pb 2 boules torsion explicite} for the two-ball energy.

For $\tau=0$, observing that $E(a,\tau)$ depends smoothly on $\tau\geq0$ for $a\in(0,1)$ (see for instance~\eqref{eq:pb 2 boules torsion explicite partiel}), it is enough to take the limit $\tau\to0$ in the above equation to see that $\partial_aE(a,0)$ is negative as well for $a\in(0,1)$. Lastly, taking $\tau\to0$ in~\eqref{eq:pb 2 boules torsion explicite} and using the asymptotic expansion
\begin{equation}\label{eq:expansion ratio Bessel}
x\fr{I_\nu(x)}{I_{\nu+1}(x)}\underset{x\to0}{=}d+\fr{x^2}{d+2}-\frac{x^4}{(d+2)^2(d+4)}+o(x^4)
\end{equation}
gives the expression~\eqref{eq:pb 2 boules torsion explicite tau=0}.
\end{appendix}

\begin{acknowledgements}
We would like to thank D. Bucur and M. S. Ashbaugh for the many discussions while writing this article. We also want to thank D. Buoso and R.S. Laugesen for their comments and suggestions on the manuscript. This work was partially supported by the Fundação para a Ciência e a Tecnologia, I.P. (Portugal), through project {\small UID/00208/2023}.
\end{acknowledgements}



\printbibliography



\end{document}